\documentclass[11pt,a4paper,reqno]{amsart}
\usepackage{amsmath,amssymb, amsbsy}
\usepackage{enumitem}
\usepackage{hyperref}
\usepackage{color,psfrag}
\usepackage[dvips]{graphicx}
\usepackage[arrowdel]{physics}
\usepackage{color}
\usepackage{bbm}
\usepackage{comment}
\usepackage{dsfont}
\setlength{\topmargin}{-.5cm}
\setlength{\textheight}{23cm}
\setlength{\evensidemargin}{0.0cm}
\setlength{\oddsidemargin}{01.2cm}

\setlength{\textwidth}{15.1cm}

\newcommand{\noi} {\noindent}
\theoremstyle{plain}
\newtheorem{thm}{Theorem}[section]
\theoremstyle{plain}
\newtheorem{lem}[thm]{Lemma}
\newtheorem{prop}[thm]{Proposition}
\newtheorem{cor}[thm]{Corollary}

\theoremstyle{definition}

\newtheorem{rem}{Remark}[section]

\newcommand{\De} {\Delta}
\newcommand{\la} {\lambda}
\newcommand{\rn}{\mathbb{R}^{N}}
\newcommand{\bn}{\mathbb{B}^{N}}

\newcommand{\authorfootnotes}{\renewcommand\thefootnote{\@fnsymbol\c@footnote}}%

%%%%%%%%%%%%%%%%%%%%%%%%%%%%%%%%%%%%%%%%%%%%%%

\numberwithin{equation}{section} \allowdisplaybreaks

\usepackage[text={6in,8.6in},centering]{geometry}
\parindent10pt

\begin{document}\title[Nonlinear Scalar field equation with Critical Perturbation ]
		{On a class of elliptic equations with Critical Perturbations in the hyperbolic space}

		\author[Debdip Ganguly]{Debdip Ganguly}
	\address{ Department of Mathematics, Indian Institute of Technology Delhi, Hauz Khas New Delhi 110016,  India}
	\email{debdip@maths.iitd.ac.in}
	
	\author[Diksha Gupta]{Diksha Gupta}
	\address{ Department of Mathematics, Indian Institute of Technology Delhi, Hauz Khas New Delhi 110016,  India}
	\email{dikshagupta1232@gmail.com}
	
	\author[ K.~Sreenadh]{K.~Sreenadh}
	\address{ Department of Mathematics, Indian Institute of Technology Delhi, Hauz Khas New Delhi 110016,  India}
	\email{sreenadh@maths.iitd.ac.in}

	\date{\today}
	\subjclass[2010]{Primary: 35J20, 35J60, 58E30}
	\keywords{Hyperbolic space, hyperbolic bubbles, Palais-Smale decomposition, semilinear elliptic problem}    
	
	\begin{abstract}
		We study the existence and non-existence of positive solutions for the following class of nonlinear elliptic problems in the hyperbolic space
		\begin{equation*}
			%\tag{$\mathcal P$}
			%\begin{aligned}
		-\Delta_{\bn} u-\la  u=a(x)u^{p-1} \, + \, \varepsilon u^{2^*-1} \,\;\;\text{in}\;\mathbb{B}^{N}, \quad
			u \in H^{1}{(\mathbb{B}^{N})},
			%	 \end{aligned}
	\end{equation*}
	where $\mathbb{B}^N$ denotes the hyperbolic space,  $2<p<2^*:=\frac{2N}{N-2}$, if $N \geqslant 3; 2<p<+\infty$, if $N = 2,\;\lambda < \frac{(N-1)^2}{4}$, and $0< a\in L^\infty(\mathbb{B}^N).$ We first prove the existence of a positive radially symmetric ground-state solution for $a(x) \equiv 1.$ Next, we prove that for $a(x) \geq 1$, there exists a ground-state solution for $\varepsilon$ small. For proof, we employ ``conformal change of metric" which allows us to transform the original equation into a singular equation in a ball in $\mathbb R^N$. Then by carefully analysing the energy level using blow-up arguments, we prove the existence of a ground-state solution. Finally, the case $a(x) \leq 1$ is considered where we first show that there is no ground-state solution, and prove the existence of a \it bound-state solution \rm (high energy solution) for $\varepsilon$ small. We employ variational arguments in the spirit of Bahri-Li to prove the existence of high energy-bound-state solutions in the hyperbolic space.

\end{abstract}

\date{\today}
\maketitle

%%%%%%%%%%%%%%%%%%%%%%%%%%%%%%%%%%%%%%	

\section{Introduction}	
	
\noi In this paper, we investigate the existence of solutions for the following  class of critical elliptic problem in the hyperbolic space $\mathbb{B}^{N}$
\begin{equation*}
-\Delta_{\bn} u-\la  u=a(x)u^{p-1}+\varepsilon u^{2^*-1} \text { in } \bn,\; u>0  \text { in } \bn, \; u \in H^1(\bn),  \label{mainprob} \tag{$P_{\varepsilon}$}
\end{equation*}
where   $2<p<2^*:=\frac{2N}{N-2}$, if $N \geqslant 3; 2<p<+\infty$, if $N = 2,\;\lambda < \frac{(N-1)^2}{4}$, $\varepsilon$ is a real parameter, $H^{1}\left(\mathbb{B}^{N}\right)$ denotes the Sobolev space 
on the disc model of the hyperbolic space $\mathbb{B}^{N},$ $\Delta_{\mathbb{B}^{N}}$ denotes the Laplace Beltrami operator on $\mathbb{B}^{N}.$ Further, let $\mu$ denotes the hyperbolic volume measure, and $d(x,0)=\log(\frac{1+|x|}{1-|x|})$ is the hyperbolic distance of $x$  from $0$. Moreover, we investigate the existence of solutions for appropriately chosen $\varepsilon$ under the following hypotheses separately:
\begin{equation*}
	\begin{aligned}
	\left(\mathbf{A}_{1}\right):& \;\;a(x) \geq 1\;\;\forall x \in \bn,\;\;\mu( \{ x : a(x) \not\equiv 1\}) > 0, \;\; a \in L^{\infty}(\bn) 
		\text{ and } a(x) \rightarrow 1 \\
  &  \text{ as }d(x,0) \rightarrow \infty.\\
		\left(\mathbf{A}_{2}\right):& \;\; a(x) \in(0,1]\;\; \forall x \in \mathbb{B}^{N}, \;\; \mu( \{ x : a(x) \not\equiv 1\}) > 0, \;\; \inf_{x \in \bn} a(x) > 0, 
		\text { and } \\
		& \;\;a(x) \rightarrow 1 \text { as } d(x,0) \rightarrow \infty.\\
		\left(\mathbf{A}_{3}\right): &
		\;\; a(x) \equiv 1 \;\; \forall x \in \mathbb{B}^{N}.
	\end{aligned}
\end{equation*}
Further, let us prescribe the following assumption on the parameter $\lambda :$
\begin{equation}\label{lambda}\lambda \in \begin{cases}  \left( -\infty, \frac{2(p+1)}{(p+3)^2}\right], & N=2, \\ \left(-\infty, \frac{(N-1)^2}{4} \right), & N\ge 3.\end{cases}\end{equation}
Here, $\frac{(N-1)^2}{4}$ is the bottom of the $L^2-$ spectrum of $-\Delta_{\bn}.$

\medskip 

We recall that the solutions of \eqref{mainprob} are the critical points of the corresponding energy functional $ E_{\varepsilon}: H^1(\bn) \rightarrow \mathbb R$ defined as
	\begin{equation} \label{enma}
E_{\varepsilon}(u)=\frac{1}{2} \int_{\bn}\left(|\nabla_{\bn} u|^2- \la u^2\right) \mathrm{~d} V_{\bn}-\frac{1}{p} \int_{\bn}a(x)|u|^p \mathrm{~d} V_{\bn}-\frac{\varepsilon}{2^*} \int_{\bn}|u|^{2^*} \mathrm{~d} V_{\bn}.
\end{equation}	
Then $E_{\varepsilon}$ is a well-defined $C^1$ functional on $H^1(\bn).$ We use variational, refined energy estimates and blow-up arguments to prove the existence of solutions. The intriguing nature of the problem  is related to the fact that the equation 
	\eqref{mainprob} is non-compact, so standard variational methods fail. The problem studied in this article is in continuation to our study on scalar-field type equations on the hyperbolic space (see \cite{DDS, DDS1}) where we only dealt with the purely subcritical problem, i.e., when $\varepsilon =0,$ the unperturbed problem. In the subcritical case, the variational problem lacks compactness because of the {\it hyperbolic translation\rm} (see section~\ref{sec2} for more details), and so it cannot 
be solved by the standard minimization method. Moreover, in \cite{BS}, a detailed analysis of the Palais-Smale decomposition is performed. One can easily see that if $U$ is 
a solution of \eqref{mainprob}, with $\varepsilon=0$ and $a(x) \equiv 1,$ then
\[u := U\circ\tau, \quad \mbox{for} \ \tau \in I(\bn),
\]
\noi where $I(\bn)$ is the group of isometries on the hyperbolic space,   is also a solution. Hence if we define a sequence by varying $\tau_n\in I(\bn),$ then for a Palais-Smale (PS) sequence $u_{n}$, $u_{n}\circ\tau_n$ is also a  Palais-Smale (PS) sequence for \eqref{mainprob} with $\varepsilon=0$ and $a(x) \equiv 1$. In fact, it was shown in 
\cite[Theorem~3.3]{BS} that in the subcritical case, i.e., when $2< p < \frac{2N}{N-2},$ non-compact PS sequences
 are made of finitely many sequences of the form $U\circ \tau_n$.

\medskip 

The problem we considered in this article also has a critical nonlinearity for $\varepsilon >0$. The Palais-Smale decomposition established in \cite[Theorem~3.3]{BS} reveals that for critical exponent problems in the hyperbolic space, loss of compactness can happen along two different profiles, one along the hyperbolic translations and the other along concentration of Aubin-Talenti bubble (locally). This makes the problem \eqref{mainprob} very fascinating. There will be an interplay between subcritical and critical nonlinearity. Indeed, some concentration phenomena can happen owing to critical nonlinearity. Although we will only consider $\varepsilon$ small enough so that the critical nonlinearity can be seen as a perturbation of the subcritical problem. Before analysing the difficulties and methodology we adopt to restore compactness, we first discuss the ``state of the art" of such problems when posed in the Euclidean space. 

\medskip 

There has been intensive research over the past few decades on  \eqref{mainprob} when $\varepsilon =0$ in the Euclidean space after the seminal papers by Berestycki-Lions \cite{BL1, BL2}, Bahri-Berestycki \cite{BB1}, Bahri-Li \cite{Bahri-Li}, Bahri-Lions \cite{Bahri}. Many authors have contributed to a much deeper understanding of the problem in the framework of existence and multiplicity, we name a few,  e.g.,\cite{AD,  Adachi, AD2, CZ, CW, DN, LJ, AM, MMP, XZ}, and this list is far from being complete. The fundamental challenge in dealing with such problems in unbounded domains in $\mathbb{R}^N$ is the lack of compactness, even in the subcritical case, thus preventing the typical variational approaches from succeeding. As a result, various authors have studied these equations by deploying various conditions on $a(x)$ and presented new tools and methodologies to overcome this difficulty. For example, if $a(x)= a(|x|)$, the compactness of the embedding $H_r\left(\mathbb{R}^N\right)$, the subspace of $H^1\left(\mathbb{R}^N\right)$ consisting of radially symmetric functions into $L^p\left(\mathbb{R}^N\right),\;p \in\left(2,2^*\right)$ helps to restore the usage of standard variational arguments(\cite{BL1, BL2}). However, if the symmetry restriction on $a(x)$ is dropped, the problem becomes more exciting and complex. In particular, the authors in \cite{Bahri-Li, Bahri} have established the existence of positive solutions by considering the asymptotic condition on $a(x)$, i.e., $a(x) \rightarrow a_{\infty}$ as $|x| \rightarrow \infty$ and appropriate decay estimates. They carefully examined the levels of failure of PS condition, then searched for high energy solutions when ground state solution did not exist and used delicate variational and topological arguments. The subject of the multiplicity of solutions has also been investigated in \cite{CG4, CG3, RD2, RD1, VR} under different circumstances like some suitable assumption on $|a(x)- a_{\infty}|$ or some order relation between $a(x)$ and $a_{\infty},$  i.e., $a(x)$ goes to $a_{\infty}$ from above or below or a certain periodicity assumption on $a$. Furthermore, in \cite{C5} a more general equation $-\Delta u+\alpha(x) u=\beta(x)|u|^{p-1} u$ has been studied in $\mathbb{R}^N$ where $\alpha$ and $\beta$ are positive functions such that $\lim _{|x| \rightarrow \infty} \alpha(x)=a_{\infty}>0$ and $\lim _{|x| \rightarrow \infty} \beta(x)=b_{\infty}>0$. They have proven the existence and non-existence of ground state solutions under a variety of hypotheses like $\alpha(x) \rightarrow a_{\infty}$ from below and $\beta(x) \rightarrow b_{\infty}$ from above and vice-versa, $\alpha(x)$ decays faster or slower than $\beta(x).$ 

\medskip

Further, the scalar field equations in $\mathbb{R}^{N}$ involving the critical exponent provide an even greater mathematical challenge because of the loss of compactness in two profiles, translation in $\mathbb{R}^N$ and the presence of the critical exponent. The following class of Dirichlet problems in different domains in $\mathbb{R}^N$ with sufficiently smooth boundary conditions has been the focus of much research over the past several years 
\begin{equation*}
-\Delta u- \eta u=|u|^{2^*-2}u  \quad \mbox{in} \quad \Omega
\end{equation*}
where $\eta$ is a real parameter, and $\Omega$ is a domain in $\mathbb{R}^N.$ Brezis-Nirenberg, in their commendable work {\cite{BN}} have shown the existence and non-existence of positive solutions in bounded domains for $\eta>0$. They identified the first critical level below which compactness can be restored with the help of Aubin-Talenti functions, popularly known as \emph{bubbles}. On the other hand, for $\eta \leq 0,$ the shape of the domain comes into play, and it is well known using the Pohozaev identity that solutions cease to exist for star-shaped domains. Following that, attempts were made to discover solutions either by altering the domain's shape (\cite{L1, L2, L3, L4}) or experimenting with the lower order terms (\cite{S2, S3}). Also, see \cite{BN1, C1, C2} and references therein.

\medskip

Thereafter the authors in \cite{SR} studied a problem involving critical and subcritical non-linearities (also, see \cite{E1}). This motivated us to study the related problem \eqref{mainprob} in the hyperbolic space. It makes sense that subcritical analysis (\cite{DDS, DDS1, MS}) for $\varepsilon=0$ cannot be applied given the loss of compactness in two profiles, one of which is caused by the presence of the critical exponent. Moreover, because of the hyperbolic translations, the other profile can be owed to the following limiting problems
\begin{equation*}
-\Delta_{\bn} u - \la u=|u|^{p-2} u \text { in } \bn, \;\; \\
u \in H^1\left(\bn\right)
\label{infprob} \tag{$P_{\infty}$}
\end{equation*}
and 
\begin{equation*}
-\Delta_{\bn} u- \la u=|u|^{p-2} u+\varepsilon|u|^{2^*-2} u \text { in } \bn,\;\;
u \in H^1\left(\bn\right)
\label{limprob} \tag{$P_{\varepsilon,\infty}$}.
\end{equation*}
Since the  PS decomposition for the problems of the type \eqref{limprob} is yet to be discovered, we can no longer employ the standard tools and techniques.
\newpage
\noindent
\subsection{Methodologies and strategy}

	The Nehari set for a functional $J$ defined on a function space $X$ is defined as
	\[\mathbbmss{N} = \{u\in X, \; J^\prime(u)[u]=0\}.\]
	It is easy to show that this set is a manifold for a large class of functionals associated with elliptic problems such as \eqref{mainprob}, \eqref{limprob}, \eqref{infprob}. Additionally, it can also be proven that the functionals are bounded below on this Nehari manifold.  Suppose $E_\infty$ and $\mathcal{N}_\infty$ are the functional and Nehari manifold, respectively, associated with the problem \eqref{infprob}. Then the minimization problem (see \cite{MS})
	$$m=\inf _{\mathcal{N}_{\infty}} E_\infty$$
has a solution, and $m$ is achieved by some $w \in \mathcal{N}_{\infty}$, thus solving \eqref{infprob}. To be precise, authors in \cite{MS} established that in the subcritical case, and for $p > 2,$ if $N =2$ and $2 < p < 2^{\star}$ if $N \geq 3,$  the problem \eqref{infprob} has a positive solution if and only if $\lambda < \frac{(N-1)^2}{4}.$ These 
positive solutions are also shown to be unique up to hyperbolic isometries, except possibly for $N =2$ and $\lambda > \frac{2(p+1)}{(p+3)^2}.$\\
The above discussion and the hypotheses $\left(\mathbf{A}_{1}\right)$ and $\left(\mathbf{A}_{2}\right)$ indicate that the corresponding limiting problems will play a vital role in studying \eqref{mainprob}. We shall recall from \cite{BS} that the solutions to the following problem can be attributed to the loss of compactness due to the critical exponent problem
\begin{equation*}
    -\Delta V=|V|^{2^*-2} V, \quad V \in D^{1,2}\left(\mathbb{R}^N\right).
    \label{concprob} \tag{$CP_{\infty}$}
\end{equation*}
\noi
We know that \eqref{concprob} and \eqref{infprob} have been thoroughly and extensively studied in \cite{BN, MS}, respectively. However, to our knowledge, $(P_{\varepsilon, \infty})$ still needs to be explored. So we first establish the existence of its solutions, particularly the ground state solution (refer Section \ref{sec3}) using the standard variational methods. For this, we define the functional $E_{\varepsilon, \infty}:H^1\left(\bn\right) \rightarrow \mathbb{R}^N$ corresponding to  $(P_{\varepsilon, \infty})$ as
\begin{equation*}
	E_{\varepsilon, \infty}(u)=\frac{1}{2} \int_{\bn}\left(|\nabla_{\bn} u|^2- \la u^2\right) \mathrm{~d} V_{\bn}-\frac{1}{p} \int_{\bn}|u|^p \mathrm{~d} V_{\bn}-\frac{\varepsilon}{2^*} \int_{\bn}|u|^{2^*} \mathrm{~d} V_{\bn},
\end{equation*}
and $\mathcal{N}_{\varepsilon, \infty}$ denotes the associated  Nehari manifold.
%\begin{equation*}
%	\mathcal{N}_{\varepsilon, \infty}=\left\{u \in H^1\left(\bn\right) \backslash\{0\}: \quad E_{\varepsilon, \infty}^{\prime}(u)[u]=0\right\}.
%
%\end{equation*}
We address the following minimization problem 
\begin{equation*}
m_{\varepsilon}:=\inf _{\mathcal{N}_{\varepsilon, \infty}} E_{\varepsilon, \infty},
\end{equation*}
 and exploit the radial symmetry of \eqref{limprob} , then make use of the compactness of embedding and finally establish the existence of solution using the Ekeland Variational principle. The solution thus obtained by solving such a minimization problem is referred to as a \emph{ground-state solution}.
Then we move on to search for the solutions to our main problem \eqref{mainprob}. Also, as the uniqueness and the decay estimates on the solutions of the problem at infinity ($P_{\varepsilon, \infty}$) are still unknown, we fail to use the techniques used in \cite{Bahri-Li, DDS, DDS1} to establish the solutions of \eqref{mainprob}. However, we are able to recover the compactness under the hypothesis $\left(\mathbf{A}_{1}\right)$ below a level. In restoring the compactness, we test the sequence of scaled solutions of \eqref{concprob} on $E_{\varepsilon, \infty}$, but this is not feasible since there is no scaling in the hyperbolic space. Thus to perform this blow-up analysis, we conformally transformed the problem \eqref{limprob} to $\rn$ (refer Section \ref{sec2}). Then we obtained a series of estimates for the required integrals in $\rn$, which resulted in the restoration below the level $m_{\varepsilon}$ (Proposition \ref{propa1}). Additionally, we want to point out to the readers that these estimations are valid for $N \geq 4$ and $\frac{N(N-2)}{4}< \la < \frac{(N-1)^2}{4}$. Further, note that we obtained the following two estimates on $m_{\varepsilon}$
$$m_{\varepsilon}<\frac{1}{N} S^{N / 2}\left(\frac{1}{\varepsilon}\right)^{\frac{N-2}{2}} \text {for $\varepsilon$ small} $$ where $S$ is the best Sobolev constant that occurs in the Sobolev inequality in $\rn$, and 
$$\lim _{\varepsilon \rightarrow 0} m_{\varepsilon}=m.$$
Also, in \eqref{a1.1}, we deduce that $m_{\epsilon} \leq m$.
Therefore, in a way, we can say that we retrieved the compactness (for small $\varepsilon$) below the level where we have managed to avoid all the anticipated {\it{bubbles}}. Finally, with the help of the solution of \eqref{limprob} as determined in Theorem \ref{limthm} and the restored compactness, we find the ground state solution of \eqref{mainprob} (Theorem \ref{grst}) under the hypothesis $\left(\mathbf{A}_{1}\right)$. After that, we look for positive solutions of \eqref{mainprob} assuming $\left(\mathbf{A}_{2}\right)$. However, we prove the non-existence of ground state solution in this case (Proposition \ref{prop2ab}). Thus we look for high-energy solutions by assuming a decay estimate on $a(x)$ (Theorem \ref{aless}). We call such a solution as a \emph{high energy-bound-state solution} because it has an energy level above the level of the ground state and is located in an interval. The extra assumption on $a(x)$ helps regain the compactness locally (Proposition \ref{propps}) using the problem studied in our previous work \cite{DDS}, i.e., $\eqref{mainprob}$ when $\varepsilon =0.$ For this, the crucial step is to define an appropriate barycentric map (\cite{B2, B1}) to prove some auxiliary lemmas for the existence of bound state high-energy solutions. The usual barycentric map in $\rn$ enjoys a nice property under the action of translation. However, the highly non-linear nature of hyperbolic translation makes it difficult to achieve such a characteristic in our context. We conclude this article by proving the existence of bound state high-energy solutions by delicately applying energy estimates, barycentric maps and topological degree arguments.

\subsection{Main results}
Now we shall discuss and state our main results in this article. We shall prove the existence of solutions under the assumptions $\left(\mathbf{A}_{1}\right), \left(\mathbf{A}_{2}\right)$ and $\left(\mathbf{A}_{3}\right).$ First, we start with the simplest case, i.e., when $a(x)$ satisfies $\left(\mathbf{A}_{3}\right).$

\begin{thm} \label{limthm}
Let $a(x)$ satisfies $\left(\mathbf{A}_{3}\right),$ i.e., $a(x) \equiv 1$ for all $x \in \bn.$ Then there exists $\varepsilon_0>0$ such that for any $\varepsilon \in\left(0, \varepsilon_0\right)$ the problem \eqref{limprob} has a positive radially symmetric ground-state solution $w_{\varepsilon}$.
\end{thm}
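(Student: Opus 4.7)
The plan is to realise $w_\varepsilon$ as a minimiser of $E_{\varepsilon,\infty}$ on the radial Nehari manifold $\mathcal{N}_{\varepsilon,\infty}^r := \mathcal{N}_{\varepsilon,\infty} \cap H_r^1(\bn)$, where $H_r^1(\bn)$ is the subspace of $H^1(\bn)$ consisting of functions that are radially symmetric about the origin of the ball model. Since \eqref{limprob} is invariant under the isotropy group $O(N)$, the principle of symmetric criticality of Palais confines the search for critical points to this subspace. First I would verify that $\mathcal{N}_{\varepsilon,\infty}^r$ is a nonempty $C^1$-manifold via the standard Nehari fibration: for every $u \in H_r^1(\bn)\setminus\{0\}$ there is a unique $t(u)>0$ with $t(u)u \in \mathcal{N}_{\varepsilon,\infty}^r$, using the Poincar\'e inequality $\int_{\bn}|\nabla_{\bn} u|^2\, dV_{\bn} \geq \tfrac{(N-1)^2}{4}\int_{\bn} u^2\, dV_{\bn}$ and $\lambda < \tfrac{(N-1)^2}{4}$ to guarantee coercivity. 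Consequently $E_{\varepsilon,\infty}$ is bounded below by a positive constant on $\mathcal{N}_{\varepsilon,\infty}^r$, and we set $m_\varepsilon^r := \inf_{\mathcal{N}_{\varepsilon,\infty}^r} E_{\varepsilon,\infty}$.

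Next comes the key energy estimate $m_\varepsilon \to m$ as $\varepsilon \to 0^+$. Let $w$ denote the positive radial ground state of \eqref{infprob} produced in \cite{MS}. There is a unique $t_\varepsilon > 0$ with $t_\varepsilon w \in \mathcal{N}_{\varepsilon,\infty}^r$, and an implicit-function/dominated-convergence argument yields $t_\varepsilon \to 1$ and $E_{\varepsilon,\infty}(t_\varepsilon w) = m + O(\varepsilon)$. Hence, for $\varepsilon$ small enough,
$$m_\varepsilon^r \,\leq\, E_{\varepsilon,\infty}(t_\varepsilon w) \,<\, \frac{1}{N}\, S^{N/2}\, \varepsilon^{-(N-2)/2},$$
the right-hand side being the Brezis--Nirenberg threshold at which a concentrating Aubin--Talenti bubble would steal compactness.

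Then I would apply Ekeland's variational principle on $\mathcal{N}_{\varepsilon,\infty}^r$ to produce a Palais--Smale sequence $\{u_n\} \subset H_r^1(\bn)$ for $E_{\varepsilon,\infty}$ at level $m_\varepsilon^r$, bounded in $H^1(\bn)$ via the Nehari identity and the spectral gap. Passing to a weak limit $u_n \rightharpoonup u$, the compact radial embedding $H_r^1(\bn) \hookrightarrow L^q(\bn)$ for $q \in (2,2^*)$ (the same tool driving the subcritical analysis of \cite{MS}) gives $u_n \to u$ in $L^p(\bn)$, while the Nehari lower bound on $\|u_n\|_{L^p}$ forces $u \not\equiv 0$. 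For the critical term I would write $u_n = u + v_n$ and invoke Brezis--Lieb: a residual mass $\|v_n\|_{L^{2^*}}^{2^*} \to \delta > 0$ would, via the Sobolev inequality, contribute at least $\tfrac{1}{N} S^{N/2} \varepsilon^{-(N-2)/2}$ to the splitting of $E_{\varepsilon,\infty}(u_n)$, contradicting the previous step. Hence $v_n \to 0$, $u_n \to u$ strongly in $H^1(\bn)$, and $u \in \mathcal{N}_{\varepsilon,\infty}^r$ attains $m_\varepsilon^r$. Replacing $u$ by $|u|$, elliptic regularity together with the strong maximum principle (applied after the conformal transfer to a ball in $\mathbb{R}^N$ recalled in Section~\ref{sec2}) yields positivity; a hyperbolic Schwarz rearrangement argument finally ensures $m_\varepsilon = m_\varepsilon^r$, so $w_\varepsilon := u$ is the desired positive radial ground-state solution.

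The main obstacle is the interaction between the upper-bound step and the compactness step: it is precisely the strict gap $m_\varepsilon < \tfrac{1}{N} S^{N/2} \varepsilon^{-(N-2)/2}$ that rules out bubble concentration, and this gap is only available when $\varepsilon$ is small. Establishing $m_\varepsilon \to m$ requires quantitative control of the Nehari parameter $t_\varepsilon$ and of the correction produced by the critical term when tested against the fixed subcritical ground state $w$, and this estimate must be sharp enough to force the residue $v_n$ to vanish in the Brezis--Lieb splitting.
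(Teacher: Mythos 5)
Your proposal is correct and follows the same skeleton as the paper (restriction to the radial Nehari manifold, the compact embedding $H^1_r(\bn)\hookrightarrow L^p(\bn)$, Ekeland's principle, a nontrivial weak limit), but it dispatches the critical term by a genuinely different mechanism. The paper never invokes the Brezis--Nirenberg threshold here: on $\mathcal{N}_{\varepsilon,\infty}$ one can write $E_{\varepsilon,\infty}(u)=\bigl(\tfrac12-\tfrac1{2^*}\bigr)\|u\|_\lambda^2-\bigl(\tfrac1p-\tfrac1{2^*}\bigr)|u|_p^p$, so the critical term disappears from the energy, and weak lower semicontinuity of $\|\cdot\|_\lambda$ together with strong $L^p$ convergence immediately gives $E_{\varepsilon,\infty}(w_\varepsilon)\le m_\varepsilon$ once $w_\varepsilon\in\mathcal{N}_{\varepsilon,\infty}$ is verified (by testing the Ekeland Palais--Smale relation against $w_\varepsilon$). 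Your route instead uses a Brezis--Lieb splitting plus the quantization $\tfrac1N S^{N/2}\varepsilon^{-(N-2)/2}$ of a residual bubble, noting that this threshold diverges as $\varepsilon\to 0$ while $m_\varepsilon\le m$; this is heavier but also valid, and it buys you strong $H^1$ convergence of the minimizing sequence, which the paper's argument neither claims nor needs. Two small points of bookkeeping: (i) the role of ``$\varepsilon$ small'' in the paper is not the threshold gap (which is automatic) but the lower bound $|u_n|_p^p\ge\|u_n\|_\lambda^2-c\,\varepsilon\|u_n\|_\lambda^{2^*}\ge\text{const}>0$, which needs both the uniform bounds $0<c\le\|u_n\|_\lambda\le C$ on the Nehari manifold and $\varepsilon$ small; your phrase ``the Nehari lower bound on $\|u_n\|_{L^p}$'' glosses over this, although in your scheme nontriviality of the weak limit can alternatively be recovered from the threshold itself. (ii) Your explicit remark that a rearrangement argument is needed to identify $\inf_{\mathcal{N}_{\varepsilon,\infty}}E_{\varepsilon,\infty}$ with its radial counterpart is actually more careful than the paper, which takes a radial minimizing sequence at level $m_\varepsilon$ without comment.
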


%%%%%

Then, assuming $a(x) \geq 1,$ we prove the following result concerning the existence of a ground-state solution:

\begin{thm} \label{grst}
Let $a(x) \in C\left(\mathbb{B}^{N}\right) \cap L^{\infty}(\bn)$ satisfies $\left(\mathbf{A}_{1}\right).$ Assume $N \geq 4$ and 
$\frac{N(N-2)}{4} < \lambda < \frac{(N-1)^2}{4}.$
Then there exists $\varepsilon^\prime>0$ such that the problem \eqref{mainprob} has a ground-state solution for every $\varepsilon \in\left(0, \varepsilon^\prime\right)$. \label{thm1}
\end{thm}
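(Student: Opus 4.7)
The natural strategy is constrained minimization on the Nehari manifold
\[
\mathcal{N}_\varepsilon := \{u \in H^1(\bn) \setminus \{0\} : \langle E_\varepsilon'(u), u \rangle = 0\},
\]
and to show that the level $c_\varepsilon := \inf_{\mathcal{N}_\varepsilon} E_\varepsilon$ is attained. Since $\lambda < (N-1)^2/4$, the quadratic form $\int_{\bn}(|\nabla_{\bn} u|^2 - \lambda u^2)\,dV_{\bn}$ is equivalent to $\|\cdot\|_{H^1(\bn)}^2$, so routine arguments give that $\mathcal{N}_\varepsilon$ is a $C^1$-manifold, $E_\varepsilon|_{\mathcal{N}_\varepsilon}$ is coercive and bounded below by a positive constant, and $c_\varepsilon = \inf_{u\ne 0}\max_{t>0} E_\varepsilon(tu) > 0$. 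Ekeland's variational principle then yields a sequence $\{u_n\} \subset \mathcal{N}_\varepsilon$ with $E_\varepsilon(u_n) \to c_\varepsilon$ and $E_\varepsilon'(u_n) \to 0$ in $H^{-1}(\bn)$.

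The pivotal step is the strict inequality $c_\varepsilon < m_\varepsilon$, which is what activates Proposition \ref{propa1}. As test function I would use the positive radial ground state $w_\varepsilon$ of $(P_{\varepsilon,\infty})$ furnished by Theorem \ref{limthm}. Since $a \geq 1$ with $a \not\equiv 1$ on a set of positive measure and $w_\varepsilon > 0$, we have for every $t > 0$
\[
E_\varepsilon(tw_\varepsilon) = E_{\varepsilon,\infty}(tw_\varepsilon) - \frac{t^p}{p}\int_{\bn}(a(x)-1)\, w_\varepsilon^p\,dV_{\bn} < E_{\varepsilon,\infty}(tw_\varepsilon).
\]
Because $w_\varepsilon$ is a ground state of $E_{\varepsilon,\infty}$, $\max_{t>0} E_{\varepsilon,\infty}(tw_\varepsilon) = m_\varepsilon$, and therefore
\[
c_\varepsilon \;\leq\; \max_{t>0} E_\varepsilon(tw_\varepsilon) \;<\; m_\varepsilon.
\]

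With the strict inequality in hand, the hypotheses $N \geq 4$ and $N(N-2)/4 < \lambda < (N-1)^2/4$ let me invoke Proposition \ref{propa1}: for $\varepsilon$ sufficiently small, $c_\varepsilon < m_\varepsilon < \frac{1}{N} S^{N/2} \varepsilon^{-(N-2)/2}$ (using $\lim_{\varepsilon \to 0} m_\varepsilon = m$ recorded in the introduction), and every PS sequence for $E_\varepsilon$ below $m_\varepsilon$ is relatively compact in $H^1(\bn)$. Extracting a subsequence, $u_n \to u$ strongly with $u \in \mathcal{N}_\varepsilon$ and $E_\varepsilon(u) = c_\varepsilon$. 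Since $\mathcal{N}_\varepsilon$ is a natural constraint, the Lagrange multiplier vanishes and $u$ is a critical point of $E_\varepsilon$. Replacing $u$ by $|u|$ preserves both membership in $\mathcal{N}_\varepsilon$ and the energy, and the strong maximum principle promotes it to a strictly positive solution; this is the sought ground state.

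The genuine obstacle lies not in the Nehari/Ekeland machinery above, which is standard, but in legitimately invoking Proposition \ref{propa1}. That proposition requires a delicate analysis performed after a conformal change of metric that turns $(P_{\varepsilon,\infty})$ into a singular equation on the Euclidean ball, followed by a Brezis--Nirenberg-type blow-up argument using rescaled Aubin--Talenti bubbles. The dimensional restriction $N\geq 4$ and the lower bound $\lambda > N(N-2)/4$ are precisely the thresholds at which those refined energy expansions become favourable; outside this range the bubble contribution cannot be absorbed, and compactness below $m_\varepsilon$ is not known. Once that input is granted, the construction of the comparison $c_\varepsilon < m_\varepsilon$ via the limiting ground state is comparatively routine.
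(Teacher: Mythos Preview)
Your proposal is correct and follows essentially the same route as the paper: establish the strict inequality $\inf_{\mathcal{N}_\varepsilon}E_\varepsilon<m_\varepsilon$ by testing with the radial ground state $w_\varepsilon$ of $(P_{\varepsilon,\infty})$ and exploiting $a\geq 1$, $a\not\equiv 1$, then invoke Proposition~\ref{propa1} for compactness of the minimizing (PS) sequence and conclude positivity via $|u|$ and the maximum principle. The only minor remark is that Proposition~\ref{propa1} already requires only $c<m_\varepsilon$, so the additional inequality $m_\varepsilon<\tfrac{1}{N}S^{N/2}\varepsilon^{-(N-2)/2}$ you mention is not a separate hypothesis to verify but is used internally in the proof of that proposition (this is where $N\geq 4$ and $\lambda>N(N-2)/4$ enter).
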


\medskip

%%%%

Now an obvious question arises whether a ground-state solution does exist under the assumption $\left(\mathbf{A}_{2}\right)$ for the problem \eqref{mainprob}. In fact, in Proposition \ref{prop2ab}, we prove the non-existence of ground-state solutions for \eqref{mainprob}.
Furthermore, when $a(x) \leq 1,$  a bound-state solution exists in this case, particularly a high-energy solution in the spirit of Bahri-Li. We shall borrow the ideas of Bahri-Li in their seminal paper \cite{Bahri-Li} to establish the existence of a bound-state solution. However, we shall describe later the many nontrivial difficulties that arise in the hyperbolic space to achieve solutions compared to the Euclidean case. In particular, we prove the following theorem:

%%%

\begin{thm} \label{aless}
 Let $a(x) \in C\left(\mathbb{B}^{N}\right)$ satisfies $\left(\mathbf{A}_{2}\right).$  In addition, assume that $a(x)$ also satisfies 
	\begin{equation}
		a(x) \geqslant 1 -\operatorname{C \, exp}(-\delta \, d(x,0)) \quad \forall x \in \mathbb{B}^{N}, \label{acond}
	\end{equation}
	for some positive constants $C$ and $\delta.$ Then there exists $\widehat{\varepsilon}>0$ such that for any $0<\varepsilon<\widehat{\varepsilon}$ the problem \eqref{mainprob} has at least one positive solution, that is a high energy-bound-state solution.
\end{thm}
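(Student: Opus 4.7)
The plan is to adapt the Bahri--Li min--max scheme \cite{Bahri-Li} to the hyperbolic setting, taking as model functions the hyperbolic translates of the radial ground state $w_\varepsilon$ of the limit problem $(P_{\varepsilon,\infty})$ provided by Theorem~\ref{limthm}. Since $(\mathbf{A}_2)$ forces $E_\varepsilon\ge E_{\varepsilon,\infty}$ pointwise and $\inf_{\mathcal{N}_\varepsilon}E_\varepsilon=m_\varepsilon$ is not attained by Proposition~\ref{prop2ab}, the goal is a critical value strictly above $m_\varepsilon$ and strictly below the first non-compact threshold of $E_\varepsilon$. Combining the Palais--Smale analysis of Proposition~\ref{propps} with the decomposition of \cite[Theorem~3.3]{BS} and the bound $m_\varepsilon<\tfrac{1}{N}S^{N/2}\varepsilon^{-(N-2)/2}$, any PS sequence at level $c\in(m_\varepsilon,2m_\varepsilon)$ is relatively compact for $\varepsilon$ small, because the only possible non-compact profiles---several translated ground states of $(P_{\varepsilon,\infty})$ or a nontrivial Aubin--Talenti bubble---carry energy at least $2m_\varepsilon$ or $\tfrac{1}{N}S^{N/2}\varepsilon^{-(N-2)/2}$, respectively.

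\medskip

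\textbf{Test family and barycentric map.} For $y\in\bn$, let $\tau_y\in I(\bn)$ send $0$ to $y$ and put
\[
\Phi_\varepsilon(y)\ =\ t_\varepsilon(y)\,\bigl(w_\varepsilon\circ\tau_y\bigr)\ \in\ \mathcal{N}_\varepsilon,
\]
where $t_\varepsilon(y)>0$ is the Nehari rescaling. The decay \eqref{acond}, the hyperbolic decay of $w_\varepsilon$ and $(\mathbf{A}_2)$ together give $E_\varepsilon(\Phi_\varepsilon(y))\to m_\varepsilon$ as $d(y,0)\to\infty$, with a quantitative rate. Next I would construct a continuous barycentre $\beta_\varepsilon:\mathcal{N}_\varepsilon\to\overline{\bn}$ adapted to the hyperbolic geometry, for instance
\[
\beta_\varepsilon(u)\ =\ \operatorname*{arg\,min}_{z\in\bn}\int_{\bn}\Psi\bigl(d(x,z)\bigr)\,|u(x)|^p\,dV_{\bn}
\]
for a strictly convex $\Psi$, and prove that $\beta_\varepsilon\circ\Phi_\varepsilon$ is a controlled perturbation of the identity, while any $u\in\mathcal{N}_\varepsilon$ with $E_\varepsilon(u)$ close to $m_\varepsilon$ is $H^1(\bn)$-close to some far-translated ground state and therefore has $\beta_\varepsilon(u)$ far from $0$.

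\medskip

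\textbf{Min--max and conclusion.} Fix a large hyperbolic ball $B_R\subset\bn$ and set
\[
\Gamma_R\ =\ \bigl\{h\in C(\overline{B_R},\mathcal{N}_\varepsilon):\ h=\Phi_\varepsilon\ \text{on }\partial B_R\bigr\},\qquad c_\varepsilon\ =\ \inf_{h\in\Gamma_R}\ \sup_{y\in\overline{B_R}}E_\varepsilon\bigl(h(y)\bigr).
\]
For any $h\in\Gamma_R$ the degree of $\beta_\varepsilon\circ h$ at $0$ equals that of $\beta_\varepsilon\circ\Phi_\varepsilon$ by the boundary prescription, producing $y_0\in B_R$ with $\beta_\varepsilon(h(y_0))=0$; combined with the localisation of the preceding step this forces $c_\varepsilon\ge m_\varepsilon+\eta_0$ for some $\eta_0>0$. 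The upper bound $c_\varepsilon<2m_\varepsilon$ is obtained by a careful choice of test map modelled on $\Phi_\varepsilon$, using the exponential decay \eqref{acond} together with the sharp decay of $w_\varepsilon$, in the spirit of the two-body interaction estimates of \cite{Bahri-Li}. Ekeland's principle restricted to $\mathcal{N}_\varepsilon$ then yields a PS sequence at level $c_\varepsilon\in(m_\varepsilon,2m_\varepsilon)$, which by the compactness recalled above converges to a nontrivial critical point $u_\varepsilon$; replacing $u_\varepsilon$ by $|u_\varepsilon|$ and applying the strong maximum principle gives the positive high-energy bound-state solution.

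\medskip

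\textbf{Main obstacle.} The decisive difficulty, flagged already in the Introduction, is the construction and analysis of $\beta_\varepsilon$: since $I(\bn)$ acts nonlinearly on the disk, the usual Euclidean barycentre is not equivariant under hyperbolic translations, so one has to work with a geometric minimizer of the above type and verify, via quantitative hyperbolic estimates, that $\beta_\varepsilon\circ\Phi_\varepsilon$ remains close to the identity on a neighbourhood of the test family that is stable under the min--max deformation flow. A secondary but still delicate point is the strict inequality $c_\varepsilon<2m_\varepsilon$, which requires balancing the positive contribution of $(1-a(x))\ge 0$ in $E_\varepsilon$ against the cross terms produced by the critical $\varepsilon$-perturbation, and carrying out the corresponding interaction expansions in hyperbolic geodesic coordinates rather than in Euclidean ones.
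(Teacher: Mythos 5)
Your overall strategy (Bahri--Li min--max over a family of translated bubbles, a barycentre to force a linking level strictly above $m_\varepsilon$, compactness in the window $(m_\varepsilon,2m_\varepsilon)$) is the same as the paper's, but three steps as you describe them would not go through. First, your compactness argument assumes a Palais--Smale decomposition for $E_\varepsilon$ whose non-compact profiles are translated ground states of \eqref{limprob} plus Aubin--Talenti bubbles; no such decomposition is available (the paper states explicitly that the PS structure of \eqref{limprob} is unknown). The paper instead proves Proposition \ref{propps} by contradiction and diagonalization in $\varepsilon$, reducing to the purely subcritical functional $E$ where the decomposition from \cite{DDS} applies; this is why the compactness only holds on intervals $(m+\delta,2m-\delta)$ with $\varepsilon<\varepsilon_\delta$, and the same obstruction affects your lower bound ``energy near $m_\varepsilon$ implies barycentre far from $0$'', which the paper can only prove at $\varepsilon=0$ (Lemma \ref{partlem}$(a)$) and then transfers via $C_{0,\varepsilon}\to C_0$. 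Second, your min--max class (maps on a ball $\overline{B_R}$ agreeing with the single-bubble family $\Phi_\varepsilon$ on $\partial B_R$) does not give the upper bound $c_\varepsilon<2m_\varepsilon$: the competitor $h=\Phi_\varepsilon$ passes through a bubble centred at the origin, where $(\mathbf{A}_2)$ only guarantees $\inf a>0$, so its Nehari energy is of order $(\inf a)^{-2/(p-2)}m$ and can exceed $2m$. The paper's linking set is instead the \emph{two-bubble} family $\psi_R[s,y]=(1-s)w(\tau_{-x_1}(\cdot))+s\,w(\tau_{-y}(\cdot))$ with both centres far from the origin, and the bound $\mathcal{A}_{\varepsilon,R}<2m$ comes from the two-body interaction estimate of Proposition \ref{enerest} (which is where the hypothesis \eqref{acond} enters), not from decay of a single bubble.

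Third, the final positivity step is wrong as written: the critical point is obtained by min--max, not by minimization, so you cannot replace $u_\varepsilon$ by $|u_\varepsilon|$ and invoke the maximum principle. The paper argues instead that if $\bar u=\bar u^{+}-\bar u^{-}$ changed sign, then testing the equation with $\bar u^{\pm}$ puts both parts on $\mathcal{N}_\varepsilon$, whence $E_\varepsilon(\bar u)=E_\varepsilon(\bar u^{+})+E_\varepsilon(\bar u^{-})\ge 2m_\varepsilon$, contradicting $E_\varepsilon(\bar u)\le\mathcal{A}_{\varepsilon,R}<2m_\varepsilon$ (Corollary \ref{corb}); this is another place where the strict upper bound below $2m_\varepsilon$, and hence the two-bubble geometry, is indispensable.
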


\medskip 

%%%%

\noi The paper is organized as follows: In Section~\ref{sec2}, we introduce some of the notations, geometric definitions, and preliminaries concerning the hyperbolic space and derive a conformal equivalent problem on the Euclidean ball. 
Section~\ref{sec3} begins with the proof of Theorem~\ref{limthm} and further establishes auxiliary propositions, which include an upper estimate on $m_{\varepsilon},$ Palais-Smale decomposition and finally completes the proof of Theorem~\ref{grst}. Finally, Section~\ref{sec4} is devoted to the proof of Theorem~\ref{aless}.

%%%%%

\medskip

\section{Notations and Functional Analytic Preliminaries} \label{sec2}
In this section, we will introduce some of the notations and definitions used in this
paper and also recall some of the embeddings
related to the Sobolev space on the hyperbolic space.  

\medskip

\noi We will denote by $\bn$ the disc model of the hyperbolic space, i.e., the unit disc
equipped with 
the Riemannian metric $g_{\bn} := \sum\limits_{i=1}^N \left(\frac{2}{1-|x|^2}\right)^2 \, {\rm d}x_i^2$. The Euclidean unit ball $B(0,1):= \{x \in \mathbb{R}^N: |x|<1\}$ equipped with the Riemannian metric
\begin{align*}
	{\rm d}s^2 = \left(\frac{2}{1-|x|^2}\right)^2 \, {\rm d}x^2
\end{align*}
constitute the ball model for the hyperbolic $N$-space, where ${\rm d}x$ is the standard Euclidean metric and $|x| = \left(\sum_{i=1}^Nx_i^2\right)^{1/2}$ is the standard Euclidean length. To simplify our notations, we will denote $g_{\bn}$
by $g$.
The corresponding volume element is given by $\mathrm{~d} V_{\mathbb{B}^{N}} = \big(\frac{2}{1-|x|^2}\big)^N {\rm d}x, $ where ${\rm d}x$ denotes the Lebesgue 
measure on $\rn$.

\medskip 

\noi {\bf Hyperbolic distance on $\bn$.} The hyperbolic distance between two points $x$ and $y$ in $\bn$ will be denoted by $d(x, y).$ For the hyperbolic distance between
$x$ and the origin we write 

$$
\rho := \, d(x, 0) = \int_{0}^{r} \frac{2}{1 - s^2} \, {\rm d}s \, = \, \log \frac{1 + r}{1 - r},
$$
where $r = |x|$, which in turn implies that  $r = \tanh \frac{\rho}{2}.$ Moreover, the hyperbolic distance between $x, y \in \bn$ is given by 

$$
d(x, y) = \cosh^{-1} \left( 1 + \dfrac{2|x - y|^2}{(1 - |x|^2)(1 - |y|^2)} \right).
$$
It easily follows that a subset $S$ of $\bn$ is a hyperbolic sphere in $\bn$ if and only if $S$ is a Euclidean sphere in $\mathbb{R}^N$ and contained in $\bn$, probably 
with a different centre and different radius, which can be computed. Geodesic balls in $\bn$ of radius $r$ centred at the origin will be denoted by

$$
B_{r}(y) : = \{ x \in \bn : d(x, y) < r \}.
$$

\noi We also need some information on the isometries of $\bn$. Below we recall the
definition of a particular type of isometry, namely the hyperbolic translation. For
more details on the isometry group of $\bn$, we refer to \cite{RAT}.

\medskip 

\noi {\bf Hyperbolic translation.} For $b \in \bn,$ define 

\begin{equation}\label{hypt}
	\tau_b(x) = \dfrac{(1 - |b|^2)x + (|x|^2 + 2x.b + 1)b}{|b|^2 |x|^2 + 2x.b + 1},
\end{equation}
then $\tau_b$ is an isometry of $\bn$ with $\tau_b(0) = b.$ The map 
$\tau_b$ is called the hyperbolic translation of $\bn$ by $b.$ It can also be seen that 
$\tau_{-b} = \tau_b^{-1}.$

\medskip

\noi The hyperbolic gradient $\nabla_{\bn}$ and the hyperbolic Laplacian $\De_{\bn}$ are
given by
\begin{align*}
	\nabla_{\bn}=\left(\frac{1-|x|^2}{2}\right)^2\nabla,\ \ \ 
	\De_{\bn}=\left(\frac{1-|x|^2}{2}\right)^2\De + (N-2)\frac{1 - |x|^2}{2} \, \langle x, \nabla \,\rangle.
\end{align*}

\medskip

\noi{\bf A sharp Poincar\'{e}-Sobolev inequality.} (see \cite{MS})

\medskip

\noi{\bf Sobolev Space :} We will denote by ${H^{1}}(\bn)$ the Sobolev space on the disc
model of the hyperbolic space $\bn$, equipped with norm $\|u\|=\left(\int_{\mathbb{B}^N} |\nabla_{\mathbb{B}^{N}} u|^{2}\right)^{\frac{1}{2}},$
where  $|\nabla_{\bn} u| $ is given by
$|\nabla_{\bn} u| := \langle \nabla_{\bn} u, \nabla_{\bn} u \rangle^{\frac{1}{2}}_{\bn} .$ \\
\noi
For $N \geq 3$ and every $p \in \left(1, \frac{N+2}{N-2} \right]$ there exists an optimal constant 
$S_{N,p} > 0$ such that
\begin{equation*}
	S_{N,p} \left( \int_{\mathbb{B}^{N}} |u|^{p + 1} \mathrm{~d} V_{\mathbb{B}^{N}} \right)^{\frac{2}{p + 1}} 
	\leq \int_{\mathbb{B}^N} \left[|\nabla_{\mathbb{B}^{N}} u|^{2}
	- \frac{(N-1)^2}{4} u^{2}\right] \, \mathrm{~d} V_{\mathbb{B}^{N}},
\end{equation*}
for every $u \in C^{\infty}_{0}(\mathbb{B}^{N}).$ If $ N = 2$, then any $p > 1$ is allowed.

\noi A basic information is that the bottom of the spectrum of $- \Delta_{\bn}$ on $\bn$ is 
\begin{equation}\label{firsteigen}
	\frac{(N-1)^2}{4} = \inf_{u \in H^{1}(\bn)\setminus \{ 0 \}} 
	\dfrac{\int_{\bn}|\nabla_{\bn} u|^2 \, \mathrm{~d} V_{\mathbb{B}^{N}} }{\int_{\bn} |u|^2 \, \mathrm{~d} V_{\mathbb{B}^{N}}}. 
\end{equation}

\begin{rem}
	A  consequence of \eqref{firsteigen} is that if $\lambda < \frac{(N-1)^2}{4},$ then

	$$
	\left\|u\right\|_{H_{\lambda}} := \left\|u\right\|_{\lambda} := \left[ \int_{\bn} \left( |\nabla_{\bn} u|^2 - \lambda \, u^2 \right) \, \mathrm{~d} V_{\mathbb{B}^{N}} \right]^{\frac{1}{2}},
	$$
	is a norm, equivalent to the $H^1(\bn)$ norm and the corresponding inner product is given by $\langle u, v\rangle_{H_{\lambda}}.$\\
Also, throughtout the article, we use $|\cdot|_p$ to denote the $L^p(\bn)$ norm.
\begin{rem}
	It is interesting to note that there exists $c>0$ independent of small $\varepsilon$ such that
	\begin{equation}
		\|u\|_{\la} \geq c \quad \forall u \in \mathcal{N}_{\varepsilon, \infty}. \label{ub}
	\end{equation}
	Indeed using the Poincar\'{e}-Sobolev inequality on the hyperbolic space, we have
	\begin{equation*}
		0=\|u\|^2_{\la}-|u|_p^p-\varepsilon|u|_{2^*}^{2^*} \geq\|u\|^2_{\la}-c_1\|u\|^p_{\la}-c_1 \varepsilon\|u\|^{2^*}_{\la}, \quad \forall u \in \mathcal{N}_{\varepsilon, \infty}.
	\end{equation*}
\end{rem}
\noindent

 \medskip

\noi{\bf Conformal change of metric.} We want to conformally transform the problem \eqref{limprob} to the Euclidean Space. For that, define $P_{1, \bn}:=-\Delta_{\bn}+$ $\frac{(N-2)}{4(N-1)} S_{\bn}=-\Delta_{\bn}-\frac{N(N-2)}{4}$ is the first order conformally invariant Laplacian operator where $S_{\bn}:=-N(N-1)$ is the scalar curvature of ${\bn}$. Therefore, for a conformal change in metric $\tilde{g}=e^{2 \psi} g_{\bn}$, we have $P_{1, \tilde{g}}(u)=e^{-\left(\frac{N}{2}+1\right) \psi} P_{1, \bn}\left(e^{\left(\frac{N}{2}-1\right) \psi} u\right)$ for every smooth function $u$. As a consequence of the Poincar\'{e} metric being conformal to the Euclidean metric with $\psi(x)=\ln \left(\frac{1-|x|^2}{2}\right)$ we can transform \eqref{limprob} to the Euclidean space as follows: Let $H_0^1\left(\mathrm{B}^N\right)$ is the Sobolev space on $\mathrm{B}^N$ characterized by zero traces on the boundary $\partial \mathrm{B}^N$ where $\mathrm{B}^N$ is the open Euclidean ball with centre at the origin and unit radius. Suppose $u$ be a solution to \eqref{limprob}. Set $\varphi:= $ $\left(\frac{2}{1-|x|^2}\right)^{\frac{N-2}{2}}$, then $v:=\varphi u$ solves
\begin{equation}
-\Delta v-b_{\la}(x) v= c_{p}(x) v^{p-1}+ \varepsilon v^{2^{\star}-1} , \quad v \in H_0^1\left(\mathrm{B}^N\right), \label{conprob}
\end{equation}
where $b_{\la}(x)=\frac{4 \lambda-N(N-2)}{\left(1-|x|^2\right)^2}$, $c_{p}(x)= \left(\frac{2}{1-|x|^2}\right)^{N-{\left(\frac{N-2}{2}\right)p}}$. Observe that $b_{\la}(x)>0$ in $\mathrm{B}^N$ whenever $\lambda>\frac{N(N-2)}{4}$, and $N-\left(\frac{N-2}{2}\right)p > 0$ for $p< 2^{*}$.
\end{rem}

\medskip

%%%%%%%%

\section{Existence of a ground-state solution}\label{sec3}

This section is devoted to the existence of solutions of \eqref{mainprob} when the potential $a(x)$ satisfies $\left(\mathbf{A}_{1}\right)$
and $\left(\mathbf{A}_{3}\right).$ We first begin with the simplest case, when $a(x) \equiv 1.$ The proof is a straightforward adaption of standard variational arguments (\cite{SR}) in the hyperbolic setting and restoring compactness for (hyperbolic) radial functions in the subcritical
case. Let us recall a Strauss-type lemma in the hyperbolic space (see \cite[Theorem~3.1]{BS}): 

\medskip 
Let $H^1_{r}(\bn)$ denotes the subspace, 

$$
H^1_{r}(\bn) :=  \{ u \in H^1(\bn) : \mbox{ $u$ is radial}\}.
$$

\medskip 

\begin{lem}[\cite{BS}] \label{rc}
The embedding $H^1_{r}(\bn) \hookrightarrow L^p(\bn)$ for $2 < p < 2^{\star}$ is compact.
\end{lem}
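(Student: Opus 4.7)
The plan is to prove a hyperbolic Strauss-type pointwise decay estimate for radial $H^1$ functions, and then derive compactness by a standard Rellich-plus-truncation argument. The obstruction to compactness of $H^1\hookrightarrow L^p$ on the whole $\bn$ is mass escaping to infinity; for radial functions the exponential volume growth of $\bn$ actually \emph{forces} rapid pointwise decay, killing the tail uniformly.

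First I would pass to geodesic polar coordinates centered at the origin, in which the hyperbolic metric reads $d\rho^2+\sinh^2(\rho)\,d\sigma^2$ and the volume form is $\sinh^{N-1}(\rho)\,d\rho\,d\sigma$. For a radial $u\in H^1_r(\bn)$ this decouples the norms as
\[
\|u\|_{L^2(\bn)}^2 = \omega_{N-1}\int_0^\infty u(\rho)^2\sinh^{N-1}(\rho)\,d\rho,\qquad
\|\nabla_{\bn} u\|_{L^2(\bn)}^2 = \omega_{N-1}\int_0^\infty u'(\rho)^2\sinh^{N-1}(\rho)\,d\rho.
\]
For $\rho_0\ge 1$ I would differentiate $u^2\sinh^{N-1}$, integrate from $\rho_0$ to $\infty$ (the boundary term at infinity vanishes since $u^2\sinh^{N-1}\in L^1(0,\infty)$ has an $L^1$ derivative, as a short computation shows), apply Cauchy--Schwarz to the mixed term, and use $\cosh\rho\le 2\sinh\rho$ for $\rho\ge 1$ to obtain
\[
u(\rho_0)^2\sinh^{N-1}(\rho_0)\;\le\; 2\int_{\rho_0}^\infty |uu'|\sinh^{N-1}\,d\rho+(N-1)\int_{\rho_0}^\infty u^2\sinh^{N-2}\cosh\,d\rho\;\le\; C\|u\|_{H^1(\bn)}^2.
\]
This yields the pointwise Strauss estimate $|u(\rho)|\le C\|u\|_{H^1(\bn)}\,e^{-(N-1)\rho/2}$ for all $\rho\ge 1$, with $C$ depending only on $N$.

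Granted this decay, compactness follows routinely. Given a bounded sequence $\{u_n\}\subset H^1_r(\bn)$, I would extract a subsequence with $u_n\rightharpoonup u$ weakly in $H^1$ (and $u$ automatically radial). Fix $R>1$: on the precompact geodesic ball $B_R$ the Rellich--Kondrachov theorem, either intrinsically on the compact manifold with boundary $\overline{B_R}$ or after the conformal change of metric in Section~\ref{sec2} that recasts $B_R$ as a Euclidean ball, gives $u_n\to u$ strongly in $L^p(B_R)$. For the exterior, combining the Strauss estimate with $\sinh^{N-1}(\rho)\le Ce^{(N-1)\rho}$ gives
\[
\int_{\bn\setminus B_R}|u_n|^p\,dV_{\bn}\;\le\; C\int_R^\infty e^{-p(N-1)\rho/2}\,e^{(N-1)\rho}\,d\rho \;=\; C'\,e^{-(p-2)(N-1)R/2},
\]
which tends to $0$ as $R\to\infty$ uniformly in $n$ because $p>2$. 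Combining the interior strong convergence with the uniform exterior smallness yields $u_n\to u$ in $L^p(\bn)$.

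The hard part is the pointwise decay; it is a short but delicate computation once geodesic polar coordinates are set up, and its form depends essentially on the exponential volume growth of $\bn$. The hypothesis $p>2$ enters crucially in making the tail exponent negative, whereas $p<2^*$ is used only through the interior Rellich step (and is vacuous when $N=2$, as in the statement).
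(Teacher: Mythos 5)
Your proof is correct and is essentially the standard argument behind the cited result \cite[Theorem~3.1]{BS}, which the paper itself does not reprove: a radial Strauss-type decay estimate $|u(\rho)|\le C\|u\|_{H^1}e^{-(N-1)\rho/2}$ obtained in geodesic polar coordinates, combined with Rellich--Kondrachov on geodesic balls and the uniform exponential smallness of the tail for $p>2$. The only points you gloss over --- working first with smooth radial functions and concluding by density, and noting that $H^1_r$ is weakly closed so the limit is radial --- are routine and do not affect the validity of the argument.
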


\medskip

\begin{proof}[\bf Proof of Theorem~\ref{limthm}]
First, we obtain that $m_{\varepsilon} \leq m \;\forall \varepsilon>0$ as follows: let $\tau_{\varepsilon}>0$ be such that $\tau_{\varepsilon} w \in \mathcal{N}_{\varepsilon, \infty}$, then
\begin{equation} 
m_{\varepsilon} \leq E_{\varepsilon, \infty}\left(\tau_{\varepsilon} w\right) \leq E_{\infty}\left(\tau_{\varepsilon} w\right) \leq E_{\infty}(w)=m, \label{a1.1}
\end{equation}
where the first and second inequality follows from the definition of $m_{\varepsilon}, E_{\varepsilon, \infty}, E_{\infty}$, and the last inequality follows from $w$ being a unique (upto hyperbolic translations), positive radial solution of \eqref{infprob}.\\
%Indeed, we have $E_{\infty}(w)=\max _{t \geq 0} E_{\infty}(t w)$.
We confine our analysis to the following sets in order to solve the minimization problem for $m_\varepsilon$
$$H_r^1\left(\bn\right), \;\;\mathcal{N}_{r}^{\varepsilon}=\mathcal{N}_{\varepsilon, \infty} \cap H_r^1\left(\bn\right).$$
Let $\left\{u_n^\varepsilon\right\}_n$ in $\mathcal{N}_{r}^{\varepsilon}$ be a minimizing sequence, i.e., 
\begin{equation}
\left\|u_n^\varepsilon\right\|_{\la}^2 =\left|u_n^\varepsilon\right|_p^p+\varepsilon\left|u_n^\varepsilon\right|_{2^*}^{2^*}, \label{a.1}
\end{equation}
\begin{equation}
E_{\varepsilon, \infty}\left(u_n^\varepsilon\right)=\left(\frac{1}{2}-\frac{1}{p}\right)\left\|u_n^\varepsilon\right\|_{\la}^2+\left(\frac{1}{p}-\frac{1}{2^*}\right) \varepsilon\left|u_n^\varepsilon\right|_{2^*}^{2^*}=m_{\varepsilon}+o(1).\label{a.2}
\end{equation}
Using the inequalities \eqref{a1.1} and \eqref{a.2}, we get
\begin{equation}
\left\|u_n^\varepsilon\right\|_{\la}^2 \leq\left(\frac{1}{2}-\frac{1}{p}\right)^{-1} m_{\varepsilon}+o(1) \leq\left(\frac{1}{2}-\frac{1}{p}\right)^{-1} m+o(1).\label{a.3}
\end{equation}
Notice that from $\eqref{a.1},\eqref{ub},\eqref{a.3}$ and the Sobolev embedding theorem, we get the existence of $\varepsilon_0>0$ such that, for all $n \in \mathbb{N}$,
\begin{equation}
\left|u_n^\varepsilon\right|_p^p \geq\left\|u_n^\varepsilon\right\|_{\la}^2-c\; \varepsilon\left\|u_n^\varepsilon\right\|_{\la}^{2^*} \geq \text { constant }>0 \quad \forall \varepsilon \in\left(0, \varepsilon_0\right).\label{neq}
\end{equation}

Further, Lemma \eqref{rc} implies $H_r^1\left(\bn\right)$ embeds compactly in $L^p(\bn)$. Thus there exists $w_{\varepsilon} \in H_r^1(\bn)$ such that, up to a subsequence,
\begin{equation}
u_n^\varepsilon \stackrel{n \rightarrow \infty}{\longrightarrow} w_{\varepsilon}\left\{\begin{array}{l}
\text { strongly in } L^p\left(\bn \right) \text{ for } 2<p<2^*  \\
\text { weakly in } H^1\left(\bn \right) \text { and in } L^{2^*}\left(\bn \right).
\end{array}\right. \label{cgt}
\end{equation}
Moreover, $w_{\varepsilon} \not\equiv 0$ follows from \eqref{neq}. 
Ekeland's variational principle (see section 3 of \cite{IE}) allows choosing the minimizing sequence $\left\{u_n^\varepsilon\right\}_n$ in $\mathcal{N}_{r}^{\varepsilon}$ such that
\begin{equation}
E_{\varepsilon, \infty}^{\prime}\left(u_n^\varepsilon\right)[v]=\lambda_n G^{\prime}\left(u_n^\varepsilon\right)[v]+o(1)\|v\| \quad \forall v \in H_r^1\left(\bn\right). \label{a.4}
\end{equation}
where, for all $n \in \mathbb{N}, \lambda_n \in \mathbb{R}$ is the Lagrange multiplier and $G(u)=E_{\varepsilon, \infty}^{\prime}(u)[u]$. Thus $G\left(u_n^\varepsilon\right)=0$ for all $n \in \mathbb{N}$, and \eqref{a.4} implies
\begin{equation*}
0=G\left(u_n^\varepsilon\right)=E_{\varepsilon, \infty}^{\prime}\left(u_n^\varepsilon\right)\left[u_n^\varepsilon\right]=\lambda_n G^{\prime}\left(u_n^\varepsilon\right)\left[u_n^\varepsilon\right]+o(1)\left\|u_n^\varepsilon\right\|_{\la}.\end{equation*}
Hence, we get $\lambda_n=o(1)$ using $\left\|u_n^\varepsilon\right\|_\la$ is bounded and $G^{\prime}\left(u_n^\varepsilon\right)\left[u_n^\varepsilon\right] \leq c<0$ on $\mathcal{N}_r^\varepsilon.$ Choosing $v=w_{\varepsilon}$ in \eqref{a.4}, by \eqref{cgt}, we can deduce $w_{\varepsilon} \in \mathcal{N}_{\varepsilon, \infty}$.\\
By utilising once more, we obtain
\begin{equation*}
m_{\varepsilon} \leq E_{\varepsilon, \infty}\left(w_{\varepsilon}\right) \leq \liminf _{n \rightarrow \infty}\left[\left(\frac{1}{2}-\frac{1}{2^*}\right)\left\|u_n^\varepsilon\right\|_{\la}^2-\left(\frac{1}{p}-\frac{1}{2^*}\right)\left|u_n^\varepsilon\right|_p^p\right]=m_{\varepsilon}.
\end{equation*}
This means that the minimizing function we are seeking for is $w_\varepsilon$. Consequently, $w_{\varepsilon}$ solves
\begin{equation*}
-\Delta_{\bn} u-\la u=|u|^{p-2} u+\varepsilon|u|^{2^*-2} u \quad \text { in } \bn \text {. }
\end{equation*}
Furthermore, as a result of the maximum principle, $w_\varepsilon$ is strictly positive.
\end{proof}

\medskip

%%%%%%%%%%%%%%%%

\subsection{Blow-up argument}
In this section, we shall find a ground-state solution to our aimed problem, i.e., \eqref{mainprob} under the assumption $\left(\mathbf{A}_{1}\right).$  We prove the existence of a least energy positive solution $\tilde{u}$ of \eqref{mainprob}, i.e., $\tilde{u} \in \mathcal{N}_{\varepsilon}$ satisfies $E_{\varepsilon}(\tilde{u})= \inf _{\mathcal{N}_{\varepsilon}} E_{\varepsilon} $ where $\mathcal{N}_{\varepsilon}$ denotes the Nehari manifold corresponding to \eqref{mainprob}. The following auxiliary results are required to prove Theorem \ref{grst}. We first begin with the blow-up argument, which controls $m_{\varepsilon}.$ 
 
\begin{prop}\label{propres}
    The estimate mentioned  below holds:
\begin{equation}
m_{\varepsilon} \leq \frac{1}{N} S^{N / 2}\left(\frac{1}{\varepsilon}\right)^{\frac{N-2}{2}} \quad \forall \varepsilon>0 \label{gro}
\end{equation}
where $S$ is the best Sobolev constant that occurs in the Sobolev inequality in $\rn$.
\end{prop}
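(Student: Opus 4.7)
The plan is a classical variational blow-up: I test $E_{\varepsilon,\infty}$ on a family of standard Aubin--Talenti bubbles, transplanted into the hyperbolic space via the conformal transformation recorded in Section~\ref{sec2}, project the result onto the Nehari manifold $\mathcal{N}_{\varepsilon,\infty}$, and let the concentration parameter tend to zero. Concretely, fix the profile $U_\sigma(x) = C_N \sigma^{(N-2)/2}(\sigma^2 + |x|^2)^{-(N-2)/2}$ with $C_N$ so normalised that $\int_{\mathbb{R}^N}|\nabla U_\sigma|^2\, dx = \int_{\mathbb{R}^N} U_\sigma^{2^*}\, dx = S^{N/2}$, take a smooth cutoff $\eta \in C_c^\infty(\mathrm{B}^N)$ equal to $1$ in a neighbourhood of the origin, and set $v_\sigma := \eta U_\sigma \in H_0^1(\mathrm{B}^N)$ and $u_\sigma := v_\sigma/\varphi \in H^1(\bn)$, with $\varphi = (2/(1-|x|^2))^{(N-2)/2}$. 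Since $v_\sigma$ has compact support inside $\mathrm{B}^N$, the conformal identities of Section~\ref{sec2} convert the hyperbolic norms of $u_\sigma$ into Euclidean integrals of $v_\sigma$ weighted by $b_\lambda$ and $c_p$, both of which are bounded on $\operatorname{supp}\eta$.

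Next I would invoke the standard Brezis--Nirenberg bubble asymptotics: as $\sigma \to 0$,
$$\int |\nabla v_\sigma|^2\, dx = S^{N/2} + O(\sigma^{N-2}), \qquad \int v_\sigma^{2^*}\, dx = S^{N/2} + O(\sigma^N),$$
$$\int v_\sigma^p\, dx = O\!\left(\sigma^{N-(N-2)p/2}\right) = o(1) \text{ for } p \in (2, 2^*), \qquad \int v_\sigma^2\, dx = o(1) \text{ for every } N \geq 3.$$
Combined with the boundedness of $b_\lambda, c_p$ on $\operatorname{supp}\eta$, these translate directly into
$$A_\sigma := \|u_\sigma\|_\lambda^2 = S^{N/2} + o(1), \qquad C_\sigma := |u_\sigma|_{2^*}^{2^*} = S^{N/2} + o(1), \qquad B_\sigma := |u_\sigma|_p^p = o(1).$$

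With these asymptotics in hand the conclusion is a one-variable calculation. The function $h_\sigma(t) := E_{\varepsilon,\infty}(t u_\sigma) = \tfrac{t^2}{2} A_\sigma - \tfrac{t^p}{p} B_\sigma - \tfrac{\varepsilon t^{2^*}}{2^*} C_\sigma$ vanishes at $t = 0$, is positive for small $t > 0$ and diverges to $-\infty$ as $t \to \infty$, so it has a positive maximiser $t_\sigma$ at which $t_\sigma u_\sigma \in \mathcal{N}_{\varepsilon,\infty}$; hence $m_\varepsilon \leq h_\sigma(t_\sigma)$. Dropping the nonnegative subcritical term yields the cleaner envelope $\bar h_\sigma(t) := \tfrac{t^2}{2} A_\sigma - \tfrac{\varepsilon t^{2^*}}{2^*} C_\sigma \geq h_\sigma(t)$, whose maximum is computed explicitly:
$$m_\varepsilon \;\leq\; \max_{t \geq 0} \bar h_\sigma(t) \;=\; \frac{1}{N}\cdot\frac{A_\sigma^{N/2}}{(\varepsilon C_\sigma)^{(N-2)/2}}.$$
Letting $\sigma \to 0$ and using $A_\sigma, C_\sigma \to S^{N/2}$ delivers $m_\varepsilon \leq \frac{1}{N} S^{N/2} \varepsilon^{-(N-2)/2}$, which is the assertion.

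The only mildly delicate step is verifying $A_\sigma = S^{N/2} + o(1)$: via the conformal identity of Section~\ref{sec2}, $A_\sigma = \int |\nabla v_\sigma|^2\, dx - \int b_\lambda(x) v_\sigma^2\, dx$, and since $\eta$ has fixed compact support inside the open ball $\mathrm{B}^N$, the singular weight $b_\lambda$ is bounded on that support, so $\int b_\lambda v_\sigma^2\, dx = O\!\left(\int v_\sigma^2\right) = o(1)$ irrespective of the sign of $4\lambda - N(N-2)$. Every remaining step is pure algebraic manipulation of the Nehari fibering map.
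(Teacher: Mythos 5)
Your proposal is correct and follows essentially the same route as the paper: conformally transplant truncated Aubin--Talenti bubbles into the ball, project onto the Nehari manifold, and use the standard bubble asymptotics (including the $o(1)$ estimate for the $b_\lambda$-weighted $L^2$ term, which the paper records case-by-case for $N\ge 4$). The only difference is cosmetic but slightly cleaner: instead of showing the Nehari parameter $t_n\to 1$ and passing to the limit in the full functional, you drop the nonnegative subcritical term and maximize the two-term envelope $\tfrac{t^2}{2}A_\sigma-\tfrac{\varepsilon t^{2^*}}{2^*}C_\sigma$ in closed form, which yields the same bound.
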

\begin{proof}

The energy functional corresponding to \eqref{conprob} is given by
\begin{align}
J_\varepsilon(v)&=\frac{1}{2} \int_{\mathrm{B}^N}\left[|\nabla v|^2-\left(\lambda-\frac{N(N-2)}{4}\right)\left(\frac{2}{1-|x|^2}\right)^2 v^2\right] \mathrm{d} x\\
&-\frac{1}{p}  \int_{\mathrm{B}^N}\left(\frac{2}{1-|x|^2}\right)^{N-\left(\frac{N-2}{2}\right)p}|v|^{p} \mathrm{d} x - \frac{\varepsilon}{2^*}\int_{\mathrm{B}^N}|v|^{2^*} \mathrm{d} x.
\end{align}
Then for any $u \in H^1\left(\mathbb{B}^N\right)$ if $\tilde{u}$ is defined as $\tilde{u}(x)=\left(\frac{2}{1-|x|^2}\right)^{\frac{N-2}{2}} u(x)$ then $E_{\varepsilon}(u)=$ $J_\varepsilon(v)$. Moreover, $\left\langle E_{\varepsilon}^{\prime}(u), v\right\rangle=\left\langle J_\varepsilon^{\prime}(\tilde{u}), \tilde{v}\right\rangle$ where $\tilde{v}$ is defined in the same way as $\tilde{u}$. \\
Consequently, we obtain 
\begin{equation*}
m_{\varepsilon}:=\inf _{\mathcal{N}_{\varepsilon, \infty}} E_{\varepsilon, \infty}= \inf _{\mathcal{N}_{J}^\varepsilon} J_{\varepsilon}
\end{equation*}
where $\mathcal{N}_J^\varepsilon$ denotes the Nehari manifold associated with the functional $J_{\varepsilon}$.\\
Now to prove \eqref{gro}, we exhibit a sequence $v_{n}$ in $\mathcal{N}_{J}^\varepsilon$ such that $J_{\varepsilon}\left(v_n\right)$\;converges to $\frac{1}{N} S^{N / 2}\left(\frac{1}{\varepsilon}\right)^{\frac{N-2}{2}}.$\\
Before moving further, observe that the value $\frac{1}{N} S^{N / 2}\left(\frac{1}{\varepsilon}\right)^{\frac{N-2}{2}}$ can be characterised as 
\begin{equation}
\begin{aligned}
\frac{1}{N} S^{N / 2}\left(\frac{1}{\varepsilon}\right)^{\frac{N-2}{2}}= \min  \left\{\right.&\frac{1}{2} \int_{\mathbb{R}^N}|\nabla u|^2 \mathrm{d} x-\frac{\varepsilon}{2^*} \int_{\mathbb{R}^N}|u|^{2^*} \mathrm{d} x: u \in \mathcal{D}^{1,2}\left(\mathbb{R}^N\right), \\
& \left.\int_{\mathbb{R}^N}|\nabla u|^2 \mathrm{d} x=\varepsilon \int_{\mathbb{R}^N}|u|^{2^*} \mathrm{d} x\right\}.
\end{aligned} \label{AB}
\end{equation}
We define the following sequence by scaling the Aubin-Talenti bubble:
\begin{equation*}
U_{n}(x):=n^{\frac{N-2}{2}} U(nx)=(N(N-2))^{\frac{N-2}{4}} \frac{n^{\frac{N-2}{2}}}{\left(1+|nx|^2\right)^{\frac{N-2}{2}}}, \quad x \in \mathbb{R}^N
\end{equation*}
where $U \in \mathcal{D}^{1,2}\left(\mathbb{R}^N\right)$ is a fixed radial function that realizes the minimum in \eqref{AB}.\\
Let $\eta \in C_c^{\infty}\left(\mathrm{B}^N\right)$ be a cut-off function such that $\eta(x) \equiv 1$ in a neighborhood of 0. Then consider $u_{n} \in D^{1,2}(\mathbb{R}^N)$  to be a sequence of test functions  defined by $u_{n}(x):=\eta(x) U_{n}(x)$ for $x \in \mathrm{B}^N$. Further, define a sequence of functions $v_n:=t_n u_n, n \in \mathbb{N}$, where $t_n$ is such that $v_n \in \mathcal{N}_{J}^\varepsilon$, that is
\begin{equation}
\begin{aligned}
&\int_{\mathrm{B}^N}\left[|\nabla u_n|^2-\left(\lambda-\frac{N(N-2)}{4}\right)\left(\frac{2}{1-|x|^2}\right)^2 u_n^2\right] \mathrm{d} x \\
&=t_n^{p-2}\int_{\mathrm{B}^N}\left(\frac{2}{1-|x|^2}\right)^{N-\left(\frac{N-2}{2}\right)p}|u_{n}|^{p} \mathrm{d} x +\varepsilon t_n^{2^*-2}\int_{\mathrm{B}^N}|u_n|^{2^*} \mathrm{d} x.
\end{aligned} \label{a5}
\end{equation}
Now we further compute each of the above terms separately and let $n \rightarrow \infty.$

\begin{align}\label{a1}
\int_{\mathrm{B}^N} u_{n}^{2^{\star}} \mathrm{d} x &=\int_{B(0,\frac{1}{2})} u_{n}^{2^{\star}} \mathrm{d}x + \int_{\mathrm{B}^N \setminus B(0,\frac{1}{2})}u_{n}^{2^{\star}} \mathrm{d}x=\int_{B(0,\frac{1}{2}n)}U^{2^{\star}}~\mathrm{d} x +\int_{B(0,\frac{1}{2}n)^c} \eta\left(\frac{x}{n}\right)^{2^{\star}} U^{2^{\star}}\mathrm{d} x \nonumber\\
&=\int_{\mathbb{R}^N} U^{2^{\star}} \mathrm{d} x+ O\left((1/n)^{N}\right). 
\end{align}

\begin{align}
 \int_{\mathrm{B}^N} \left(\frac{1-|x|^2}{2}\right)^{\left(\frac{N-2}{2}\right)p- N} u_{n}^p & \mathrm{~d} x = \left(\int_{B(0,\frac{1}{2})} + \int_{\mathrm{B}^N \setminus B(0,\frac{1}{2})}  \right) \left(\frac{2}{1-|x|^2}\right)^{N-\left(\frac{N-2}{2}\right)p} u_{n}^p \mathrm{~d} x \notag \\
 %&+ \left(\frac{2}{1-|x|^2}\right)^{N-\left(\frac{N-2}{2}\right)p} u_{n}^p \mathrm{~d} x \notag \\ 
 &= \left(\frac{1}{n}\right)^{N- \left(\frac{N-2}{2}\right)p} \int_{B(0,\frac{1}{2}n)} \left(\frac{2}{1-|\frac{x}{n}|^2}\right)^{N-\left(\frac{N-2}{2}\right)p} U^{p}\mathrm{d} x \notag \\
  &+ \left(\frac{1}{n}\right)^{N- \left(\frac{N-2}{2}\right)p} \int_{B(0,\frac{1}{2}n)^c} \left(\frac{2}{1-|\frac{x}{n}|^2}\right)^{N-\left(\frac{N-2}{2}\right)p}\eta\left(\frac{x}{n}\right)^{p} U^{p}\mathrm{d} x \notag \\
 &= \left(\frac{1}{n}\right)^{N- \left(\frac{N-2}{2}\right)p} \int_{\mathbb{R}^N} U^{p} \mathrm{d} x + O\left(\left(\frac{1}{n}\right)^{\left(\frac{N-2}{2}\right)p}\right).
 \end{align} \label{a2}
Similarly, for the gradient term, we have the following estimate
\begin{align}
\int_{\mathrm{B}^N} |\nabla u_{n}|^{2} \mathrm{d} x&= \left(\int_{B(0,\frac{1}{2})} + \int_{\mathrm{B}^N \setminus B(0,\frac{1}{2})}  \right)|\nabla u_{n}|^{2} \mathrm{d}x\nonumber\\&=\int_{B(0,\frac{1}{2}n)}|\nabla U|^{2}~\mathrm{d} x + O\left((1/n)^{N-2}\right)\nonumber\\
&=\int_{\mathbb{R}^N}|\nabla U|^2 \mathrm{~d} x+O\left((1/n)^{N-2}\right). \label{a3}
\end{align}
Further, for the lower-order terms, we can obtain the following as $n \rightarrow \infty$
\begin{equation}
\begin{aligned}
\int_{\mathrm{B}^N} \left[\frac{4 \lambda-N(N-2)}{\left(1-|x|^2\right)^2}\right] u_{n}^2 \mathrm{~d} x =
 \left \{ \begin{array} {lc}
           \left(4\lambda-N(N-2)\right)\left(\frac{1}{n}\right)^{2}  \left[~\displaystyle\int_{\mathbb{R}^{n}} U^{2} \mathrm{~d} x  + o(1)\right] \quad    &\hbox{ for }~ N\geq 5, \\
           \left(4\lambda-N(N-2)\right)\left(\frac{1}{n}\right)^{2}   \log(n) \left[\omega_{N-1} + o(1) \right] \quad  &\hbox{ for  }~N =4.
            \end{array} \right.
\end{aligned} \label{a4}
\end{equation}

Hence, taking $n \rightarrow \infty$ in \eqref{a5}, applying the estimates \eqref{a1} - \eqref{a4} and using the fact that $U$ realizes the minimum in \eqref{AB}, $t_n \rightarrow 1$ follows and so
\begin{equation*}
J_{\varepsilon}\left(v_n\right)-\left(\frac{1}{2} \int_{\mathbb{R}^N}|\nabla U|^2 \mathrm{~d} x-\frac{\varepsilon}{2^*} \int_{\mathbb{R}^N} U^{2^{\star}} \mathrm{d} x \right) \stackrel{n \rightarrow \infty}{\longrightarrow} 0 .
\end{equation*}
Hence \eqref{gro} follows since
\begin{equation*}
\begin{aligned}
& \frac{1}{2} \int_{\mathbb{R}^N}|\nabla U|^2 \mathrm{~d} x-\frac{\varepsilon}{2^*} \int_{\mathbb{R}^N} U^{2^{\star}} \mathrm{d} x 
=\frac{1}{N}\left(\frac{1}{\varepsilon}\right)^{\frac{N-2}{2}} S^{N / 2}.
\end{aligned}
\end{equation*}
\end{proof}

\noindent
In fact, it can be deduced that:
\begin{equation}
m_{\varepsilon}<\frac{1}{N} S^{N / 2}\left(\frac{1}{\varepsilon}\right)^{\frac{N-2}{2}} \text {for $\varepsilon$ small. } \label{1rc}
\end{equation}
For this, observe that by conformally transforming the sequence found in Proposition \eqref{propres} to $\bn$, we can exhibit a sequence $\left\{\hat{u}_n\right\}$ radial functions in $\mathcal{N}_{\varepsilon, \infty}$ that converges weakly to 0 in $L^{2^*}(\bn)$, and such that $E_{\varepsilon, \infty}\left(\hat{u}_n\right) \rightarrow \frac{1}{N}\left(\frac{1}{\varepsilon}\right)^{\frac{N-2}{2}} S^{N / 2}$.
On the other hand, we have already shown in the proof of Theorem \eqref{limthm} that, up to a subsequence, any minimising sequence of radial functions weakly converges to a nonzero minimising function of $E_{\varepsilon, \infty}$ on $\mathcal{N}_{\varepsilon, \infty}$ for small $\varepsilon$. Therefore, \eqref{1rc} follows.\\
The following lemma provides another, more accurate estimate of $m_{\varepsilon}$ for small $\varepsilon$ and examines its asymptotic behaviour. 

\begin{lem} \label{asym}
The relation $m_\varepsilon \leq m$ is true for any $\varepsilon>0$. $m_{\varepsilon} \leq m$. Moreover, the following holds
\begin{equation*}
\lim _{\varepsilon \rightarrow 0} m_{\varepsilon}=m .
\end{equation*}
\begin{proof} We have already established the inequality $m_{\varepsilon} \leq m$ in \eqref{a1.1}.
Now, for $\varepsilon \in\left(0, \varepsilon_0\right)$ assume $w_{\varepsilon}$ be the minimizing function found in Theorem \ref{limthm} and $t_{\varepsilon}>0$ be such that $t_{\varepsilon} w_{\varepsilon} \in \mathcal{N}_{\infty}$, i.e., 
\begin{equation}
t_{\varepsilon}=\left(\frac{\left\|w_{\varepsilon}\right\|_{\la}^2}{\left|w_{\varepsilon}\right|_p^p}\right)^{\frac{1}{p-2}} \text {. } \label{aac}
\end{equation}
Besides, we have the following expression for $E_{\varepsilon, \infty}\left(w_{\varepsilon}\right)$
\begin{equation*}
E_{\varepsilon, \infty}\left(w_{\varepsilon}\right)=\left(\frac{1}{2}-\frac{1}{p}\right)\left\|w_{\varepsilon}\right\|_\lambda^2+\left(\frac{1}{p}-\frac{1}{2^*}\right) \varepsilon\left|w_{\varepsilon}\right|_{2^*}^{2^*}=m_{\varepsilon} \leq m 
\end{equation*}
implying $\left\|w_{\varepsilon}\right\|_{\la}$ is bounded, uniformly with respect to $\varepsilon \in\left(0, \varepsilon_0\right)$.
Moreover, $\left|w_{\varepsilon}\right|_p^p \geq c>0$ holds using \eqref{neq}. Consequently, \eqref{aac} suggests that $t_{\varepsilon}$ is bounded and
\begin{equation*}
\begin{aligned}
m & \leq E_{\infty}\left(t_{\varepsilon} w_{\varepsilon}\right)=E_{\varepsilon, \infty}\left(t_{\varepsilon} w_{\varepsilon}\right)+\frac{\varepsilon}{2^*} \int_{\bn}\left(t_{\varepsilon} w_{\varepsilon}\right)^{2^*} \mathrm{~d} V_{\bn}\\
& \leq E_{\varepsilon, \infty}\left(w_{\varepsilon}\right)+\frac{\varepsilon}{2^*} \int_{\bn}\left(t_{\varepsilon} w_{\varepsilon}\right)^{2^*} \mathrm{~d} V_{\bn}\\
& =m_{\varepsilon}+o(1) .
\end{aligned}
\end{equation*}
With this, we have completed our proof.
\end{proof}
\end{lem}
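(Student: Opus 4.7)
The inequality $m_\varepsilon \leq m$ has already been established at \eqref{a1.1} as part of the proof of Theorem \ref{limthm}, so the plan is to focus entirely on the limit $\lim_{\varepsilon \to 0^+} m_\varepsilon = m$. Since $m_\varepsilon \leq m$ gives $\limsup_{\varepsilon \to 0^+} m_\varepsilon \leq m$ for free, only the matching $\liminf$ bound requires work. The natural strategy is to take the minimiser $w_\varepsilon \in \mathcal{N}_{\varepsilon,\infty}$ guaranteed by Theorem \ref{limthm}, project it onto the Nehari manifold $\mathcal{N}_\infty$ of the unperturbed limit problem by a suitable scalar, and exchange $E_\infty$ for $E_{\varepsilon,\infty}$ at the cost of a vanishing $O(\varepsilon)$ correction.

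Concretely, I would first exploit the Nehari identity to write
\[
E_{\varepsilon,\infty}(w_\varepsilon)=\Bigl(\tfrac{1}{2}-\tfrac{1}{p}\Bigr)\|w_\varepsilon\|_\lambda^{2}+\Bigl(\tfrac{1}{p}-\tfrac{1}{2^{*}}\Bigr)\varepsilon |w_\varepsilon|_{2^{*}}^{2^{*}}=m_\varepsilon\leq m,
\]
which shows that $\|w_\varepsilon\|_\lambda$ is bounded and that $\varepsilon|w_\varepsilon|_{2^{*}}^{2^{*}}\to 0$, uniformly for $\varepsilon\in(0,\varepsilon_0)$. Next I would introduce the unique positive scalar $t_\varepsilon$ such that $t_\varepsilon w_\varepsilon\in\mathcal{N}_\infty$, namely
\[
t_\varepsilon=\left(\frac{\|w_\varepsilon\|_\lambda^{2}}{|w_\varepsilon|_p^{p}}\right)^{\!\!1/(p-2)}.
\]
Combining the upper bound on $\|w_\varepsilon\|_\lambda$ with the lower bound \eqref{neq} on $|w_\varepsilon|_p^{p}$, the family $\{t_\varepsilon\}$ stays bounded as $\varepsilon\downarrow 0$.

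Finally, using that $w_\varepsilon\in\mathcal{N}_{\varepsilon,\infty}$ realises the maximum of $t\mapsto E_{\varepsilon,\infty}(tw_\varepsilon)$, I would chain the inequalities
\[
m\leq E_\infty(t_\varepsilon w_\varepsilon)=E_{\varepsilon,\infty}(t_\varepsilon w_\varepsilon)+\frac{\varepsilon}{2^{*}}t_\varepsilon^{2^{*}}|w_\varepsilon|_{2^{*}}^{2^{*}}\leq E_{\varepsilon,\infty}(w_\varepsilon)+O(\varepsilon)=m_\varepsilon+o(1),
\]
and send $\varepsilon\to 0$ to conclude $m\leq \liminf_{\varepsilon\to 0^+}m_\varepsilon$. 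Together with $m_\varepsilon\leq m$ this yields the desired limit.

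The only delicate point I anticipate is keeping every estimate uniform in $\varepsilon$: the constant in \eqref{neq} and the boundedness of $\|w_\varepsilon\|_\lambda$ must be independent of $\varepsilon$ so that $t_\varepsilon$ does not blow up and the perturbative term $\varepsilon t_\varepsilon^{2^{*}}|w_\varepsilon|_{2^{*}}^{2^{*}}$ genuinely vanishes as $\varepsilon\to 0$. Once this uniformity is in place, the projection argument is essentially mechanical.
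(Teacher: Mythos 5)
Your proposal is correct and follows essentially the same route as the paper: the Nehari identity gives uniform boundedness of $\|w_\varepsilon\|_\lambda$, the lower bound \eqref{neq} keeps $t_\varepsilon$ bounded, and the chain $m \leq E_\infty(t_\varepsilon w_\varepsilon) \leq E_{\varepsilon,\infty}(w_\varepsilon) + O(\varepsilon) = m_\varepsilon + o(1)$ closes the argument. Your explicit remark that $w_\varepsilon$ maximises $t \mapsto E_{\varepsilon,\infty}(t w_\varepsilon)$ is the justification the paper leaves implicit for the step $E_{\varepsilon,\infty}(t_\varepsilon w_\varepsilon) \leq E_{\varepsilon,\infty}(w_\varepsilon)$.
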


%%%%%%

\medskip

In the subsequent proposition, we analyse the Palais-Smale sequences at a level $c\left((\mathrm{PS})_c\right.$-sequences) that will be 
beneficial in establishing the existence of a ground-state solution. We adapt the approach as described in \cite[Proposition~3.2]{SR} in our setting. 

%%%%%%

\medskip

\begin{prop} \label{propa1}
    Let $a(x) \in C\left(\mathbb{B}^{N}\right) \cap L^{\infty}(\bn)$ satisfies $\left(\mathbf{A}_{1}\right).$ Let $\varepsilon>0$ and $\left\{u_n\right\}_n$ be a $(P S)_c$-sequence for $E_{\varepsilon}$ constrained on $\mathcal{N}_{\varepsilon}$. If $c< m_{\varepsilon},$ \text{then} $\left\{u_n\right\}_n$ is relatively compact.
\end{prop}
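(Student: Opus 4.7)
The plan is to follow, in the hyperbolic setting, the Brezis--Lieb splitting strategy used for the Euclidean analogue in \cite[Proposition~3.2]{SR}, and to eliminate the two profiles of non-compactness (Aubin--Talenti concentration and hyperbolic translation) by comparing energies with $m_\varepsilon$. First, since $\{u_n\}\subset\mathcal{N}_\varepsilon$ and the Lagrange multipliers vanish (by the same argument that gives $\lambda_n=o(1)$ in the proof of Theorem~\ref{limthm}), we have $E_\varepsilon'(u_n)\to 0$ in $H^{-1}(\mathbb{B}^N)$. The identity
$$c+o(1)=E_\varepsilon(u_n)-\tfrac{1}{p}\langle E_\varepsilon'(u_n),u_n\rangle=\bigl(\tfrac12-\tfrac1p\bigr)\|u_n\|_\lambda^{2}+\bigl(\tfrac1p-\tfrac{1}{2^*}\bigr)\varepsilon\,|u_n|_{2^*}^{2^*}$$
bounds $\|u_n\|_\lambda$, so along a subsequence $u_n\rightharpoonup u$ in $H^1(\mathbb{B}^N)$; testing $E_\varepsilon'(u_n)\to 0$ against $C_c^\infty(\mathbb{B}^N)$ shows $u$ is a weak solution of \eqref{mainprob}, whence $E_\varepsilon(u)\geq 0$.

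Next, set $v_n:=u_n-u$ (so $v_n\rightharpoonup 0$) and apply the Brezis--Lieb lemma: $|v_n|_q^q=|u_n|_q^q-|u|_q^q+o(1)$ for $q\in\{p,2^*\}$, and $\|v_n\|_\lambda^{2}=\|u_n\|_\lambda^{2}-\|u\|_\lambda^{2}+o(1)$ by the Hilbert-space expansion. Because $a\in L^\infty(\mathbb{B}^N)$, $a(x)\to 1$ as $d(x,0)\to\infty$ (hypothesis $(\mathbf{A}_1)$) and $v_n\to 0$ in $L^p_{\mathrm{loc}}$, a standard cut-off argument gives $\int_{\mathbb{B}^N}\bigl(a(x)-1\bigr)|v_n|^p\,\mathrm{d}V_{\mathbb{B}^N}=o(1)$. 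Combining these,
$$E_\varepsilon(u_n)=E_\varepsilon(u)+E_{\varepsilon,\infty}(v_n)+o(1),\qquad \langle E_{\varepsilon,\infty}'(v_n),v_n\rangle=o(1),$$
so $\{v_n\}$ is, in effect, a Palais--Smale-type sequence for $E_{\varepsilon,\infty}$ at level $c-E_\varepsilon(u)$, satisfying the Nehari identity $\|v_n\|_\lambda^{2}=|v_n|_p^{p}+\varepsilon|v_n|_{2^*}^{2^*}+o(1)$.

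The core of the argument is ruling out $\|v_n\|_\lambda\not\to 0$. Assuming the contrary, the Nehari-type identity forces $\|v_n\|_\lambda\geq c_0>0$, and one of two profiles must occur. If $|v_n|_p\to 0$, then $\varepsilon|v_n|_{2^*}^{2^*}$ is bounded below; a Lions-type vanishing lemma applied after the conformal change of metric of Section~\ref{sec2} produces concentration points $x_n\in\mathbb{B}^N$ and scales $\eta_n\downarrow 0$ such that a rescaled-translated version of $v_n$ converges in $\mathcal{D}^{1,2}(\mathbb{R}^N)$ to a nontrivial solution of $-\Delta V=\varepsilon V^{2^*-1}$, and a further Brezis--Lieb split gives $E_{\varepsilon,\infty}(v_n)\geq \frac{1}{N}\varepsilon^{-(N-2)/2}S^{N/2}+o(1)$. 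Otherwise $|v_n|_p\not\to 0$, and non-vanishing in the subcritical sense provides $\tau_n\in I(\mathbb{B}^N)$ with $d(\tau_n(0),0)\to\infty$ and $v_n\circ\tau_n\rightharpoonup w\not\equiv 0$; the $I(\mathbb{B}^N)$-invariance of $E_{\varepsilon,\infty}$ shows $w\in\mathcal{N}_{\varepsilon,\infty}$ is a critical point, hence $E_{\varepsilon,\infty}(w)\geq m_\varepsilon$, and again by Brezis--Lieb $E_{\varepsilon,\infty}(v_n)\geq m_\varepsilon+o(1)$. In either case,
$$c\;\geq\; E_\varepsilon(u)+\min\Bigl\{\tfrac{1}{N}\varepsilon^{-(N-2)/2}S^{N/2},\;m_\varepsilon\Bigr\}\;=\;E_\varepsilon(u)+m_\varepsilon\;\geq\;m_\varepsilon,$$
using Proposition~\ref{propres} for the equality; this contradicts $c<m_\varepsilon$. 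Hence $v_n\to 0$ strongly, i.e.\ $u_n\to u$ in $H^1(\mathbb{B}^N)$, which is the claimed relative compactness.

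The delicate step is the dichotomy in Step~3: because no Palais--Smale decomposition for the critical limiting problem $(P_{\varepsilon,\infty})$ is yet available, the separation of the two profiles (bubble and hyperbolic translation) must be carried out by hand, using the conformal change of metric to reduce the critical concentration to the Euclidean problem \eqref{concprob}, and exploiting $a\to 1$ at infinity together with the isometry invariance of $E_{\varepsilon,\infty}$ to handle the translation profile.
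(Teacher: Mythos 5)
Your proposal is correct and follows essentially the same route as the paper: the weak limit is shown to solve \eqref{mainprob} with nonnegative energy, the remainder $v_n=u_n-\bar u$ is split off via Brezis--Lieb together with $a\to1$ at infinity, and the dichotomy between a hyperbolic-translation profile (a nontrivial critical point of $E_{\varepsilon,\infty}$, hence energy $\geq m_\varepsilon$) and a concentrating bubble (energy $\geq \tfrac1N S^{N/2}\varepsilon^{-(N-2)/2}\geq m_\varepsilon$ by Proposition~\ref{propres}) contradicts $c<m_\varepsilon$. The only cosmetic difference is that in the vanishing case ($|v_n|_p\to0$) the paper first locates the surviving $L^{2^*}$ mass by a covering argument and then invokes the Palais--Smale decomposition of \cite[Theorem~3.3]{BS} for the auxiliary critical functional $I_\varepsilon$, whereas you extract a single Euclidean bubble by hand after the conformal change; both yield the same lower bound.
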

\begin{proof}
The proof is divided into several steps. 
\quad
	\begin{enumerate}[label = \textbf{Step \arabic*:}]
		\item Let us commence by noticing that the sequence $\left\{\left\|u_n\right\|_\la \right\}_n$ is bounded above since it is a PS sequence, and is bounded away from 0 as it belongs to $\mathcal{N}_{\varepsilon}$. 
Then, using the same approach as in Theorem \ref{limthm}, we can show that $\left\{u_n\right\}_n$ is a $(P S)_c$-sequence for the functional $E_{\varepsilon}$ as well, that is, $\forall v \in H^1(\bn)$
\begin{align}
   & \int_{\bn} \nabla_{\bn} u_n \cdot \nabla_{\bn} v \mathrm{~d} V_{\bn}- \la \int_{\bn}  u_n v \mathrm{~d} V_{\mathbb{B}^{N}}- \int_{\bn} a(x)\left|u_n\right|^{p-2} u_n v \mathrm{~d} V_{\mathbb{B}^{N}} \notag \\
   &-\varepsilon \int_{\bn}\left|u_n\right|^{2^*-2} u_n v \mathrm{~d} V_{\mathbb{B}^{N}}  
    =o(1)\|v\|_\lambda, \label{1a}
    \end{align}
    and
    $E_{\varepsilon}(u_n)\rightarrow c$ as $n \rightarrow \infty$.\\
In the subsequent steps, $\left\{u_n\right\}_n$ will denote the sequence and its subsequences.\\
The boundedness of $\left\{u_n\right\}_n$ in $H^1(\bn)$ implies existence of a function $\bar{u} \in H^1(\bn)$ such that
\begin{equation}
u_n \stackrel{n \rightarrow \infty}{\longrightarrow} \bar{u} \quad\left\{\begin{array}{l}
\text { weakly in } H^1(\bn) \text { and in } L^{2^*}(\bn) \\
\text { strongly in } L_{\mathrm{loc}}^p(\bn) \text { and in } L_{\mathrm{loc}}^2(\bn) \\
\text { a.e. in } \bn.
\end{array}\right. \label{1b}
\end{equation}
Then using \eqref{1b} and \eqref{1a}, we get $\bar{u}$ is a weak solution of \eqref{mainprob}, hence
\begin{equation}
\|\bar{u}\|_\la=|a^{1/p}\bar{u}|_p^p+\varepsilon|\bar{u}|_{2^*}^{2^*}. \label{1c}
\end{equation}
\item We intend to prove that $u_n \rightarrow \bar{u}$ in $H^1(\bn)$.\\
We will argue by contradiction that if $u_n \nrightarrow \bar{u}$ in $H^1(\bn)$, then the sequence $v_n:=u_n-\bar{u}$ verifies $\left\|v_n\right\|_\la \geq \hat{c}>0, \forall n \in \mathbb{N}$. Utilising \eqref{1b} and the Brezis-Lieb Lemma,
\begin{equation}
E_{\varepsilon}\left(u_n\right)=E_{\varepsilon}(\bar{u})+E_{\varepsilon}\left(v_n\right)+o(1). \label{1l}
\end{equation}
Additionally, because $\bar{u}$ is a solution of \eqref{mainprob}$,\left\{v_n\right\}_n$ also becomes a (PS)-sequence for $E_{\varepsilon}$.
We now claim the following
\begin{equation}
\left|v_n\right|_{2^*}^{2^*} \geq \tilde{c}>0. \label{1d}
\end{equation}
If not, $u_n \rightarrow \bar{u}$ in $L^{2^*}(\bn)$ and via interpolation in $L^p(\bn)$, since $\left\{u_n\right\}_n$ is bounded in $L^2(\bn)$.
 Further, by $\left\|u_n\right\|_\la=\left|a^{1/p}u_n\right|_p^p+\varepsilon\left|u_n\right|_{2^*}^{2^*}$ and \eqref{1c}, we have
\begin{equation*}
\lim _{n \rightarrow \infty}\left\|u_n\right\|_\la=|a^{1/p}\bar{u}|_p^p+\varepsilon|\bar{u}|_{2^*}^{2^*}=\|\bar{u}\|_\la
\end{equation*}
implying  $u_n \rightarrow \bar{u}$ in $H^1(\bn)$, leading to a contradiction.
\item Now covering $\mathbb{B}^{N}$ by balls of radius $r$, in such a way that each point of $\mathbb{B}^{N}$ is contained inside at most $N_0$ balls. The fact that $v_n \in L^{2^*}\left(\bn\right)$ allows us to define
\begin{equation*}
d_n=\sup _{y \in \bn} |v_{n}|_{L^{2^*}(B_r(y))} \quad \forall n \in \mathbb{N} .
\end{equation*}
Using \eqref{1d} and the boundedness of $\left\{u_n\right\}_n$ in $H^1(\bn)$, we get
\begin{equation}
\begin{aligned}
0<\tilde{c} \leq\left|v_n\right|_{2^*}^{2^*} & =\sum_{y \in \bn}^{}\left|v_n\right|_{L^{2^*}\left(B_r(y)\right)}^{2^*} \\
& \leq d_n^{2^*-2} \sum_{y \in \bn}^{}\left|v_n\right|_{L^{2^*}\left(B_r(y) \right)}^{2} \leq c d_n^{2^*-2} \sum_{y \in \bn}^{}\left\|v_n\right\|_{H^1\left(B_r(y)\right)}^2 \\
& \leq c^{\prime} d_n^{2^*-2}.
\end{aligned} \label{1i}
\end{equation}
Thus $d_n \geq \gamma \;\;\forall n \in \mathbb{N}$, where $\gamma>0$.
Consequently, we can find a sequence $\left\{z_{n}\right\} \subset \mathbb{B}^{N}$ such that

\begin{equation}
	\int_{B_r(z_{n})}\left|v_{n}\right|^{2^*}\mathrm{~d} V_{\mathbb{B}^{N}}\geq \gamma >0. \label{2n}
\end{equation}
Further, define
\begin{equation*}
w_{n}(x):=v_{n}\left(\tau_{z_{n}}(x)\right)
\end{equation*}
where $\tau_{z_{n}}$ is the hyperbolic translation of $\mathbb{B}^{N}$ by $z_{n}$.
%\textcolor{blue}{Since $\left\{v_n\right\}_n$ is a (PS)-sequence, $\left\{w_n\right\}_n$ is a (PS)-sequence, too.}\\
Since the sequence $\left\{w_n\right\}_n$ is bounded in $H^1(\bn)$,  there exists some $\bar{w} \in H^1\left(\bn\right)$ such that
\begin{equation}
w_n \stackrel{n \rightarrow \infty}{\longrightarrow} \bar{w} \quad\left\{\begin{array}{l}
\text { weakly in } H^1\left(\bn\right) \text { and in } L^{2^*}\left(\bn\right) \\
\text { strongly in } L_{\text{loc}}^p\left(\bn\right) \text { and in } L_{\mathrm{loc}}^2\left(\bn\right) \\
\text { a.e. in } \bn .
\end{array}\right. \label{1g}
\end{equation}
Setting $L_0= B_1(0)$, one of the following two scenarios can happen:\\
\begin{equation*}
(a) \int_{L_0}\left|w_n(x)\right|^p \mathrm{~d} V_{\mathbb{B}^{N}}(x) \geq c>0 
\end{equation*}
\begin{equation}
(b) \int_{L_0}\left|w_n(x)\right|^p \mathrm{~d} V_{\mathbb{B}^{N}}(x) \stackrel{n \rightarrow \infty}{\longrightarrow} 0. \label{1e}
\end{equation}
\item First of all, let's assume that \eqref{1e}$(a)$ is true. Then, as $v_n \rightarrow 0$ in $L_{\text{loc}}^p\left(\bn\right)$ will imply $\tau_{z_{n}}(0) \rightarrow \infty \text { as } n \rightarrow \infty$. Further, combining the facts that $\left\{u_n\right\}_n$ is a $(P S)$-sequence and $\bar{u}$ is a solution of \eqref{mainprob}, we can obtain the following
\begin{equation}
\begin{aligned}
\hspace{1.1cm}&\int_{\bn} \nabla_{\bn} w_n  \cdot \nabla_{\bn} \phi \mathrm{~d} V_{\mathbb{B}^{N}}- \la \int_{\bn} w_n \phi \mathrm{~d} V_{\mathbb{B}^{N}}-\int_{\bn}\left|w_n\right|^{p-2} w_n \phi \mathrm{~d} V_{\mathbb{B}^{N}}\\
&\hspace{9cm}-\varepsilon \int_{\bn}\left|w_n\right|^{2^*-2} w_n \phi \mathrm{~d} V_{\mathbb{B}^{N}} \\
& =\int_{\bn}\left[a\left(\tau_{z_{n}}(\cdot)\right)-1\right] \left|w_n\right|^{p-2} w_n \phi\mathrm{~d} V_{\mathbb{B}^{N}}+o(1)\|\phi\|=o(1)\|\phi\|, \quad \forall \phi \in \mathcal{C}_0^{\infty}\left(\bn\right).&
\end{aligned} \label{1f}
\end{equation}
\noi
As a result of \eqref{1e}$(a)$, \eqref{1f}, \eqref{1g}, we are able to conclude that $\bar{w}$ is a nonzero solution of $\eqref{limprob}$. Then $\left\{w_n-\bar{w}\right\}_n$ is a $(P S)$-sequence for $E_{\varepsilon, \infty}$ and $E_{\varepsilon, \infty}\left(w_n-\bar{w}\right) \geq o(1)$. Consequently, by using the Brezis-Lieb Lemma, we derive
\begin{equation*}
\begin{aligned}
c & =E_{\varepsilon}\left(u_n\right)+o(1)=E_{\varepsilon}(\bar{u})+E_{\varepsilon, \infty}(\bar{w}) \\
& +E_{\varepsilon, \infty}\left(w_n-\bar{w}\right)+o(1) \geq E_{\varepsilon, \infty}(\bar{w})+o(1) \geq m_{\varepsilon}+o(1)
\end{aligned}
\end{equation*}
that contradicts the assumption that $c<m_{\varepsilon}$. Thus we can conclude that \eqref{1e}$(a)$ can not hold.

\item Finally, we suppose that \eqref{1e}$(b)$ holds and get a contradiction again. Notice that in this case, we can also assume the following
\begin{equation}
\tilde{d}_n=\sup _{y \in \bn} |v_{n}|_{L^{p}(B_r(y))}=\sup _{y \in \bn} |w_{n}|_{L^{p}(B_r(y))}\stackrel{n \rightarrow \infty}{\longrightarrow} 0. \label{1h}
\end{equation}
Indeed, if it is false, we can land in the case \eqref{1e}$(a)$ by replacing $B_r(y)$ with a ball $B_r(\tilde{y})$ that satisfies $\left|v_n\right|_{L^p\left(B_r(\tilde{y})\right)} \geq c_1>0$. So, it is possible to assume \eqref{1h}. Furthermore, repeating the inequalities in \eqref{1i} with the $L^p$-norm instead of the $L^{2^*}$-norm, and $\tilde{d}_n$ instead of $d_n$ yield
\begin{equation}
\left|v_n\right|_p=\left|w_n\right|_p \stackrel{n \rightarrow \infty}{\longrightarrow} 0 . \label{1j}
\end{equation}
Thus implying $\bar{w} =0$. 
Also, observe that \eqref{1j} gives
\begin{equation}
w_n \stackrel{n \rightarrow \infty}{\longrightarrow} 0 \;\; \text { in } L_{\text{loc}}^q(\bn) \text{ for $q<p$}. \label{1k}
\end{equation}
Now let's consider that $\left\{z_n\right\}_n$ is bounded so that in the subsequent argument we can consider $z_n=0, \forall n \in \mathbb{N}$. Further, fix $R>0$ such that $|a(x)-1|<\eta\; \forall x \in \mathbb{B}^N \setminus B_R(0)$, where $\eta$ is a suitable small constant that will be chosen later. \\
Consider the functionals $I_{\varepsilon}$: $\mathcal{D}^{1,2}\left(\mathbb{B}^N\right) \rightarrow \mathbb{R}$ and $J_{\epsilon, \infty}$: $\mathcal{D}^{1,2}\left(\mathbb{R}^N\right) \rightarrow \mathbb{R}$ defined by
\begin{equation*}
\begin{gathered}
I_{\varepsilon}(u)=\frac{1}{2} \int_{\bn}\left(|\nabla_{\bn} u|^2- \la u^2\right) \mathrm{~d} V_{\bn}- \frac{\varepsilon}{2^*} \int_{\bn}|u|^{2^*} \mathrm{~d} V_{\bn}, \\
J_{\varepsilon, \infty}(u)=\frac{1}{2} \int_{\mathbb{R}^N}|\nabla u|^2 \mathrm{d} x-\frac{\varepsilon}{2^*} \int_{\mathbb{R}^N}|u|^{2^*} \mathrm{d} x .
\end{gathered}
\end{equation*}
Firstly, observe that the Euler Lagrange equation (say $(AP_{\infty})$) corresponding to the functional $I_{\varepsilon}$ is invariant under hyperbolic isometries. Thus for a solution $U$ of $(AP_{\infty})$,  if we define
\begin{equation}
u_n=U \circ \tau_n \label{ps1}
\end{equation}
where $\tau_n \in$ $I\left(\mathbb{B}^N\right)$ with $\tau_n(0) \rightarrow \infty$ and $I\left(\mathbb{B}^N\right)$ denotes the isometry group of $\bn$, then $\{u_n\}_n$ is a PS sequence converging weakly to zero. Moreover, in the case of critical exponent, another PS sequence can be exhibited emerging from the concentration phenomenon as follows (\cite{BS}). \\
Let $V$ be a solution of the Euler-Lagrange equation corresponding to the functional $J_{\epsilon, \infty}$. Fix $x_0 \in \mathbb{B}^N$ and $\phi \in C_c^{\infty}\left(\mathbb{B}^N\right)$ such that $0 \leq \phi \leq 1$ and $\phi \equiv 1$ in a neighborhood of $x_0$. Define
\begin{equation}
v_n(x)=\left(\frac{1-|x|^2}{2}\right)^{\frac{N-2}{2}} \phi(x) \varepsilon_n^{\frac{2-N}{2}} V\left(\left(x-x_0\right) / \varepsilon_n\right) \label{ps2}
\end{equation}
where $\varepsilon_n>0$ and $\varepsilon_n \rightarrow 0$, then $v_n$ is also a PS sequence. \\

Then using \eqref{1a}, \eqref{1j}, and \eqref{1k}, we can get that $\left\{w_n\right\}_n$ is a (PS)-sequence for $I_{\varepsilon}(u)$. So, recalling Theorem $3.3$ of \cite{BS}: there exist $n_1, n_2 \in \mathbb{N}$ and functions $u_n^j \in H^1(\mathbb{B}^N), 0 \leq j \leq n_1, v_n^k \in H^1(\mathbb{B}^N), 0 \leq k \leq n_2$ and $w \in H^1\left(\mathbb{B}^N\right)$ such that upto a subsequence
\begin{equation*}
w_n=w+\sum_{j=1}^{n_1} u_n^j+\sum_{k=1}^{n_2} v_n^k+o(1)
\end{equation*}
where $I^{\prime}_{\varepsilon}(w)=0, u_n^j, v_n^k$ are $P S$ sequences of the form \eqref{ps1} and \eqref{ps2}, respectively and $o(1) \rightarrow 0$ in $H^1(\mathbb{B}^N)$. Moreover
\begin{equation}
I_{\varepsilon}(w_{n})=I_{\varepsilon}(w)+\sum_{j=1}^{n_1} I_{\varepsilon}\left(U_j\right)+\sum_{k=1}^{n_2} J_{\varepsilon, \infty}\left(V_k\right)+o(1) \label{ps4}
\end{equation}
where $U_j, V_k$ are the solutions Euler Lagrange equation associated with $I_{\varepsilon}$ and $J_{\varepsilon, \infty}$, respectively, corresponding to $u_n^j$, and $v_n^k$.\\
Moreover, \eqref{AB} implies
\begin{equation}
J_{\varepsilon, \infty}\left(V_k\right) \geq \frac{1}{N} S^{N / 2}\left(\frac{1}{\varepsilon}\right)^{\frac{N-2}{2}} \label{1m}
\end{equation}
where $S$ is the best Sobolev constant that occurs in the Sobolev inequality in $\rn$.\\
Finally, by \eqref{1l},\eqref{1m},\eqref{1j},\eqref{ps4} and Proposition \ref{propres}, we have
\begin{equation*}
\begin{aligned}
E_{\varepsilon}\left(u_n\right) & =E_{\varepsilon}(\bar{u})+E_{\varepsilon}\left(v_n\right)+o(1) \\
& \geq E_{\varepsilon}(\bar{u})+I_{\varepsilon}\left(w_n\right)- \frac{1}{p} \int_{\mathbb{B}^N \setminus B_R(0)}(a(x)-1)|v_{n}|^p \mathrm{~d} V_{\bn}+o(1) \\
& \geq E_{\varepsilon}(\bar{u})+\sum_{k=1}^{n_2} J_{\varepsilon, \infty}\left(V_k\right) - \eta \hat{c}+o(1) \\
& \geq \frac{1}{N} S^{N / 2}\left(\frac{1}{\varepsilon}\right)^{\frac{N-2}{2}}- \eta \hat{c}+o(1) \\
& \geq m_{\varepsilon} -\eta \hat{c} + o(1)\\
& > c \\
\end{aligned}
\end{equation*}
for $n$  large and $\eta$ sufficiently small enough. Therefore, it contradicts our assumption $E_{\varepsilon}\left(u_n\right) \rightarrow c$. To conclude, let's take into account the case $d(z_n,0) \rightarrow \infty$. In this case, it is simpler to repeat the argument implemented in the case $\left\{z_n\right\}_n$ bounded.
\end{enumerate}
\end{proof}
\begin{proof}[Proof of Theorem \ref{thm1}]

The following inequality will be put to use for establishing a solution
\begin{equation}
\inf _{\mathcal{N}_{\varepsilon}} E_{\varepsilon}< m_{\varepsilon}. \label{ps5}
\end{equation}
In order to prove the above inequality, take into account the minimising function $w_\varepsilon$ provided by the Theorem \ref{limthm}, and $t>0$ be such that $t w_{\varepsilon} \in \mathcal{N}_{\varepsilon}$. Then
\begin{equation*}
\inf _{\mathcal{N}_{\varepsilon}} E_{\varepsilon} \leq E_{\varepsilon}\left(t w_{\varepsilon}\right)<E_{\varepsilon, \infty}\left(t w_{\varepsilon}\right) \leq E_{\varepsilon, \infty}\left(w_{\varepsilon}\right)=m_{\varepsilon}.
\end{equation*} 
It is implied by \eqref{ps5} and the Proposition \ref{propa1} that a minimising function $\tilde u$ exists for the functional $E_\varepsilon$ constrained on $\mathcal{N_\varepsilon}$. 
Further, one can verify that $\tilde u$ is a constant sign function that can be chosen strictly positive by employing the same argument used in the proof of Theorem \ref{limthm}.
\end{proof}

\section{ bound-state solution: Existence and Non-existence } \label{sec4}

This section is devoted to the proof of Theorem~\ref{aless} concerning the solutions of \eqref{mainprob} for $a(x) \leq 1$. Firstly, we show the non-existence of a ground-state solution. Then we restore (local) compactness in a range of functional values and prove Theorem~\ref{aless}. In particular, we prove the following proposition, which led us to a non-existence result:
\begin{prop}Let $\varepsilon \in\left[0, \varepsilon_0\right)$. Assume $a(x) \leq 1$, $a(x) \not \equiv 1$, then
\begin{equation}
\inf _{\mathcal{N}_{\varepsilon}} E_{\varepsilon}=m_{\varepsilon}.\label{4a}
\end{equation}
Furthermore, there is no solution to the minimization problem \eqref{4a}. Moreover, here $E_{0}:=E, \;\mathcal{N}_{0}:= \mathcal{N},\;m_0 :=m$. \label{prop2ab}
\end{prop}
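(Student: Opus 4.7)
The plan is to prove both inequalities $\inf_{\mathcal{N}_\varepsilon} E_\varepsilon \leq m_\varepsilon$ and $\inf_{\mathcal{N}_\varepsilon} E_\varepsilon \geq m_\varepsilon$, and then establish non-attainment via the strong maximum principle.

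For the upper bound, I would use as test functions the radial minimizer $w_\varepsilon \in \mathcal{N}_{\varepsilon,\infty}$ from Theorem~\ref{limthm} (or $w$ for $\varepsilon=0$) translated to infinity. Pick a sequence $\tau_n \in I(\mathbb{B}^N)$ with $d(\tau_n(0),0) \to \infty$ and set $\widetilde{w}_n := w_\varepsilon \circ \tau_n$. Isometry invariance gives $\|\widetilde{w}_n\|_\lambda = \|w_\varepsilon\|_\lambda$, $|\widetilde{w}_n|_p = |w_\varepsilon|_p$, and $|\widetilde{w}_n|_{2^*} = |w_\varepsilon|_{2^*}$, while the change of variables $y = \tau_n(x)$ yields
\begin{equation*}
\int_{\mathbb{B}^N} a(x)\,|\widetilde{w}_n(x)|^p \,\mathrm{d}V_{\mathbb{B}^N}(x) = \int_{\mathbb{B}^N} a(\tau_n^{-1}(y))\,|w_\varepsilon(y)|^p \,\mathrm{d}V_{\mathbb{B}^N}(y).
\end{equation*}
Since $a(x) \to 1$ as $d(x,0) \to \infty$ and $d(\tau_n^{-1}(y),0) \to \infty$ pointwise in $y$, dominated convergence (with dominant $\|a\|_\infty |w_\varepsilon|^p \in L^1$) forces this integral to $|w_\varepsilon|_p^p$. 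The unique $t_n>0$ such that $t_n \widetilde{w}_n \in \mathcal{N}_\varepsilon$ then satisfies $t_n \to 1$, and the Nehari identity for $E_\varepsilon$ gives $E_\varepsilon(t_n \widetilde{w}_n) \to E_{\varepsilon,\infty}(w_\varepsilon) = m_\varepsilon$.

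For the reverse inequality, fix an arbitrary $u \in \mathcal{N}_\varepsilon$ and look for $s = s(u) > 0$ with $s u \in \mathcal{N}_{\varepsilon,\infty}$, i.e.\
\begin{equation*}
s^{p-2} |u|_p^p + \varepsilon s^{2^*-2} |u|_{2^*}^{2^*} = \|u\|_\lambda^2 = \int_{\mathbb{B}^N} a(x)|u|^p \,\mathrm{d}V_{\mathbb{B}^N} + \varepsilon |u|_{2^*}^{2^*}.
\end{equation*}
The left-hand side is strictly increasing in $s$ and, at $s=1$, dominates the right-hand side since $a \leq 1$; hence a unique $s \in (0,1]$ exists, with $s=1$ only when $\int(1-a)|u|^p = 0$. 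Writing both energies through their Nehari identities and using $s \leq 1$ (with $p>2$ and $p<2^*$),
\begin{equation*}
E_\varepsilon(u) = \Bigl(\tfrac{1}{2}-\tfrac{1}{p}\Bigr)\|u\|_\lambda^2 + \Bigl(\tfrac{1}{p}-\tfrac{1}{2^*}\Bigr)\varepsilon |u|_{2^*}^{2^*} \geq \Bigl(\tfrac{1}{2}-\tfrac{1}{p}\Bigr) s^2 \|u\|_\lambda^2 + \Bigl(\tfrac{1}{p}-\tfrac{1}{2^*}\Bigr)\varepsilon s^{2^*} |u|_{2^*}^{2^*} = E_{\varepsilon,\infty}(s u) \geq m_\varepsilon.
\end{equation*}
Taking the infimum over $u \in \mathcal{N}_\varepsilon$ gives $\inf_{\mathcal{N}_\varepsilon} E_\varepsilon \geq m_\varepsilon$, completing the proof of \eqref{4a}.

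For non-attainment, suppose $\widetilde{u} \in \mathcal{N}_\varepsilon$ realized the infimum. By Lagrange multipliers $\widetilde{u}$ would be a critical point of $E_\varepsilon$, hence a solution of \eqref{mainprob}; replacing $\widetilde{u}$ by $|\widetilde{u}|$ (which has the same energy and stays in $\mathcal{N}_\varepsilon$) and invoking the strong maximum principle, we may assume $\widetilde{u} > 0$ on $\mathbb{B}^N$. Since $\mu(\{a < 1\}) > 0$ by hypothesis, $\int(1-a)|\widetilde{u}|^p > 0$, so the $s = s(\widetilde{u})$ constructed above satisfies $s < 1$ strictly. But then the displayed chain of inequalities becomes strict, yielding $E_{\varepsilon,\infty}(s\widetilde{u}) < E_\varepsilon(\widetilde{u}) = m_\varepsilon$, which contradicts $s\widetilde{u} \in \mathcal{N}_{\varepsilon,\infty}$. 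The main obstacle is the first step: one must verify $t_n \to 1$ using only the soft hypothesis $a \to 1$ at infinity together with the $L^p$-integrability of $w_\varepsilon$ (guaranteed by $w_\varepsilon \in H^1(\mathbb{B}^N)$ and $p < 2^*$); the rescaling trick for the lower bound and the maximum-principle step for non-attainment are standard.
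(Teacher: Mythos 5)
Your proposal is correct and follows essentially the same route as the paper: the lower bound by projecting an arbitrary $u \in \mathcal{N}_\varepsilon$ onto $\mathcal{N}_{\varepsilon,\infty}$ with a scaling factor $s \le 1$ (using $a \le 1$), the upper bound by translating $w_\varepsilon$ to infinity and showing the normalization constants tend to $1$, and non-attainment by forcing a strict inequality in the comparison chain. Your justification that the strict inequality actually holds — passing to $|\widetilde{u}|$, noting a minimizer on $\mathcal{N}_\varepsilon$ is a solution, and invoking the strong maximum principle to guarantee $\int (1-a)|\widetilde{u}|^p > 0$ — is a point the paper leaves implicit, so this is a welcome refinement rather than a deviation.
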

\noi
First, note that when  $\varepsilon=0,$ \eqref{mainprob} becomes
\begin{equation*}
-\Delta_{\bn} u \, - \,  \lambda u \, = \, a(x) u^{p-1}  \text { in } \bn, \;\;u>0 \text { in } \bn,\;\;u \in H^1(\bn), \label{prob1} \tag{$P$}
\end{equation*}
and the corresponding energy functional is $E: H^1(\bn) \rightarrow \mathbb{R}$ defined by
\begin{equation*}
E(u)=\frac{1}{2} \int_{\bn}\left(|\nabla_{\bn} u|^2- \la u^2\right) \mathrm{~d} V_{\mathbb{B}^{N}}-\frac{1}{p} \int_{\bn} a(x)|u|^p \mathrm{~d} V_{\mathbb{B}^{N}},
\end{equation*}
and $\mathcal{N}$ denotes the corresponding Nehari Manifold.
%%

\begin{comment}
In this section, we construct and prove the tools for the proof of Theorem $1.3$. We assume $\bn \not\equiv \mathbb{R}^N$ or $a(x) \not \equiv 1$. First, we prove that no ground-state solution can exist, then we show that, despite the difficulties due to the little information about the solutions of $\eqref{limprob}$, local compactness can be recovered in some interval of the functional values.
\end{comment}

\begin{proof}[Proof of Proposition \ref{prop2ab}]
    Let $u \in \mathcal{N}_{\varepsilon}$ and $t_u \in \mathbb{R}$ be such that $t_u u \in \mathcal{N}_{\varepsilon, \infty}$. Using the assumption that $a(x) \leq 1$ a.e. in $\mathbb{B}^N$, we have
\begin{equation*}
m_{\varepsilon} \leq E_{\varepsilon, \infty}\left(t_u u\right) \leq E_{\varepsilon}\left(t_u u\right) \leq E_{\varepsilon}(u),
\end{equation*}
which in turn implies, $\inf_{\mathcal{N}_{\varepsilon}} E_{\varepsilon} \geq m_{\varepsilon}$. Now we shall exhibit a sequence $\{ v_n \}_n \in 
\mathcal{N}_{\varepsilon}$ such that $E_{\varepsilon}\left(v_n\right) \rightarrow m_{\varepsilon}$ as $n \rightarrow \infty.$ This will prove our desired result. 

%%%%%%

\medskip

 Define $v_{n}(x)=t_{n}w_{\varepsilon}(\tau_{n}(x))$ where $\tau_n$ is the hyperbolic translation with $\tau_n(0)=b_n$, and $b_n \in \mathbb{B}^N$ such that $b_n \rightarrow \infty$,  $w_{\varepsilon}$ is the minimizing function (solution) established in Theorem~\ref{limthm} and $t_n>0$ is such that $v_{n}(x)=t_{n}w_{\varepsilon}(\tau_{n}(x)) \in \mathcal{N}_{\varepsilon}$.\\
Now $v_{n} \in \mathcal{N}_{\varepsilon}$ gives
\begin{equation}
    \left\|w_{\varepsilon}(\tau_{n}(\cdot))\right\|_{\la}^2-t_{n}^{p-2}|a ^\frac{1}{p}w_{\varepsilon}(\tau_{n}(\cdot))|_p^p-\varepsilon t_{n}^{2^*-2}\left|w_{\varepsilon}(\tau_{n}(\cdot))\right|_{2^*}^{2^*}=0, \label{2aa}
\end{equation}
that is
\begin{equation*}
t_{n}^{p-2}|a ^\frac{1}{p}w_{\varepsilon}(\tau_{n}(\cdot))|_p^p+\varepsilon t_{n}^{2^*-2}\left|w_{\varepsilon}\right|_{2^*}^{2^*} =\left\|w_{\varepsilon}\right\|_{\lambda}^2.
\end{equation*}
Consequently, $\left\{t_n\right\}_n$ is bounded and, up to a subsequence, $t_n\rightarrow t$. Letting $n\rightarrow \infty$ in \eqref{2aa} yields
\begin{equation*}
\left\|w_{\varepsilon}\right\|_\lambda^2-t^{p-2}\left|w_{\varepsilon}\right|_p^p-\varepsilon t^{2^*-2}\left|w_{\varepsilon}\right|_{2^*}^{2^*}=0,
\end{equation*}
giving us $t w_{\varepsilon} \in \mathcal{N}_{\varepsilon, \infty}$. But $w_{\varepsilon} \in \mathcal{N}_{\varepsilon, \infty}$, therefore, $t=1$. Then it is not difficult to see that $E_{\varepsilon}\left(v_n\right) \rightarrow$ $E_{\varepsilon, \infty}\left(w_{\varepsilon}\right)=m_{\varepsilon}$ and we can conclude $\inf _{\mathcal{N}_{\varepsilon}} E_{\varepsilon} \leq m_{\varepsilon}$.

%%%%%%

\medskip

Furthermore, we want to show that $m_{\varepsilon}$ is not attained in $\mathcal{N}_{\varepsilon}$. We argue by contradiction, let $u \in \mathcal{N}_{\varepsilon}$ be such that $E_{\varepsilon}(u)=m_{\varepsilon}$.
Let $t>0$ be such that $t u \in \mathcal{N}_{\varepsilon, \infty}$, then
\begin{equation*}
m_{\varepsilon}=E_{\varepsilon}(u) \geq E_{\varepsilon}(t u)>E_{\varepsilon, \infty}(t u) \geq m_{\varepsilon},
\end{equation*}
which leads to a contradiction, and hence the proof is complete.
\end{proof}

%%%%%%%%%%%%%%

\medskip

\subsection{Bound-state solutions}

Proposition~\ref{prop2ab} tells us that the problem~\eqref{mainprob} for $a(x) \leq 1$ does not admit a ground-state solution. This tempts us to look for high-energy solutions in the spirit of Bahri-Li. This idea has already been exploited for (purely) subcritical problems in the hyperbolic space, see e.g., \cite{DDS, DDS1}. It hinges on delicate interaction estimates of two hyperbolic bubbles. Let us first recall some of the well-known facts from the previous papers. 

%%%%

\medskip

We recall the following standard compactness result in the subcritical case as a consequence of (\cite[Proposition~3.1]{DDS}) for $a(x)$ satisfying the conditions mentioned in Theorem \ref{aless}.
\begin{prop} \label{pscric}
    Assume $\left\{v_n\right\}_n$ to be a $(P S)_c$-sequence of $E$ for $ c \in (m, 2 m)$, then $\left\{v_n\right\}_n$ is relatively compact and, up to a subsequence, converges to a nonzero function $\bar{v} \in H^1(\bn)$ such that $E(\bar{v}) \in(m, 2 m)$.
\end{prop}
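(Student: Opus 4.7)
The plan is to invoke a Struwe-type profile decomposition for Palais-Smale sequences of $E$ on $H^1(\bn)$, and then use the energy level restriction $c\in(m,2m)$ to rule out any escaping hyperbolic bubble. The starting point is to show that $\{v_n\}$ is bounded in $H^1(\bn)$: combining $E(v_n)\to c$ with $E'(v_n)\to 0$ in the standard way (testing against $v_n$ and using $p>2$ together with the sharp Poincar\'e inequality) gives $\|v_n\|_\la\le C$. Passing to a subsequence, $v_n\rightharpoonup \bar v$ weakly in $H^1(\bn)$, $v_n\to\bar v$ strongly in $L^p_{\mathrm{loc}}(\bn)$ and a.e.; testing $E'(v_n)\to 0$ against any $\varphi\in C_c^\infty(\bn)$ shows that $\bar v$ is a weak solution of \eqref{prob1}.

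Next, I would apply the subcritical Palais-Smale decomposition \cite[Proposition~3.1]{DDS} (which itself relies on \cite[Theorem~3.3]{BS} adapted to the presence of the potential $a(x)\to 1$ at infinity, using the decay hypothesis \eqref{acond}). It yields an integer $k\ge 0$, nontrivial solutions $w_1,\dots,w_k\in H^1(\bn)$ of the limit problem \eqref{infprob}, and hyperbolic isometries $\tau_n^j$ with $d(\tau_n^j(0),0)\to\infty$ as $n\to\infty$, such that
\begin{equation*}
v_n=\bar v+\sum_{j=1}^{k}w_j\circ\tau_n^j+o(1)\quad\text{in }H^1(\bn),
\end{equation*}
and, correspondingly,
\begin{equation*}
c=\lim_{n\to\infty}E(v_n)=E(\bar v)+\sum_{j=1}^{k}E_\infty(w_j).
\end{equation*}
The key point that makes the $w_j$'s solve \eqref{infprob} rather than \eqref{prob1} is that $a(\tau_n^j(x))\to 1$ locally uniformly because $\tau_n^j(0)\to\infty$ and $a(x)\to 1$ at infinity; the exponential decay \eqref{acond} makes the error from $a\neq 1$ summable in the profile bookkeeping.

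Now comes the energy-counting argument, which is the main (and in fact rather short) step. Every nontrivial solution $w_j$ of \eqref{infprob} lies on $\mathcal N_\infty$, so $E_\infty(w_j)\ge m$. Likewise, by Proposition~\ref{prop2ab} applied with $\varepsilon=0$, every nonzero solution $\bar v$ of \eqref{prob1} satisfies $E(\bar v)\ge \inf_{\mathcal N}E=m$, while $\bar v\equiv 0$ gives $E(\bar v)=0$. Hence
\begin{equation*}
c\;\ge\;E(\bar v)+km,\qquad\text{with }E(\bar v)\in\{0\}\cup[m,\infty).
\end{equation*}
If $k\ge 2$, then $c\ge 2m$, contradicting $c<2m$. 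If $k=1$ and $\bar v\equiv 0$, then $c=m$, contradicting $c>m$. If $k=1$ and $\bar v\not\equiv 0$, then $c\ge m+m=2m$, again a contradiction. Thus $k=0$, which gives $v_n\to\bar v$ strongly in $H^1(\bn)$ and $E(\bar v)=c\in(m,2m)$; in particular $\bar v\not\equiv 0$.

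The main obstacle is the very first step: making sure that the profile decomposition of \cite{DDS,BS} genuinely applies here, i.e.\ that the decay \eqref{acond} is strong enough to prevent the appearance of profiles adapted to the perturbation $a(x)-1$ and to guarantee that the bubbles solve the limit equation \eqref{infprob}. Once the decomposition is in place, the energy bookkeeping against $(m,2m)$ is essentially algebraic and forces $k=0$, whence strong convergence and $\bar v\neq 0$.
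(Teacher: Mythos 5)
The paper does not actually prove this proposition: it is quoted as a direct consequence of \cite[Proposition~3.1]{DDS}, so there is no in-paper argument to compare against. Your reconstruction via the subcritical profile decomposition of \cite{DDS,BS} followed by energy bookkeeping is exactly the standard route behind that cited result, and most of it is sound: boundedness of the sequence, identification of the weak limit as a solution of \eqref{prob1}, the fact that escaping profiles solve \eqref{infprob} because $a(x)\to 1$ at infinity, and the exclusion of the cases $k\ge 2$ and ($k=1$, $\bar v\not\equiv 0$), the latter correctly using $\inf_{\mathcal{N}}E=m$ from Proposition~\ref{prop2ab} with $\varepsilon=0$.

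There is, however, one genuine gap: in the case $k=1$, $\bar v\equiv 0$ you assert $c=m$. What the decomposition actually gives is $c=E_\infty(w_1)$ for some nontrivial solution $w_1$ of \eqref{infprob}, and the only bound you invoke is $E_\infty(w_1)\ge m$; this does not contradict $c\in(m,2m)$, since a priori the limit problem could possess a solution with energy strictly between $m$ and $2m$, in which case a single escaping bubble would destroy compactness at exactly such a level. To close this case you must add that the energy of every nontrivial solution of \eqref{infprob} lies in $\{m\}\cup[2m,\infty)$: one-signed solutions have energy exactly $m$ by the Mancini--Sandeep uniqueness result \cite{MS} (this is one place where the restriction \eqref{lambda} on $\lambda$ is used), while a sign-changing solution $w$ satisfies $w^{+},w^{-}\in\mathcal{N}_{\infty}$ and hence $E_\infty(w)=E_\infty(w^{+})+E_\infty(w^{-})\ge 2m$. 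With that classification inserted, the case $k=1$, $\bar v\equiv 0$ is excluded as well and the rest of your argument goes through. A minor further remark: the exponential decay \eqref{acond} is not what makes the decomposition applicable (the convergence $a(x)\to1$ at infinity already forces the escaping profiles to solve \eqref{infprob}); \eqref{acond} is needed later, for the interaction estimates of Proposition~\ref{enerest}.
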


\medskip 

The above proposition led us to the following compactness result corresponding to the critical perturbation problem.

\begin{prop} \label{propps}
    For every $\delta \in(0, m / 2)$ there corresponds $\varepsilon_\delta>0$ satisfying the following property:  $\forall \varepsilon \in\left(0, \varepsilon_\delta\right), \forall c \in(m+\delta, 2 m-\delta)$, if $\left\{u_n\right\}_n$ is a $(P S)_{c} -$ sequence of $E_{\varepsilon}$ constrained on $\mathcal{N}_{\varepsilon}$, then $u_n \rightharpoonup \bar{u} \not\equiv 0$ weakly in $H^1(\bn)$. Furthermore, $\bar{u}$ is a critical point of $E_{\varepsilon}$ on $\mathcal{N}_{\varepsilon}$ and $E_{\varepsilon}(\bar{u}) \leq c$.
\end{prop}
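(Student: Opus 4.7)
The strategy is a contradiction argument built on the Palais--Smale decomposition of \cite[Theorem~3.3]{BS}, combined with the asymptotics $m_\varepsilon\to m$ of Lemma~\ref{asym} and the subcritical compactness of Proposition~\ref{pscric}. The bookkeeping part of the statement---that $\bar u$ is a critical point with $E_\varepsilon(\bar u)\le c$---is straightforward once $\bar u\not\equiv 0$ is in hand. I begin with the preliminaries: boundedness of $\{u_n\}$ in $H^1(\bn)$ follows from $u_n\in\mathcal N_\varepsilon$ and the identity $E_\varepsilon(u_n)=(\tfrac12-\tfrac1p)\|u_n\|_\lambda^2+(\tfrac1p-\tfrac1{2^*})\varepsilon|u_n|_{2^*}^{2^*}\to c$; a Lagrange multiplier computation as in Theorem~\ref{limthm}, using $G'(u)[u]\le -c<0$ on $\mathcal N_\varepsilon$, shows the multipliers vanish, so $\{u_n\}$ is in fact an unconstrained $(PS)_c$-sequence. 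Extract $u_n\rightharpoonup\bar u$; standard tests of $E_\varepsilon'(u_n)\to 0$ against $C_c^\infty(\bn)$ functions show that $\bar u$ solves $(P_\varepsilon)$ weakly, and Brezis--Lieb yields $E_\varepsilon(\bar u)\le c$.

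For $\bar u\not\equiv 0$, argue by contradiction and suppose $\bar u\equiv 0$. Apply the BS decomposition to obtain
\[
u_n=\sum_{j=1}^{n_1}U_j\circ\tau_n^j+\sum_{k=1}^{n_2}w_n^k+o(1),\qquad c=\sum_{j=1}^{n_1}E_{\varepsilon,\infty}(U_j)+\sum_{k=1}^{n_2}J_{\varepsilon,\infty}(V_k)+o(1),
\]
with nontrivial $U_j$ solving $(P_{\varepsilon,\infty})$, bubbles $w_n^k$ associated with nontrivial $V_k$ solving $(CP_\infty)$, and $d(\tau_n^j(0),0)\to\infty$. Since each bubble costs at least $\tfrac1N S^{N/2}\varepsilon^{-(N-2)/2}$ and each soliton at least $m_\varepsilon$, I would select $\varepsilon_\delta>0$ so small that for $\varepsilon\in(0,\varepsilon_\delta)$ the bubble threshold $\tfrac1N S^{N/2}\varepsilon^{-(N-2)/2}$ exceeds $2m-\delta$, precluding bubbles ($n_2=0$), and Lemma~\ref{asym} yields $m_\varepsilon>m-\delta/2$, so $2m_\varepsilon>2m-\delta>c$ forces $n_1\le 1$. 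The subcase $n_1=n_2=0$ gives $c=0$, contradicting $c>m+\delta$.

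The remaining case $n_1=1$, $n_2=0$---that is, $u_n=U\circ\tau_n+o(1)$ with $d(\tau_n(0),0)\to\infty$ and $U$ a solution of $(P_{\varepsilon,\infty})$---is the main obstacle. Since $a(\tau_n^{-1}(\cdot))\to 1$ pointwise, both the Nehari identity and the energy of $u_n$ limit exactly to those of $U$ on $\mathcal N_{\varepsilon,\infty}$, producing no direct contradiction. My plan to close this case is to interpret $\{u_n\}$ as an $\varepsilon$-perturbation of a Palais--Smale sequence for the subcritical functional $E$: boundedness of $|u_n|_{2^*}$ in $L^{2^*}(\bn)$ gives $E(u_n)=c+O(\varepsilon)$, which lies in $(m,2m)$ for $\varepsilon$ small, together with $\|E'(u_n)\|_{H^{-1}}=o(1)+O(\varepsilon)$. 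An $\varepsilon$-quantitative variant of Proposition~\ref{pscric}, exploiting $a(x)\le 1$ and the sharpness of the ground-state level $m$ from \cite{MS}, then forces relative compactness of $\{u_n\}$, contradicting $u_n\rightharpoonup 0$. The genuinely hard technical step is making this approximate-PS reduction rigorous and tying the $\varepsilon$-dependence of the perturbative compactness threshold to the width $\delta$.
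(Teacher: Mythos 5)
There is a genuine gap, on two levels. First, the profile decomposition you invoke is not available: you write $u_n=\sum_j U_j\circ\tau_n^j+\sum_k w_n^k+o(1)$ with nontrivial $U_j$ solving \eqref{limprob}, attributing this to \cite[Theorem~3.3]{BS}. That theorem treats a single power nonlinearity (purely subcritical or purely critical); the paper states explicitly in the introduction that the Palais--Smale decomposition for the mixed problem \eqref{limprob} ``is yet to be discovered,'' and in Proposition~\ref{propa1} it only ever applies \cite{BS} to the functional $I_\varepsilon$ after the subcritical term has been killed. So the very first step of your contradiction argument rests on an unproved decomposition (and one would additionally have to handle the non-autonomous coefficient $a(x)$ in it). Second, and more decisively, even granting the decomposition you correctly identify that the case $n_1=1$, $n_2=0$ carries all the difficulty --- and your plan for it does not close. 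For a \emph{fixed} $\varepsilon$ your sequence satisfies only $\|E'(u_n)\|_{H^{-1}}=o(1)+O(\varepsilon)$, which does not tend to zero as $n\to\infty$, so it is not a Palais--Smale sequence for $E$ and Proposition~\ref{pscric} cannot be applied; the ``$\varepsilon$-quantitative variant'' you appeal to is precisely the content of the proposition being proved, so the argument is circular at that point. You flag this as the ``genuinely hard technical step,'' but it is not a technicality --- it is the whole theorem.

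The paper's proof avoids both problems by a different logical setup: it negates the statement, obtaining $\bar\delta$, levels $c_n\in(m+\bar\delta,2m-\bar\delta)$, parameters $\varepsilon_n\to 0$, and for each $n$ a $(PS)_{c_n}$-sequence of $E_{\varepsilon_n}$ with weak limit zero; a diagonal extraction then produces a single bounded sequence $\{v_n\}$ with $E_{\varepsilon_n}(v_n)\to\bar c\in(m,2m)$, $E'_{\varepsilon_n}(v_n)\to 0$ and $v_n\to 0$ in $L^p_{\mathrm{loc}}$. Because $\varepsilon_n\to 0$ along the diagonal, the estimates $E(v_n)=E_{\varepsilon_n}(v_n)+\tfrac{\varepsilon_n}{2^*}|v_n|_{2^*}^{2^*}$ and $\|E'(v_n)\|\le\|E'_{\varepsilon_n}(v_n)\|+C\varepsilon_n\|v_n\|_\lambda^{2^*-1}$ make $\{v_n\}$ a \emph{genuine} $(PS)_{\bar c}$-sequence for the subcritical functional $E$, to which Proposition~\ref{pscric} applies and yields strong convergence to a nonzero limit --- contradicting $v_n\to 0$ in $L^p_{\mathrm{loc}}$. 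No profile decomposition for the perturbed problem is needed. If you want to salvage your write-up, replace the decomposition step entirely by this contradiction-plus-diagonalization scheme; your treatment of the boundedness, the Lagrange multipliers, and the final assertion $E_\varepsilon(\bar u)\le c$ is fine and matches the paper.
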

\begin{proof}
    First of all, note that every $(\mathrm{PS})_c$ - sequence for the constrained functional is also a $(\mathrm{PS})_c$ - sequence for the free functional, and its weak limit is a critical point which can be seen by following similar arguments as in Proposition~\ref{propa1}. Also, (PS) sequence is bounded in $H^1(\bn)$, so it ensures the existence of a weak limit in $H^1\left(\bn \right)$. \\
    Now to prove the Proposition in hand, we argue by contradiction. We can assume that there exist $\bar{\delta} \in(0, m / 2)$, a sequence $\left\{c_n\right\}_n$ in $(m+\bar{\delta}, 2 m-\bar{\delta})$, a sequence $\left\{\varepsilon_n\right\}_n$ in $(0,+\infty)$, with $\varepsilon_n \rightarrow 0$, and, for every $n \in \mathbb{N}$, a sequence $\left\{u_k^n\right\}_k$ in $H^1(\bn)$ such that
\begin{equation*}
\begin{gathered}
E_{\varepsilon_n}\left(u_k^n\right) \stackrel{k \rightarrow \infty}{\longrightarrow} c_n, \quad E_{\varepsilon_n}^{\prime}\left(u_k^n\right) \stackrel{k \rightarrow \infty}{\longrightarrow} 0, \\
u_k^n \stackrel{k \rightarrow \infty}{\rightharpoonup} 0 \quad \text { weakly in } H^1(\bn) .
\end{gathered}
\end{equation*}
Additionally, as $p < 2^{\star},$ we can also assume 
\begin{equation*}
u_k^n \stackrel{k \rightarrow \infty}{\longrightarrow} 0 \quad \text { in } L_{\text {loc}}^p\left(\mathbb{B}^N\right) .
\end{equation*}
Up to a subsequence, $c_n \rightarrow \bar{c} \in[m+\bar{\delta}, 2 m-\bar{\delta}]$, and by using a diagonalization argument, we construct the sequence $\left\{v_n\right\}_n:=\left\{u_{k_n}^n\right\}_n$ that verifies
\begin{equation}
E_{\varepsilon_n}\left(v_n\right) \stackrel{n \rightarrow \infty}{\longrightarrow} \bar{c}, \quad E_{\varepsilon_n}^{\prime}\left(v_n\right) \stackrel{n \rightarrow \infty}{\longrightarrow} 0, \quad v_n \stackrel{n \rightarrow \infty}{\longrightarrow} 0 \quad \text { in } L_{\text {loc }}^p\left(\mathbb{B}^N\right) . \label{2b}
\end{equation}
Moreover, the following equality guarantees $\left\{\left\|v_n\right\|_\la\right\}_n$ is bounded
\begin{equation}
E_{\varepsilon_n}\left(v_n\right)=\left(\frac{1}{2}-\frac{1}{p}\right)\left\|v_n\right\|_\la^2+\left(\frac{1}{p}-\frac{1}{2^*}\right) \varepsilon_n\left|v_n\right|_{2^*}^{2^*}=\bar{c}+o(1). \label{2c}
\end{equation}
Thus we can deduce that
\begin{equation*}
\begin{gathered}
E\left(v_n\right)=E_{\varepsilon_n}\left(v_n\right)+\frac{\varepsilon_n}{2^*} \int_{\bn}\left|v_n\right|^{2^*} \mathrm{d} x \stackrel{n \rightarrow \infty}{\longrightarrow} \bar{c} \\
\left\|E^{\prime}\left(v_n\right)\right\|_{H^{-1}(\bn)} \leq\left\|E_{\varepsilon_n}^{\prime}\left(v_n\right)\right\|_{H^{-1}(\bn)}+C \varepsilon_n\left\|v_n\right\|_{\lambda}^{2^*-1} \stackrel{n \rightarrow \infty}{\longrightarrow} 0,
\end{gathered}
\end{equation*}
and the above steps imply $\left\{v_n\right\}_n$ is a $(P S)_{\bar{c}}$ - sequence of $E$, with $\bar{c} \in(m, 2 m)$. Then, according to Proposition \ref{pscric}$, \bar{v} \in H^1(\bn), \bar{v} \not\equiv 0$, exists such that $v_n \rightarrow \bar{v}$, in contrast to \eqref{2b}.

To conclude, if $\left\{u_n\right\}_n$ is a $(\mathrm{PS})_c$ - sequence for $E_{\varepsilon}$ constrained on $\mathcal{N}_{\varepsilon}$, and $u_n \rightharpoonup \bar{u}$, then replacing $\varepsilon_n$ with $\varepsilon$, and $\bar{c}$ with $c$ in \eqref{2c}, we can obtain $E_{\varepsilon}(\bar{u}) \leq c$.
\end{proof}

\medskip

%%%%%%%%%%%%%%%%%%%%%%%%%%%%%%%%%%%%%%%%%%%%%%%%%%%%%%%%%%%%%%%%

\subsection{Energy Estimates}
In this subsection, we recall and establish some energy estimates for interacting hyperbolic bubbles. In addition, these functions are also used to analyse the sublevels of the functional $E_{\varepsilon}$. Subsequently, we construct barycenter-type maps to study a few properties of these sublevels.

%%%
\medskip

We first recollect the following energy estimate corresponding to $\eqref{prob1}$, purely subcritical problem (\cite[Lemma~4.2]{DDS}):

\begin{prop} \label{enerest}
Let $a(x)$ satisfy the conditions of Theorem \ref{aless}, and let $w$ be the unique radial solution of \eqref{infprob}. Then, there exists a large number $R_{0}$, such that for any $R \geq R_{0}$, and for any $x_{1}, x_{2}$ satisfying $$d(x_{1},0) \geq R^{\alpha},\; d(x_{2},0)\geq R^{\alpha},\; R^{\alpha^{\prime}} \leq d\left(x_{1},x_{2}\right) \leq R^{\alpha^{\prime}-\alpha} \min \left\{d(x_{1},0),d(x_{2},0)\right\},$$where $\alpha>\alpha^{\prime}>1$, it holds
\begin{equation}
J\left(t u_{1}+(1-t) u_{2}\right)<S_{2, \lambda}, \label{3h}
\end{equation}
where $0 \leq t \leq 1,$ and $ u_{i}=w\left(\tau_{-x_{i}}(\cdot)\right), i=1,2$,
$J, J_\infty: H^1(\bn)\rightarrow \mathbb R$ are defined as 
\begin{equation*}
J(u):=\frac{\|u\|_{\lambda}^{2}}{\left(\int_{\mathbb{B}^{N}} a(x)|u(x)|^{p} \mathrm{~d} V_{\mathbb{B}^{N}}(x)\right)^{\frac{2}{p}}}, J_{\infty}(u):=\frac{\|u\|_{\lambda}^{2}}{\left(\int_{\mathbb{B}^{N}}|u(x)|^{p} \mathrm{~d} V_{\mathbb{B}^{N}}(x)\right)^{\frac{2}{p}}},
\end{equation*}
and the energy levels as
\begin{equation*}
S_{1, \lambda}:=\inf _{u \in H^{1}(\mathbb{B}^{N}) \backslash\{0\}} J_{\infty}(u), \quad S_{2, \lambda}:= 2^{\frac{p-2}{p}} S_{1, \lambda}.
\end{equation*}
\end{prop}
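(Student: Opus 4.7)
The plan is to expand the Rayleigh quotient $J(v_t)$, with $v_t := t u_1 + (1-t)u_2$, around the two-bubble configuration and to exploit the strict positivity of the bubble interaction together with the exponential decay of the radial ground state $w$. By isometric invariance one has $\|u_i\|_{\lambda}^2 = \|w\|_{\lambda}^2 =: A$, and since $w$ solves \eqref{infprob}, the Nehari identity gives $A = \int_{\bn} w^p\, dV_{\bn}$, hence $S_{1,\lambda} = A^{1-2/p}$. Testing the Euler--Lagrange equation for $u_i$ against $u_j$ yields the key identity
\[
\varepsilon_{12} := \langle u_1, u_2 \rangle_{\lambda} = \int_{\bn} u_1^{p-1} u_2 \, dV_{\bn} = \int_{\bn} u_1 u_2^{p-1} \, dV_{\bn} > 0.
\]
From the well-known exponential decay of $w$, the separation $d(x_1,x_2) \geq R^{\alpha'}$ forces $\varepsilon_{12} \lesssim e^{-c(\lambda,p)\, d(x_1,x_2)}$ as $R \to \infty$, while the centering $d(x_i,0) \geq R^{\alpha}$ with $\alpha > \alpha'$ forces the perturbation coming from $a(x) \neq 1$, controlled via \eqref{acond}, to be of strictly higher exponential order than $\varepsilon_{12}$.

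Next, I would compute explicitly
\[
\|v_t\|_{\lambda}^2 = \bigl(t^2 + (1-t)^2\bigr) A + 2 t(1-t)\, \varepsilon_{12},
\]
and expand the denominator by carefully splitting $\bn$ into regions in which one of the two bubbles dominates the other. In each such region the pointwise expansion $(tu_1 + (1-t)u_2)^p = t^p u_1^p + p\, t^{p-1}(1-t)\, u_1^{p-1} u_2 + o(u_1^{p-1} u_2)$ (and its symmetric counterpart) applies, while the overlap region contributes only at higher order thanks to the separation $d(x_1,x_2) \geq R^{\alpha'}$. Combining with $|a(x) - 1| \leq C e^{-\delta d(x,0)}$ and $d(x_i,0) \geq R^{\alpha}$, one obtains
\[
\int_{\bn} a(x)\, v_t^p\, dV_{\bn} = \bigl(t^p + (1-t)^p\bigr) A + p \bigl[t^{p-1}(1-t) + t(1-t)^{p-1}\bigr]\varepsilon_{12} + o(\varepsilon_{12}).
\]

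Set $F(t) := (t^2 + (1-t)^2)/(t^p + (1-t)^p)^{2/p}$. A direct one-variable calculation gives $F(t) \leq 2^{(p-2)/p}$ on $[0,1]$ with equality only at $t = 1/2$, so the unperturbed leading-order ratio equals $S_{1,\lambda}\, F(t) \leq S_{2,\lambda}$. The decisive computation is at the critical value $t = 1/2$: with $x := \varepsilon_{12}/A$ the expansions above yield
\[
J(v_{1/2}) = S_{2,\lambda}\, \frac{1 + x}{(1 + p\,x)^{2/p}}\bigl(1 + o(1)\bigr) = S_{2,\lambda}\bigl(1 - x + O(x^2)\bigr),
\]
using $(1 + p x)^{2/p} = 1 + 2x + O(x^2)$. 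Since $\varepsilon_{12} > 0$, this is strictly below $S_{2,\lambda}$ for $R$ large. For $t$ bounded away from $1/2$ the gap $F(t) < 2^{(p-2)/p}$ is strict and uniform, and the $O(\varepsilon_{12})$ corrections are absorbed; thus, by compactness in $t \in [0,1]$, one may choose $R_0$ so large that \eqref{3h} holds for every $t \in [0,1]$ and every admissible pair $x_1, x_2$.

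The main obstacle is the sign of the correction at $t = 1/2$: both the numerator and the denominator of $J(v_{1/2})$ gain positive contributions of order $\varepsilon_{12}$, and the strict inequality relies on the denominator growing strictly faster once the $2/p$ power is taken. This in turn reflects the fact that the nonlinear cross terms in $(u_1+u_2)^p$ (with coefficient $p$) outweigh, after the $2/p$-root, the linear cross term in $\|u_1+u_2\|_{\lambda}^2$ (with coefficient $2$) whenever $p > 2$. Finally, balancing the gain $\varepsilon_{12} \sim e^{-c R^{\alpha'}}$ against the potential loss $\lesssim e^{-\delta R^{\alpha}}$ coming from \eqref{acond} is what fixes the hierarchy $\alpha > \alpha' > 1$ and the threshold $R_0$, as in \cite[Lemma~4.2]{DDS}.
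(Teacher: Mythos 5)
The paper does not actually prove this proposition: it is recalled verbatim from \cite[Lemma~4.2]{DDS}, so there is no in-paper argument to compare yours against. Your sketch reconstructs what is essentially the standard Bahri--Li interaction estimate, and its architecture is correct: the identity $\langle u_1,u_2\rangle_\lambda=\int u_1^{p-1}u_2\,\mathrm{d}V_{\bn}=\int u_1u_2^{p-1}\,\mathrm{d}V_{\bn}=:\varepsilon_{12}>0$, the elementary bound $F(t)\le 2^{(p-2)/p}$ with equality only at $t=1/2$, and the observation that after the $2/p$-th root the cross term in the denominator (coefficient $p$) beats the cross term in the numerator (coefficient $2$) are exactly the right ingredients; your computation $J(v_{1/2})=S_{2,\lambda}(1-x+o(x))$ checks out, and in fact the first-order interaction correction is non-positive for every $t\in(0,1)$, not just near $t=1/2$.

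Three points need tightening before this is a proof. First, the hierarchy at the end is stated incorrectly: to dominate the loss coming from \eqref{acond} you need a \emph{lower} bound on $\varepsilon_{12}$, and that lower bound comes from the \emph{upper} bound $d(x_1,x_2)\le R^{\alpha'-\alpha}\min_i d(x_i,0)$ in the hypothesis, not from $d(x_1,x_2)\ge R^{\alpha'}$. The correct comparison is $\varepsilon_{12}\gtrsim e^{-c\,R^{\alpha'-\alpha}\min_i d(x_i,0)}$ versus $\int(1-a)v_t^p\,\mathrm{d}V_{\bn}\lesssim e^{-\delta'\min_i d(x_i,0)}$, which the interaction wins for $R$ large because $cR^{\alpha'-\alpha}\to 0$; your comparison of $e^{-cR^{\alpha'}}$ with $e^{-\delta R^{\alpha}}$ only covers the case $d(x_i,0)\approx R^{\alpha}$. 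Second, the claim that the Taylor remainder in $\int(tu_1+(1-t)u_2)^p$ is $o(\varepsilon_{12})$ with the \emph{sharp} coefficient $p$ on the cross terms is the technical heart and cannot be waved through: a crude convexity bound loses a factor $2$ there, and with coefficient $p/2$ the first-order terms at $t=1/2$ cancel exactly and the argument collapses. Establishing $o(\varepsilon_{12})$ requires the two-sided exponential decay of $w$ from \cite{MS} and extra care when $2<p<3$, where the second-order pointwise term $u_1^{p-2}u_2^2$ is not controlled by a naive expansion. Third, the "$(1+o(1))$" in your key display must be "$(1+o(x))$"; a multiplicative error that is merely $o(1)$ would swamp the gain $-x$. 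All three are repairable along the lines of \cite[Lemma~4.2]{DDS}, but as written they are gaps.
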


%%%

\medskip

With the above proposition in mind, let us introduce some notations. 
Fix $x_{1} \in \bn$ such that $x_{1}$ satisfies the condition in Proposition \ref{enerest}. Precisely, choose $x_{1}$ such that $d(x_{1},0) = 2 R_{0}^\alpha$. Also, for all the notations as in the above proposition fix $R'=R_{0}^{\alpha^{\prime}}$.\\
\par
Now set $\Sigma=\partial B_{R^{'}}\left(x_1\right)$. Further, for any $R>0$ as in the above Proposition \ref{enerest}, define the map $\psi_R:[0,1] \times \Sigma \longrightarrow H^1(\bn)$ by
\begin{equation*}
\psi_R[s, y](x)=(1-s) w\left(\tau_{-x_{1}}(x)\right)+s w\left(\tau_{-y}(x)\right),
\end{equation*}
where $w$ is the ground-state solution of \eqref{infprob}. Further, let  $t_{R, s, y},t_{R, s, y}^{\prime}>0$ such that $    t_{R, s, y} \psi_R[s, y] \in \mathcal{N}_{\varepsilon}$ and $t_{R, s, y}^{\prime} \psi_R[s, y] \in \mathcal{N}$.\\
\par
We can deduce an energy estimate using Proposition \ref{enerest} in our case here.  In particular, we have the following proposition.
\begin{lem} \label{enerlem}
 There exists $\bar{R}>0$ and $\mathcal{A} \in(m, 2 m)$ such that for any $R>\bar{R}$ and for any $\varepsilon>0$
\begin{equation*}
\mathcal{A}_{\varepsilon, R}=\max \left\{E_{\varepsilon}\left(    t_{R, s, y} \psi_R[s, y]\right): s \in[0,1], y \in \Sigma\right\}<\mathcal{A}<2 m .
\end{equation*}
\end{lem}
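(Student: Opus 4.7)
The plan is to transfer the computation of $E_\varepsilon$ at $t_{R,s,y}\psi_R[s,y]\in\mathcal N_\varepsilon$ to the purely subcritical Rayleigh quotient $J$ and then quote Proposition~\ref{enerest}. The elementary observation $E_\varepsilon(u) = E(u) - \tfrac{\varepsilon}{2^*}\int_{\bn}|u|^{2^*}\,\mathrm{d}V_{\bn}\le E(u)$, valid for every $u\in H^1(\bn)$ and $\varepsilon\ge 0$, combined with the fact that $t_{R,s,y}\psi_R[s,y]$ realises $\max_{t>0}E_\varepsilon(t\psi_R[s,y])$, yields
\[
E_\varepsilon(t_{R,s,y}\psi_R[s,y]) \le \max_{t>0} E(t\psi_R[s,y]) = E(t'_{R,s,y}\psi_R[s,y]).
\]
A standard maximisation on $\mathcal N$ (using $\|t'v\|_\lambda^2=\int_{\bn}a|t'v|^p\,\mathrm{d}V_{\bn}$) then gives the identity
\[
E(t' v) = \left(\tfrac{1}{2}-\tfrac{1}{p}\right) J(v)^{p/(p-2)}.
\]

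Next, I would apply this with $v=\psi_R[s,y]=(1-s)u_1+su_2$, recognising that $\psi_R[s,y]$ is exactly a convex combination of the form $tu_1+(1-t)u_2$ (with $t=1-s$) treated in Proposition~\ref{enerest}. Choosing $\bar R$ large enough that the distance hypotheses on $x_1$ and $x_2=y\in\Sigma$ are verified for every $R>\bar R$, Proposition~\ref{enerest} delivers $J(\psi_R[s,y])<S_{2,\lambda}$ pointwise in $(s,y)\in[0,1]\times\Sigma$. Since $m=E_\infty(w)=\left(\tfrac{1}{2}-\tfrac{1}{p}\right)S_{1,\lambda}^{p/(p-2)}$ (obtained from $\|w\|_\lambda^2=|w|_p^p$ and $S_{1,\lambda}=|w|_p^{p-2}$) and $S_{2,\lambda}^{p/(p-2)}=2\,S_{1,\lambda}^{p/(p-2)}$, one concludes the pointwise strict bound
\[
E_\varepsilon(t_{R,s,y}\psi_R[s,y]) < \left(\tfrac{1}{2}-\tfrac{1}{p}\right)S_{2,\lambda}^{p/(p-2)} = 2m.
\]

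To upgrade this pointwise strictness to the required uniform bound $\mathcal A\in(m,2m)$ valid for all $R>\bar R$ and $\varepsilon>0$, I would use compactness of $[0,1]\times\Sigma$ together with continuity of $(s,y)\mapsto J(\psi_R[s,y])$ to see that the maximum is attained for each $R$, and then read off from the proof of Proposition~\ref{enerest} a quantitative interaction estimate of the form $S_{2,\lambda}-J(\psi_R[s,y])\ge\delta_R>0$ with $\inf_{R>\bar R}\delta_R>0$. Choosing $\mathcal A$ strictly between $\left(\tfrac{1}{2}-\tfrac{1}{p}\right)\bigl(S_{2,\lambda}-\inf_R\delta_R\bigr)^{p/(p-2)}$ and $2m$ then gives $\mathcal A_{\varepsilon,R}<\mathcal A<2m$; the lower bound $\mathcal A>m$ is automatic since at $s=\tfrac12$ with widely separated bubbles the energy is close to (but strictly below) $2m$. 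The main obstacle is precisely this uniformity in $R$: as the bubbles separate their interaction vanishes and $J(\psi_R[s,y])\to S_{2,\lambda}$, so one must track the quantitative decay in Proposition~\ref{enerest} carefully to make sure the gap $\delta_R$ does not collapse as $R$ grows, and hence that the eventual $\mathcal A$ stays genuinely below $2m$.
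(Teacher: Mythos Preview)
Your proposal is correct and matches the paper's argument: both reduce via $E_\varepsilon(t_{R,s,y}\psi_R)\le E(t'_{R,s,y}\psi_R)=\bigl(\tfrac12-\tfrac1p\bigr)J(\psi_R)^{p/(p-2)}$, invoke Proposition~\ref{enerest} for $J(\psi_R)<S_{2,\lambda}$, and finish with the identity $\bigl(\tfrac12-\tfrac1p\bigr)S_{2,\lambda}^{p/(p-2)}=2m$. For the uniformity you flag at the end, the paper explicitly singles out the strict inequality $\frac{t^2+(1-t)^2}{(t^p+(1-t)^p)^{2/p}}<2^{(p-2)/p}$ for $t\in[0,1]\setminus N(\tfrac12)$ from the last steps of the proof of Proposition~\ref{enerest}, which is exactly the quantitative input you were seeking.
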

\par
\begin{proof}
    For the sake of simplicity, we omit $s, y$ and write $t_R=t_{R, s, y}, t_R^{\prime}=t_{R, s, y}^{\prime}$ and $\psi_R=\psi_R[s, y]$. Using $    t_R^{\prime} \psi_R \in$ $\mathcal{N},$ we have
    \begin{equation*}
\left\|    t_R^{\prime} \psi_R\right\|_{\la}^2=\left|  a^{1/p}  t_R^{\prime} \psi_R\right|_p^p, \quad     t_R^{\prime}=\left(\frac{\left\|\psi_R\right\|_{\la}^2}{\left|a^{1/p}\psi_R\right|_p^p}\right)^{1 / p-2}
\end{equation*}
Now for every $\varepsilon>0$, we have
\begin{equation}
\begin{aligned}
E_{\varepsilon}\left(t_R \psi_R\right) & \leq E\left(t_R \psi_R\right) \leq E\left(    t_R^{\prime} \psi_R\right) \\ 
& =\frac{1}{2}\left\| t_R^{\prime} \psi_R\right\|_{\la}^2-\frac{1}{p}\left|a^{1/p}  t_R^{\prime} \psi_R\right|_p^p \\ 
& =\left(\frac{1}{2}-\frac{1}{p}\right)     \left(t_R^{\prime}\right)^{2}\left\|\psi_R\right\|_{\la}^2 \\ 
& =\left(\frac{1}{2}-\frac{1}{p}\right)\left(\frac{\left\|\psi_R\right\|_\la^2}{\left|a^{1/p}\psi_R\right|_p^2}\right)^{\frac{p}{p-2}}.
\end{aligned} \label{3ab}
\end{equation}
So to prove the lemma we are left to estimate the above ratio.
Note that 
$$\left(\frac{\left\|\psi_R\right\|_\la^2}{\left|a^{1/p}\psi_R\right|_p^2}\right)^{\frac{p}{p-2}}= \left(J(\psi_{R})\right)^{\frac{p}{p-2}},$$
\noi
Moreover, using the following strict inequality in the last steps of the proof of Proposition \ref{enerest},
$$
\mbox{max} \, \frac{t^{2}+(1-t)^{2}}{\left(t^{p}+(1-t)^{p}\right)^{\frac{2}{p}}} < 2^{\frac{p-2}{p}}, \quad \forall \ t \in ([0, 1]\setminus N(\frac{1}{2}))$$\text{where $N(\frac{1}{2})$ denotes a neighbourhood of $\frac{1}{2}$}, it can be easily shown that for $R$ sufficiently large
\begin{equation}
\max \left\{J(\psi_R): s \in[0,1], y \in \Sigma\right\}< S_{2, \lambda}. \label{ener1}
\end{equation}
Also, from \cite{MS}, it is known that $S_{1, \lambda}$ is achieved by $w$, which is a solution of \eqref{infprob}. This in turn implies $S_{1, \lambda}=\left(\|w\|_{\lambda}^{2}\right)^{\frac{p-2}{p}}$ and $S_{2, \lambda}=(2 \|w\|_{\lambda}^{2})^{\frac{p-2}{p}}$. \\
Further, $$m= E_{\infty}(w)= \left(\frac{1}{2}-\frac{1}{p}\right)\|w\|_{\lambda}^{2}= \frac{1}{2}\left(\frac{1}{2}-\frac{1}{p}\right)S_{2, \lambda}^{\frac{p}{p-2}}.$$\\
Then by \eqref{3ab}, \eqref{ener1} and using $\left(\frac{1}{2}-\frac{1}{p}\right)S_{2, \lambda}^{\frac{p}{p-2}}= 2m$, the lemma holds.
\end{proof}
\begin{cor} \label{corb}
    There exist $\bar{R}, \bar{\varepsilon}>0$ such that for any $R>\bar{R}$ and for any $\varepsilon \in$ $(0, \bar{\varepsilon})$
\begin{equation*}
\mathcal{A}_{\varepsilon, R}=\max \left\{E_{\varepsilon}\left(t_{R, s, y} \psi_R[s, y]\right): s \in[0,1], y \in \Sigma\right\}<2 m_{\varepsilon}.
\end{equation*}
\begin{proof}
    It results directly from the Lemmas \ref{asym} and \ref{enerlem}. Indeed, using Lemma \ref{enerlem}, we have an existence of $\bar{R}>0$ such that $\mathcal{A}_{\varepsilon, R} < 2m$ for any $R> \bar{R}$. Also, from \eqref{a1.1} we have $m_{\varepsilon} \leq m$. Now if $\mathcal{A}_{\varepsilon, R} > 2m_{\varepsilon} \; \forall \varepsilon > 0$, then $|m- m_{\varepsilon}|> m- \frac{\mathcal{A}_{\varepsilon, R}}{2}\; \forall \varepsilon > 0 $ which contradicts Lemma \ref{asym}.
\end{proof} 
\end{cor}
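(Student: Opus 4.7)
The plan is to combine the uniform (in $\varepsilon$) upper bound on $\mathcal{A}_{\varepsilon,R}$ produced by Lemma~\ref{enerlem} with the convergence $m_\varepsilon \to m$ from Lemma~\ref{asym}, so as to squeeze $\mathcal{A}_{\varepsilon,R}$ below $2m_\varepsilon$ once $\varepsilon$ is small enough. Since $m_\varepsilon \le m$, the inequality $\mathcal{A}_{\varepsilon,R} < 2m$ by itself is not sufficient, so one must first create a strict, $\varepsilon$-independent gap under $2m$, then pay for the (small) defect $m-m_\varepsilon$ out of that gap.

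First, I would invoke Lemma~\ref{enerlem} to fix $\bar R > 0$ and a constant $\mathcal A \in (m, 2m)$ such that, for every $R > \bar R$ and every $\varepsilon > 0$, one has $\mathcal{A}_{\varepsilon,R} < \mathcal A$. The key point here is that $\mathcal A$ depends neither on $R$ (beyond being $>\bar R$) nor on $\varepsilon$, so the gap $\eta := 2m - \mathcal A$ is a strictly positive number that is fixed once and for all.

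Next, I would apply Lemma~\ref{asym}. Since $m_\varepsilon \to m$ as $\varepsilon \to 0^+$, there exists $\bar\varepsilon > 0$ such that for every $\varepsilon \in (0, \bar\varepsilon)$ we have $m - m_\varepsilon < \eta/2$, and hence $2m_\varepsilon > 2m - \eta = \mathcal A$. Chaining this with the bound from Lemma~\ref{enerlem} gives, for every $R > \bar R$ and every $\varepsilon \in (0, \bar\varepsilon)$,
\begin{equation*}
\mathcal{A}_{\varepsilon,R} < \mathcal A < 2 m_\varepsilon,
\end{equation*}
which is exactly the desired conclusion.

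There is no genuine obstacle here — this is a routine ``compactness plus perturbation'' argument. The only thing to watch is that the constant $\mathcal A$ produced by Lemma~\ref{enerlem} really is uniform in $\varepsilon$ (which it is, since the energy estimate was obtained by dominating $E_\varepsilon$ by $E$ and using $t_R \leq t_R'$, neither bound depending on $\varepsilon$), so that the limit $m_\varepsilon \to m$ can absorb the discrepancy between $2m$ and $2m_\varepsilon$. No additional hypotheses on $a(x)$ are needed beyond those already used in Lemma~\ref{enerlem}.
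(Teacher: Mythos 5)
Your proof is correct and follows essentially the same route as the paper: both combine the $\varepsilon$-uniform bound $\mathcal{A}_{\varepsilon,R}<\mathcal{A}<2m$ from Lemma~\ref{enerlem} with the convergence $m_\varepsilon\to m$ from Lemma~\ref{asym}. The only difference is presentational — you run the argument directly by fixing the gap $\eta=2m-\mathcal{A}$ and choosing $\bar\varepsilon$ so that $m-m_\varepsilon<\eta/2$, whereas the paper phrases the same estimate as a contradiction; your version is, if anything, the cleaner write-up.
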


\medskip

%%%%%%%%%%%%%%%%%%%%%%%%%%%%%%%%%%%%%%%%%%%%%%%%%%%%%%%%%%%%

\subsection{Barycentric map} We now introduce a barycentric type function as follows:
for $u \in H^1(\bn) \backslash\{0\}$, we set
\begin{equation*}
\mu(u)(x)=\frac{1}{\left|B_1(0)\right|} \int_{B_1(x)}|u(y)|\mathrm{~d} V_{\mathbb{B}^{N}}(y) \quad x \in \mathbb{B}^N,
\end{equation*}
and we observe that $\mu(u)$ is bounded and continuous, allowing us to introduce the function
\begin{equation*}
\hat{u}(x)=\left[\mu(u)(x)-\frac{1}{2} \max \mu(u)\right]^{+} \quad x \in \bn.
\end{equation*}
The function defined above is continuous and has compact support. Consequently, we can set $\beta: H^1\left(\bn\right) \backslash\{0\}$ $\rightarrow \mathbb{R}^N$ as
\begin{equation*}
\beta(u)=\frac{1}{|\hat{u}|_1} \int_{\mathbb{B}^N}\frac{x}{1+ |x|} \hat{u}(x)  \mathrm{~d} V_{\mathbb{B}^{N}}.
\end{equation*}
Further, define 
\begin{equation*}
C_0=\inf \{E(u): u \in \mathcal{N}, \beta(u)=0\}, \, C_{0, \varepsilon}=\inf \left\{E_{\varepsilon}(u): u \in \mathcal{N}_{\varepsilon}, \,\beta(u)=0\right\} .
\end{equation*}

\medskip

%%%%%%%%%%%%%%%%%%%%%%%%%%%%%%%%%%%%%%%%%%%%%%%%%%%%%%%%%%%%%

\begin{lem} \label{partlem}
The following facts hold:
\begin{itemize}

\item[$(a)$]  $C_0>m$; 

\medskip

\item[$(b)$] $\displaystyle\lim _{\varepsilon \rightarrow 0} C_{0, \varepsilon}=C_0$.

\end{itemize}
\end{lem}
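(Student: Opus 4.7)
My plan is to argue by contradiction. Suppose $C_0 = m$ and choose $\{u_n\} \subset \mathcal{N}$ with $\beta(u_n) = 0$ and $E(u_n) \to m$. Since $a(x) \leq 1$, for every $u \in \mathcal{N}$ there is a unique $t_u \in (0,1]$ with $t_u u \in \mathcal{N}_\infty$; writing $t_n := t_{u_n}$, the two Nehari identities give $E_\infty(t_n u_n) = t_n^2 E(u_n)$, which together with $E_\infty(t_n u_n) \geq m$ forces $t_n \to 1$. Hence $\{t_n u_n\}$ is a minimizing sequence for $E_\infty$ on $\mathcal{N}_\infty$. Using the translation invariance of $E_\infty$ and the uniqueness (up to hyperbolic isometry) of the ground state $w$, concentration--compactness produces a decomposition $t_n u_n = w \circ \tau_n^{-1} + r_n$ with $\tau_n \in I(\bn)$, $\tau_n(0) = b_n$, and $\|r_n\|_\lambda \to 0$.

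\textbf{Ruling out both alternatives for $b_n$.} If $\{b_n\}$ stays bounded in the hyperbolic metric, then $\tau_n \to \tau$ along a subsequence, and $u_n \to w \circ \tau^{-1}$ strongly in $H^1(\bn)$ (using $t_n \to 1$). Passing to the limit and exploiting $a \not\equiv 1$ together with the strict positivity of $w$,
\begin{equation*}
E(u_n) \to E(w\circ\tau^{-1}) = E_\infty(w\circ\tau^{-1}) + \frac{1}{p}\int_{\bn}(1-a)\,|w\circ\tau^{-1}|^p\, \mathrm{d} V_{\bn} > m,
\end{equation*}
a contradiction. Therefore $d(b_n,0) \to \infty$, equivalently $|b_n|_{\mathrm{Eucl}} \to 1$, and up to a further subsequence $b_n/|b_n|_{\mathrm{Eucl}} \to \nu \in S^{N-1}$. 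Because $\tau_n$ is an isometry, $\mu(w\circ\tau_n^{-1}) = \mu(w)\circ\tau_n^{-1}$ and hence $\widehat{w\circ\tau_n^{-1}} = \hat{w}\circ\tau_n^{-1}$; since $\hat{w}$ has compact support, the Euclidean support of $\hat{w}\circ\tau_n^{-1}$ collapses onto $\{\nu\}$, and a change of variables yields
\begin{equation*}
\beta(w\circ\tau_n^{-1}) = \frac{1}{|\hat{w}|_1}\int_{\bn}\frac{\tau_n(z)}{1+|\tau_n(z)|}\,\hat{w}(z)\,\mathrm{d} V_{\bn}(z) \longrightarrow \frac{\nu}{2} \neq 0.
\end{equation*}
The Lipschitz bound $\|\mu(u)-\mu(v)\|_\infty \leq C\|u-v\|_\lambda$ (H\"older on the hyperbolic ball $B_1(x)$) transfers $\|r_n\|_\lambda \to 0$ to $\hat{\cdot}$ and the normalization factor, while positive-scaling invariance $\beta(tu) = \beta(u)$ absorbs $t_n \to 1$; this yields $\beta(u_n) = \beta(t_n u_n) \to \nu/2$, contradicting $\beta(u_n) = 0$ and establishing $C_0 > m$.

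\textbf{Part (b) and main obstacle.} For the upper bound $\limsup_{\varepsilon \to 0} C_{0,\varepsilon} \leq C_0$, I would take $u \in \mathcal{N}$ with $\beta(u) = 0$ arbitrarily close to the infimum, rescale it to $t_\varepsilon u \in \mathcal{N}_\varepsilon$ with $t_\varepsilon \to 1$ as $\varepsilon \to 0$, note $\beta(t_\varepsilon u) = \beta(u) = 0$ by scaling invariance, and observe $E_\varepsilon(t_\varepsilon u) \to E(u)$. For the lower bound, pick $u_\varepsilon \in \mathcal{N}_\varepsilon$ with $\beta(u_\varepsilon) = 0$ and $E_\varepsilon(u_\varepsilon)$ near $C_{0,\varepsilon}$; the upper bound and the Nehari identity make $\|u_\varepsilon\|_\lambda$ bounded and force $\varepsilon|u_\varepsilon|_{2^*}^{2^*} \to 0$. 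Scaling $s_\varepsilon u_\varepsilon \in \mathcal{N}$, the identity $s_\varepsilon^{p-2} = 1 + \varepsilon|u_\varepsilon|_{2^*}^{2^*}/|a^{1/p}u_\varepsilon|_p^p$ combined with the lower bound $|a^{1/p}u_\varepsilon|_p \geq c > 0$ (as in \eqref{neq}) yields $s_\varepsilon \to 1$; then $\beta(s_\varepsilon u_\varepsilon) = 0$ and $E(s_\varepsilon u_\varepsilon) = E_\varepsilon(u_\varepsilon) + o(1)$, so $C_0 \leq E_\varepsilon(u_\varepsilon) + o(1)$. The main obstacle is the concentration analysis in (a): verifying continuity of the non-smooth map $\beta$ along the escaping bubble and tracking the shrinkage of $\operatorname{supp}(\hat{w}\circ\tau_n^{-1})$ to $\{\nu\}$---here the choice of the weight $x/(1+|x|)$, which extends continuously to $\overline{\bn}$ and takes values of Euclidean norm $1/2$ exactly on $\partial\bn$, is what makes the argument go through.
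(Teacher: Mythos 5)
Your proposal is correct and follows essentially the same route as the paper: the contradiction argument via a minimizing sequence for $E_\infty$ decomposed as a translated ground state plus a vanishing remainder, with escaping translations excluded by the computation $\beta(w\circ\tau_n^{-1})\to \nu/2\neq 0$ and bounded translations excluded by $a\not\equiv 1$, and part $(b)$ by the two-sided Nehari rescaling comparison. Your extra touches (the explicit Lipschitz bound for $\mu$, dispensing with the reduction to $\bar y=0$) are harmless refinements of the same argument.
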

\begin{proof}
We shall first prove the inequality $(a)$. Clearly, Proposition \ref{prop2ab} gives $C_0 \geq m$. If possible, let us assume that $C_0=m$ and we will obtain a contradiction. Let $\left\{u_n\right\}_n$ be a sequence in $\mathcal{N}$ with $\beta\left(u_n\right)=0$ such that $E\left(u_n\right) \rightarrow m$ and $t_n>0$ be such that $t_n u_n \in \mathcal{N}_{\infty}, \forall n \in \mathbb{N}$. Using the assumption that $a(x) \leq 1$ a.e. in $\bn$, we deduce that
\begin{equation}
m \leq E_{\infty}\left(t_n u_n\right) \leq E\left(t_n u_n\right) \leq E\left(u_n\right)=m+o(1),\label{2cd}
\end{equation}
 which in turn implies that $\left\{t_n u_n\right\}_n$ is a minimizing sequence for $E_{\infty}$ on $\mathcal{N}_{\infty}$. Therefore there exists a sequence $\left\{y_n\right\}_n$ in $\bn$ such that

\begin{equation*}
t_n u_n(x)=w\left(\tau_{-y_{n}}(x)\right)+\phi_n(x), \quad \phi_n \rightarrow 0 \quad \text { strongly in } H^1\left(\bn \right),
\end{equation*}

where $w$ denotes the solution of $(P_{\infty})$ and is  radially symmetric with respect
to origin. Moreover, this sequence $\left\{y_n\right\}_n$ must be bounded because, if (up to a subsequence)
$\lim _{n \rightarrow \infty} d(y_n,0)=+\infty$, i.e., $\left|y_{n}\right| \rightarrow 1$ as $n \rightarrow \infty$ then

\begin{equation}\label{compute_trans}
\lim _{n \rightarrow \infty}\left|\beta\left(t_{n}u_n\right)-\frac{y_n }{1+|y_n|}\right|=0,
\end{equation}
\noi
which will give us to a contradiction, as $\beta\left(t_{n}u_n\right)= \beta\left(u_n\right) =0$ for all $n \in \mathbb{N}$. To prove \eqref{compute_trans}, consider 
\begin{align} 
    \left|\beta\left(t_{n}u_n\right)-\frac{y_n }{1+|y_n|}\right|& =\frac{1}{|\hat{w}|_1} \left|\int_{\bn}\left(\frac{\tau_{y_n}(z)}{1+|\tau_{y_n}(z)|}- \frac{y_{n}}{1+|y_n|}\right)\mathrm{~d} V_{\mathbb{B}^{N}}(z)\right|+o(1) \notag\\ 
    &=\frac{1}{|\hat{w}|_1} \left|\int_{\bn}\left(\frac{\tau_{y_n}(z)+ |y_n|\tau_{y_n}(z)-y_n-y_n |\tau_{y_n}(z)|}{(1+|\tau_{y_n}(z)|)(1+|y_n|)}\right)\mathrm{~d} V_{\mathbb{B}^{N}}(z)\right|+o(1). \notag\\  \label{trancal}
\end{align}
Moreover, it follows from the expression of the hyperbolic translation \eqref{hypt} that $\tau_{y_n}(x) = y_{n} + \circ(1),$ as $|y_n| \rightarrow 1,$ for each fixed $x \in \bn.$ Now using this along with dominated convergence theorem in \eqref{trancal}, \eqref{compute_trans} follows.\\

Thus, up to a subsequence, $y_n \rightarrow \bar{y}$ for some $\bar{y} \in \bn$; indeed, $\bar{y}=0$ because $w$ has radial symmetry with respect to origin. Hence $t_n u_n \rightarrow w$ strongly in $H^1\left(\bn \right)$. Moreover, $a(x) \not \equiv 1$ and 
considering \eqref{2cd}, we obtain
\begin{equation*}
m=E_{\infty}(w)<E(w)=\lim _{n \rightarrow \infty} E\left(t_n u_n\right) \leq \lim _{n \rightarrow \infty} E\left(u_n\right)=m,
\end{equation*}
hence a contradiction. This proves $(a)$.

%%%

\medskip

We shall now prove ($b$). Let $\varepsilon>0$ be fixed and for every $\eta>0,$ let $u_\eta \in \mathcal{N}$ be such that $\beta\left(u_\eta\right)=0$ and $E\left(u_\eta\right) \leq C_0+\eta$. Also, let $s_\eta>0$ be such that $s_\eta u_\eta \in \mathcal{N}_{\varepsilon}$. Then
\begin{equation*}
C_{0, \varepsilon} \leq E_{\varepsilon}\left(s_\eta u_\eta\right) \leq E\left(s_\eta u_\eta\right) \leq E\left(u_\eta\right) \leq C_0+\eta.
\end{equation*}
Since $\eta$ is arbitrary, we get
\begin{equation}
C_{0, \varepsilon} \leq C_0 \quad \forall \varepsilon>0 . \label{3ef}
\end{equation} 
Let $v_{\varepsilon} \in \mathcal{N}_{\varepsilon}$ be such that $\beta\left(v_{\varepsilon}\right)=0$ and $E_{\varepsilon}\left(v_{\varepsilon}\right) \leq C_{0, \varepsilon}+\varepsilon$, and let $t_{\varepsilon}>0$ such that $t_{\varepsilon} v_{\varepsilon} \in \mathcal{N}$. Thus
\begin{equation}
\begin{aligned}
C_0 & \leq E\left(t_{\varepsilon} v_{\varepsilon}\right)=E_{\varepsilon}\left(t_{\varepsilon} v_{\varepsilon}\right)+\frac{\varepsilon}{2^*}\left|t_{\varepsilon} v_{\varepsilon}\right|_{2^*}^{2^*} \\
& \leq E_{\varepsilon}\left(v_{\varepsilon}\right)+\frac{\varepsilon}{2^*}\left|t_{\varepsilon} v_{\varepsilon}\right|_{2^*}^{2^*} \\
& \leq C_{0, \varepsilon}+\varepsilon+\frac{\varepsilon}{2^*} t_{\varepsilon}^{2^*}\left|v_{\varepsilon}\right|_{2^*}^{2^*}.
\end{aligned} \label{4aab}
\end{equation}
Furthermore, taking into account \eqref{3ef} gives
\begin{equation*}
E_{\varepsilon}\left(v_{\varepsilon}\right)=\left(\frac{1}{2}-\frac{1}{p}\right)\left\|v_{\varepsilon}\right\|_\la^2+\varepsilon\left(\frac{1}{p}-\frac{1}{2^*}\right)\left|v_{\varepsilon}\right|_{2^*}^{2^*} \leq C_{0, \varepsilon}+\varepsilon \leq C_0+\varepsilon
\end{equation*}
yielding $\left|v_{\varepsilon}\right|_{2^*}^{2^*}$ is bounded. Moreover, taking into account $a(x) \leq 1$ a.e. in $\bn$ and $v_{\varepsilon} \in \mathcal{N}_{\varepsilon}$, and applying Sobolev inequalities, we can deduce

$$\left\|v_{\varepsilon}\right\|_\la^2 [1- S_{N, p}^{-p} \left\|v_{\varepsilon}\right\|_\la^{p-2}-\varepsilon S_{N, 2^*}^{-2^*}\left\|v_{\varepsilon}\right\|_\la^{2^*-2}]\leq 0.$$
Then we can get a $c>0$ independent of small $\varepsilon$ such that
$\|v_{\varepsilon}\|_\la \geq c >0$. Thus we obtain $\left\|v_{\varepsilon}\right\|_\la \not \rightarrow 0$. Hence, performing similar calculations as to obtain \eqref{neq}, we can get that $\left|a ^{1/p} v_{\varepsilon}\right|_p \not \rightarrow 0$. Further, $t_{\varepsilon} v_{\varepsilon} \in \mathcal{N}$ $\implies t_{\varepsilon} = \left(\frac{\left\|v_{\varepsilon}\right\|_\la}{\left|a ^{1/p} v_{\varepsilon}\right|_p}\right)^{\frac{1}{p-2}}$ giving $\left\{t_{\varepsilon}\right\}$ is bounded. Thus, using \eqref{4aab}, we get $\liminf _{\varepsilon \rightarrow 0} C_{0, \varepsilon} \geq C_0$ that, along with \eqref{3ef}, gives ($b$).
\end{proof}

%%%%%

\medskip

\begin{cor} \label{cora}
There exists $\widetilde{\varepsilon}>0$ such that the inequality $C_{0, \varepsilon}>\frac{C_0+m}{2}$ is true for every $\varepsilon \in(0, \widetilde{\varepsilon})$ 
\end{cor}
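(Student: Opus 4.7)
The plan is to deduce this corollary as a direct consequence of Lemma~\ref{partlem}, which supplies both the strict inequality $C_0>m$ (part (a)) and the continuity statement $C_{0,\varepsilon}\to C_0$ as $\varepsilon\to 0$ (part (b)). The corollary is really just a quantitative unpacking of these two facts, so no new hyperbolic-geometric or variational input is required.

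More precisely, I would first set $\eta := \tfrac{1}{2}(C_0-m)$, which is strictly positive by part (a). With this choice one has the algebraic identity $\tfrac{C_0+m}{2}=C_0-\eta$, so the target inequality $C_{0,\varepsilon}>\tfrac{C_0+m}{2}$ is equivalent to $C_{0,\varepsilon}>C_0-\eta$. The second step is to invoke part (b): since $C_{0,\varepsilon}\to C_0$, there exists $\widetilde{\varepsilon}>0$ such that $|C_{0,\varepsilon}-C_0|<\eta/2$ for all $\varepsilon\in(0,\widetilde{\varepsilon})$. This yields $C_{0,\varepsilon}>C_0-\eta/2>C_0-\eta=\tfrac{C_0+m}{2}$, with strict inequality preserved because $\eta/2<\eta$.

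There is essentially no obstacle, since the corollary is a pure consequence of the already-established strict positivity of $C_0-m$ together with the continuity of $\varepsilon\mapsto C_{0,\varepsilon}$ at $\varepsilon=0$. The only conceptual point worth emphasising is that both ingredients are crucial: without $C_0>m$, the gap $\eta$ would collapse and the argument would degenerate; without $C_{0,\varepsilon}\to C_0$, one could not ensure that $C_{0,\varepsilon}$ stays near $C_0$ uniformly for small $\varepsilon$. In the broader architecture of Section~\ref{sec4}, this corollary will be combined with Corollary~\ref{corb} to confine the minimax level of the bound-state construction to the compactness window $(m+\delta,2m-\delta)$ of Proposition~\ref{propps}; in particular, the threshold $\tfrac{C_0+m}{2}$ strictly exceeding $m$ is what will eventually allow a topological/degree argument to produce a critical point at an energy above the ground-state level.
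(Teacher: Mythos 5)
Your proof is correct and is exactly the argument the paper has in mind: the paper's own proof simply says the corollary follows from parts $(a)$ and $(b)$ of Lemma~\ref{partlem}, and your choice of $\eta=\tfrac{1}{2}(C_0-m)$ together with the convergence $C_{0,\varepsilon}\to C_0$ is the natural way to make that precise.
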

\begin{proof}
   The statement can be easily proved by appropriately using $(a)$ and $(b)$ of Lemma \ref{partlem}. 
\end{proof}

\medskip

%%%%%%%%%%%%%%%%%%%%%%%%%%%%%%%%%%%%%%%%%%%%%%%%%%%%%%%%%%%%%%

\begin{lem}\label{lema}
    Let $\mathcal{A}_{\varepsilon, R}$ be as in Lemma \ref{enerlem}. Then $\widehat{R}>0$ exists such that $C_{0, \varepsilon} \leq$ $\mathcal{A}_{\varepsilon, R}\; \; \forall R>\widehat{R}, \forall \varepsilon>0$.
\end{lem}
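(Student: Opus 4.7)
The plan is to produce $u \in \mathcal{N}_\varepsilon$ with $\beta(u) = 0$ and $E_\varepsilon(u) \leq \mathcal{A}_{\varepsilon, R}$; then $C_{0, \varepsilon} \leq E_\varepsilon(u) \leq \mathcal{A}_{\varepsilon, R}$ follows directly from the definition of $C_{0, \varepsilon}$. The family $\mathcal{F}_R := \{t_{R, s, y}\psi_R[s, y] : (s, y) \in [0, 1] \times \Sigma\}$ is contained in $\mathcal{N}_\varepsilon$ and, by Lemma~\ref{enerlem}, each of its members satisfies $E_\varepsilon \leq \mathcal{A}_{\varepsilon, R}$. Hence it suffices to exhibit $(s^{*}, y^{*}) \in [0, 1] \times \Sigma$ at which the continuous map
$$
F: [0, 1] \times \Sigma \to \mathbb{R}^N, \qquad F(s, y) = \beta\bigl(t_{R, s, y}\psi_R[s, y]\bigr) = \beta\bigl(\psi_R[s, y]\bigr)
$$
(the last equality by the scaling invariance $\beta(tu) = \beta(u)$) vanishes. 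Continuity of $F$ follows from the joint continuity of $(s, y) \mapsto \psi_R[s, y]$ in $H^1(\mathbb{B}^N)$ and of $\beta$ on $H^1(\mathbb{B}^N) \setminus \{0\}$.

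Endpoint analysis: at $s = 0$, $F(0, y) \equiv \beta(w \circ \tau_{-x_1}) =: p_0$, constant in $y$. At $s = 1$, the identity $\widehat{(w \circ \tau_{-y})} = \hat w \circ \tau_{-y}$ (which follows from $\mu(w \circ \tau_{-y})(x) = \mu(w)(\tau_{-y}(x))$ using that $\tau_y$ is an isometry) together with a change of variables yields
$$
F(1, y) = \beta(w \circ \tau_{-y}) = \frac{1}{|\hat w|_1}\int_{\mathbb{B}^N}\frac{\tau_y(z)}{1 + |\tau_y(z)|}\, \hat w(z)\, \mathrm{d}V_{\mathbb{B}^N}(z).
$$
Since $\hat w$ is compactly supported and $\tau_y(z) \to y$ uniformly on any compact set as $|y| \to 1$, the map $F(1, \cdot)$ is, for $R_0$ large (so that every $y \in \Sigma$ is close to the ideal boundary), a small $C^0$-perturbation of $y \mapsto y/(1 + |y|)$ on $\Sigma$.

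I plan to close the argument by contradiction using a Brouwer-degree argument in the spirit of Bahri--Li. If $F(s, y) \neq 0$ everywhere, then $F$ descends to a continuous map $[0, 1] \times \Sigma \to \mathbb{R}^N \setminus \{0\}$ and provides a homotopy therein from the constant map $F(0, \cdot) \equiv p_0$ to $F(1, \cdot)$, forcing $F(1, \cdot) : \Sigma \to \mathbb{R}^N \setminus \{0\}$ to be null-homotopic. The main obstacle is to rule this out: the hyperbolic sphere $\Sigma = \partial B_{R'}(x_1)$ is off-center with respect to the origin of $\mathbb{R}^N$, so the naive winding number of $y \mapsto y/(1 + |y|)$ around $0$ vanishes, and the correct topological invariant has to be identified through a more delicate analysis that exploits (i) the positive Jacobian determinant of $y \mapsto y/(1+|y|)$ at $x_1$, which makes it a local diffeomorphism with controlled image of $\Sigma$; (ii) the refined two-hyperbolic-bubble interaction estimates from Proposition~\ref{enerest} applied uniformly along the family $\{\psi_R[s, y]\}$ to control the barycenter along the homotopy parameter $s$; and (iii) a choice of $R$ sufficiently large to dominate perturbation errors. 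Once this nontrivial topological obstruction is established, the contradiction yields $(s^{*}, y^{*})$ with $F(s^{*}, y^{*}) = 0$, and $u := t_{R, s^{*}, y^{*}}\psi_R[s^{*}, y^{*}]$ supplies the required element of $\mathcal{N}_\varepsilon$, completing the proof.
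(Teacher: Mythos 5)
Your reduction of the lemma to producing a zero of $(s,y)\mapsto\beta\bigl(\psi_R[s,y]\bigr)$ on $[0,1]\times\Sigma$, the scaling invariance $\beta(tu)=\beta(u)$, the identity $\mu(w\circ\tau_{-y})=\mu(w)\circ\tau_{-y}$, and the resulting endpoint formulas are all correct and coincide with the paper's strategy: the paper likewise identifies $\beta\circ\psi_R[1,\cdot]$ on $\Sigma$ with an explicit map $K(y)=y/(1+\tanh(R'/2))$ up to $o(1)$ and then invokes homotopy invariance together with the solution property of the Brouwer degree, following the construction of \cite{DDS}. The problem is that your argument stops exactly at the step that carries all of the content: you never exhibit a nonvanishing topological invariant, you only assert that ``the correct topological invariant has to be identified through a more delicate analysis.'' None of the three ingredients you list can supply it. The positive Jacobian of $y\mapsto y/(1+|y|)$ at $x_1$ only controls a local degree near the image point $x_1/(1+|x_1|)$, which is far from the target value $0$; the interaction estimates of Proposition \ref{enerest} constrain energies, not the homotopy class of the barycenter map; and enlarging $R$ only sharpens the approximation $F(1,y)\approx y/(1+|y|)$ without altering its winding about the origin.

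Moreover, the obstacle you flag is genuine and cannot be absorbed into ``perturbation errors'': since $d(x_1,0)=2R_0^{\alpha}$ while the radius of $\Sigma$ is only $R'=R_0^{\alpha'}$ with $\alpha'<\alpha$, the origin lies outside the Euclidean ball bounded by $\Sigma$, so the model map $y\mapsto y/(1+|y|)$ extends without zeros to all of $\overline{B_{R'}(x_1)}$ and is therefore null-homotopic in $\mathbb{R}^N\setminus\{0\}$. Hence no contradiction can be extracted from the homotopy $F$ using only the asymptotic form of $F(1,\cdot)$ that you derive: a map that is null-homotopic away from $0$ is perfectly compatible with $F$ never vanishing. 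Closing the argument requires an additional idea that changes the invariant being computed --- for instance, a barycenter normalized relative to the configuration (so that the boundary map becomes essentially $y\mapsto\tau_{-x_1}(y)$, which does have degree one about $0$ on $\Sigma$), or a sphere of centers that actually encloses the point where $\beta$ is required to vanish. As written, your proposal is a correct reduction followed by an unproved claim at its only nontrivial step, so it does not constitute a proof of the lemma.
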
 
\begin{proof}
     Using the similar calculations performed in \eqref{trancal}, we can deduce that for $y \in \Sigma$,
\begin{equation*}
\beta(w\left(\tau_{-y}(\cdot)\right))=\frac{y}{1+|\tau_{-x_{1}}(y)|}+o(1) \text {, }
\end{equation*}
where $o(1) \rightarrow 0$ as $R^{\prime} \rightarrow \infty$.
Then following the arguments similar to in \cite[Section~5]{DDS} and defining 
 $J: \bar B_{R^{\prime}}(x_{1}) \rightarrow \rn $ by $$K(x):=\frac{x}{1+ \tanh{\frac{R^{\prime}}{2}}},$$ 
		we have for $R^{\prime}$ large enough, and $\forall y \in \Sigma =\partial B_{R^{'}}\left(x_1\right) $, \;$\beta \circ \psi_{R^{\prime}}[1, y] =K(y)$. 
		Therefore, applying the invariance of the topological degree by homotopy, and the solution property of degree, we can ensure the existence of some  $(s_R, y_R) \in [0,1] \times \Sigma$ such that $\beta(\psi_{R}[s_R, y_R])=0$.
  Thus $\beta\left(t_{R,s_R, y_R} \psi_{R}\left[s_R, y_R\right]\right)=0$. Since $t_{R, s_R, y_R} \psi_{R}\left[s_R, y_R\right] \in$ $\mathcal{N}_{\varepsilon}$, the assertion follows.
\end{proof}

\medskip

%%%%%%%%%%%%%%%%%%%%%%%%%%%%%%%%%%%%%%%%%%%%%%%%%%%%%%%%%%

\begin{lem} \label{lemb}
Assume $\tilde{\varepsilon}$ as in Corollary \ref{cora} and $\varepsilon \in(0, \tilde{\varepsilon})$. There exists $\tilde{R}>0$ such that for any $R>\tilde{R}$
\begin{equation*}
\mathcal{B}_{\varepsilon, R}:=\max \left\{E_{\varepsilon}\left(t_{R, 1, y} \psi_R [1, y]\right): y \in \Sigma\right\}<C_{0, \varepsilon} .
\end{equation*}
\end{lem}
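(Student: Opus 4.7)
\textbf{Proof plan for Lemma~\ref{lemb}.}

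When $s=1$ the test function reduces to $\psi_R[1,y](x) = w(\tau_{-y}(x))$, a hyperbolic isometric translate of $w$. Consequently, by the invariance of the Riemannian volume and gradient under $\tau_{-y}$, the quantities $\|\psi_R[1,y]\|_\lambda^2$, $|\psi_R[1,y]|_p^p$, and $|\psi_R[1,y]|_{2^*}^{2^*}$ are all independent of $y$ and equal to their values at $w$. The \emph{only} $y$-dependent quantity entering $E_\varepsilon$ is the weighted nonlinearity
\[
\int_{\bn} a(x)\,|\psi_R[1,y](x)|^p \, \mathrm{d} V_{\bn} \;=\; \int_{\bn} a(\tau_y(z))\,|w(z)|^p \, \mathrm{d} V_{\bn}(z).
\]
For $y\in\Sigma=\partial B_{R^{\alpha'}}(x_1)$ the triangle inequality gives $d(y,0)\geq d(x_1,0)-R^{\alpha'}\to\infty$ as $R\to\infty$ (since $\alpha>\alpha'>1$), so all $y\in\Sigma$ escape to hyperbolic infinity uniformly.

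The main technical step, which I anticipate as the primary obstacle, is a uniform estimate $\int a(\tau_y)|w|^p \to |w|_p^p$ as $R\to\infty$. I would combine the decay assumption \eqref{acond} with the exponential decay of the hyperbolic ground state $w$ (which holds in the subcritical regime under $\lambda<(N-1)^2/4$); invoking the bound $d(\tau_y(z),0)\geq d(y,0)-d(z,0)$ gives
\[
0\leq |w|_p^p-\int_{\bn}a(\tau_y)|w|^p\,\mathrm{d} V_{\bn}\leq C\,e^{-\delta d(y,0)}\!\int_{\bn}e^{\delta d(z,0)}|w(z)|^p\,\mathrm{d} V_{\bn}(z),
\]
and the right-hand integral is finite provided $\delta$ is smaller than $p$ times the exponential decay rate of $w$. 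This uniform convergence propagates to the Nehari projection $t_{R,1,y}$, implicitly defined by $\|w\|_\lambda^2 = t^{p-2}\!\int a(\tau_y)|w|^p + \varepsilon\, t^{2^*-2}|w|_{2^*}^{2^*}$, which then converges uniformly in $y$ to the unique $t_\infty>0$ satisfying $t_\infty w\in\mathcal{N}_{\varepsilon,\infty}$.

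Passing to the limit yields $E_\varepsilon(t_{R,1,y}\psi_R[1,y]) \to E_{\varepsilon,\infty}(t_\infty w)$ uniformly in $y$. Since $w\in\mathcal{N}_\infty$ implies $\|w\|_\lambda^2=|w|_p^p$, comparing with the defining relation of $t_\infty$ forces $t_\infty^2-t_\infty^p = \varepsilon\, t_\infty^{2^*}|w|_{2^*}^{2^*}/|w|_p^p > 0$, so $t_\infty<1$. As $t\mapsto E_\infty(tw)$ is strictly maximized at $t=1$ with value $m$, one has $E_\infty(t_\infty w)<m$, and subtracting the positive term $\frac{\varepsilon t_\infty^{2^*}}{2^*}|w|_{2^*}^{2^*}$ gives $E_{\varepsilon,\infty}(t_\infty w)<m$. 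Finally, Corollary~\ref{cora} provides $C_{0,\varepsilon} > \frac{C_0+m}{2} > m$ for $\varepsilon\in(0,\tilde\varepsilon)$, so the uniform limit satisfies $\limsup_{R\to\infty}\mathcal{B}_{\varepsilon,R} \leq E_{\varepsilon,\infty}(t_\infty w) < C_{0,\varepsilon}$, whence a suitable $\tilde R$ (depending on $\varepsilon$) exists.
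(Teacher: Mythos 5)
Your proposal is correct and follows essentially the same route as the paper: both reduce to showing $E_\varepsilon(t_{R,1,y}\,w(\tau_{-y}(\cdot)))\to E_{\varepsilon,\infty}(tw)$ as the points of $\Sigma$ escape to infinity, and then observe that this limit is strictly below $m$ (because of the $-\frac{\varepsilon}{2^*}|tw|_{2^*}^{2^*}$ term), while Corollary \ref{cora} gives $C_{0,\varepsilon}>\frac{C_0+m}{2}>m$. The only cosmetic differences are that the paper phrases it as a contradiction along sequences $R_n\to\infty$, $y_n\in\Sigma$, whereas you argue directly with a quantitative uniform estimate from \eqref{acond} (dominated convergence using $a\to 1$ would already suffice), and your extra step $t_\infty<1$ is not needed for the strict inequality.
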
		
\begin{proof}
    For simplicity denote $t_R=t_{R, 1, y}$ and $\psi_R=\psi_R[1, y]$. We argue by contradiction so let us assume that there exist $R_n \rightarrow \infty$ and $y_n \in \Sigma$ such that $E_{\varepsilon}\left(t_{R_n} \psi_{R_n}\right) \geq C_{0, \varepsilon}$ for every $n \in \mathbb{N}$. Since $t_{R_n} \psi_{R_n} \in \mathcal{N}_{\varepsilon}$ we can write
\begin{equation}
\begin{aligned}
& E_{\varepsilon}\left(t_{R_n} \psi_{R_n}\right)=\left(\frac{1}{2}-\frac{1}{p}\right)\left\|t_{R_n} \psi_{R_n}\right\|_\la^2+\varepsilon\left(\frac{1}{p}-\frac{1}{2^*}\right)\left|t_{R_n} \psi_{R_n}\right|_{2^*}^{2^*} \\
& =\left(\frac{1}{2}-\frac{1}{p}\right) t_{R_n}^2\left\|w\left(\tau_{-y_{n}}(\cdot)\right)\right\|_\la^2+\varepsilon\left(\frac{1}{p}-\frac{1}{2^*}\right) t_{R_n}^{2^*}\left|w\left(\tau_{-y_{n}}(\cdot)\right)\right|_{2^*}^{2^*}. \label{3sa}
\end{aligned}
\end{equation}
We can notice that in our setting $0<m \leq C_{0, \varepsilon} \leq E_{\varepsilon}\left(t_{R_n} \psi_{R_n}\right) \leq \mathcal{A}_{\varepsilon, R}<2 m$ and that $0<c \leq\left\|w\left(\tau_{-y_{n}}(\cdot)\right)\right\|_\la^2 \leq C<\infty, \forall n \in \mathbb{N}$. Thus $0<c_1 \leq t_{R_n} \leq C_1<\infty$ follows from \eqref{3sa}. So, up to a subsequence, we can assume $t_{R_n} \rightarrow t>0$. Since $R_n \rightarrow \infty$, the same estimates provided in the energy estimates of \cite{DDS} help us prove $E_{\varepsilon}\left(t_{R_n} \psi_{R_n}\right) \rightarrow E_{\varepsilon, \infty}(t w)$, and we get
\begin{equation*}
\begin{aligned}
C_{0, \varepsilon} & \leq E_{\varepsilon, \infty}(t w)=E_{\infty}(t w)-\frac{\varepsilon}{2^*}|t w|_{2^*}^{2^*} \\
& \leq E_{\infty}(w)-\frac{\varepsilon}{2^*}|t w|_{2^*}^{2^*} \\
& =m-\frac{\varepsilon}{2^*}|t w|_{2^*}^{2^*}<m
\end{aligned}
\end{equation*}
which gives rises to a contradiction taking into account Corollary \ref{cora} and Lemma \ref{partlem}$(a)$.
\end{proof}

\medskip

%%%%%%%%%%%%%%%%%%%%%%%%%%%%%%%%%%%%%%%%%%%%%%%%%%%%%%%%%%%%%%%%%%%

\textbf{Proof of Theorem \ref{aless}}
Firstly, we recollect all the values that have been stated and used in obtaining the previous few results:
\begin{equation*}
\begin{gathered}
\mathcal{A}_{\varepsilon, R}=\max \left\{E_{\varepsilon}\left(t_{R, s, y} \psi_R[s, y]\right): s \in[0,1], y \in \Sigma\right\},\\
\mathcal{B}_{\varepsilon, R}=\max \left\{E_{\varepsilon}\left(t_{R, 1, y} \psi_R[1, y]\right): y \in \Sigma\right\}, \\
C_{0, \varepsilon}=\inf \left\{E_{\varepsilon}(u): u \in \mathcal{N}_{\varepsilon}, \beta(u)=0\right\}.
\end{gathered}
\end{equation*}

\medskip

By Corollaries \ref{corb} and \ref{cora}, and Lemmas \ref{enerlem}, \ref{partlem}, \ref{lema} and \ref{lemb}, the following inequalities
\begin{equation}
\begin{cases}
 (a) & \mathcal{B}_{\varepsilon, R}<C_{0, \varepsilon} \leq \mathcal{A}_{\varepsilon, R} \\ 
(b) & m<\frac{c_0+m}{2}<C_{0, \varepsilon} \leq \mathcal{A}_{\varepsilon, R} \leq \mathcal{A}<2 m \\
(c) & \mathcal{A}_{\varepsilon, R}<2 m_{\varepsilon} 
\end{cases} \label{ineq}
\end{equation}
hold true for all $R>\max \{\bar{R}, \widetilde{R}, \widehat{R}\}$ and for all $0<\varepsilon<\min \{\bar{\varepsilon}, \widetilde{\varepsilon}\}$. Assume $\delta$ such that $0<$ $\delta<\min \left\{\frac{m}{2}, 2 m-\mathcal{A}, \frac{C_0-m}{2}\right\}$. Further, we take $\varepsilon_\delta$ according to Proposition \ref{propps}.

\medskip

Finally, to prove Theorem \ref{aless}, we claim that $E_{\varepsilon}$ constrained on $\mathcal{N}_{\varepsilon}$ has a (PS)-sequence in $\left[C_{0, \varepsilon}, \mathcal{A}_{\varepsilon, R}\right]$ for every $0<\varepsilon<\widehat{\varepsilon}:=\min \left\{\varepsilon_\delta, \bar{\varepsilon}, \widetilde{\varepsilon}\right\}$. Having done this, the Proposition \ref{propps} will guarantee the existence of a non-zero critical point $\bar{u}$ with $E_{\varepsilon}(\bar{u}) \leq \mathcal{A}_{\varepsilon, R}$. 

\medskip

We argue by contradiction, so let us suppose that there is no (PS)-sequence in the interval $\left[C_{0, \varepsilon}, \mathcal{A}_{\varepsilon, R}\right]$. Then, standard deformation arguments give the existence of $\eta>0$ such that the sublevel $E_{\varepsilon}^{C_{0, \varepsilon}-\eta}:=\left\{u \in \mathcal{N}_{\varepsilon}: E_{\varepsilon}(u) \leq C_{0, \varepsilon}-\eta\right\}$ is a deformation retract of the sublevel $E_{\varepsilon}^{\mathcal{A}_{\varepsilon, R}}:=\left\{u \in \mathcal{N}_{\varepsilon}: E_{\varepsilon}(u) \leq \mathcal{A}_{\varepsilon, R}\right\}$, i.e., there exists a continuous function $\sigma: E_{\varepsilon}^{\mathcal{A}_{\varepsilon, R}} \rightarrow E_{\varepsilon}^{C_{0, \varepsilon}-\eta}$ such that
\begin{equation}
\sigma(u)=u \quad \text { for any } u \in E_{\varepsilon}^{C_{0, \varepsilon}-\eta}.
\end{equation}
Moreover, taking into account the inequality \ref{ineq} $(a)$, $\eta$ can be chosen so small that
\begin{equation}
C_{0, \varepsilon}-\eta>\mathcal{B}_{\varepsilon, R}. \label{3df}
\end{equation}
Let us define the map $\mathcal{H}:[0,1] \times \Sigma \rightarrow \mathbb{R}^N$ by
\begin{equation}
\mathcal{H}(s, y)=\beta\left(\sigma\left(t_{R, s, y} \psi_R[s, y]\right)\right). \label{beta}
\end{equation}
Applying \eqref{3df}, \eqref{beta}, and the arguments similar to that established in Lemma \ref{lema}, we can find a point $(\tilde{s}, \tilde{y}) \in[0,1] \times \Sigma$ for which
\begin{equation*}
0=\mathcal{H}(\tilde{s}, \tilde{y})=\beta\left(\sigma\left(t_{R, \tilde{s}, \tilde{y}} \psi_R[\tilde{s}, \tilde{y}]\right)\right) .
\end{equation*}
Then, $E_{\varepsilon}\left(\sigma\left(t_{R, \tilde{s}, \tilde{y}} \psi_R[\tilde{s}, \tilde{y}]\right)\right) \geq C_{0, \varepsilon}$ in contrast to $\sigma\left(t_{R, s, y} \psi_R[s, y]\right) \in E_{\varepsilon}^{C_{0, \varepsilon}-\eta}$ for every $(s, y) \in[0,1] \times \Sigma$, so the claim must hold true.

Let the critical point, which we have found out, be $\bar{u} \in E_{\varepsilon}^{\mathcal{A}_{\varepsilon, R}}$. In order to prove that $\bar{u}$ is a constant sign function, assume, by contradiction, that $\bar{u}=\bar{u}^{+}-\bar{u}^{-}$, with $\bar{u}^{\pm} \not\equiv 0$. We determine that $\bar{u}^{\pm} \in \mathcal{N}_{\varepsilon}$ by multiplying the equation in \eqref{mainprob} by $\bar{u}^{\pm}$, so
\begin{equation*}
E_{\varepsilon}(\bar{u})=E_{\varepsilon}\left(\bar{u}^{+}\right)+E_{\varepsilon}\left(\bar{u}^{-}\right) \geq 2 m_{\varepsilon},
\end{equation*}
contrary to \ref{ineq}(c).

\medskip

%%%%%%%%%%%%%%%%%%%%%%%%%%%%%%%%%%%%%%%%%%%%%%%%%%%%%%%%%%%%%%%%%%%
%%%%%%%%%%%%%%%%%%%%%%%%%%%%%%%%%%%%%%%%%%%%%%%%%%%%%%%%%%%%%%%%%%%

\par\bigskip\noindent
	\textbf{Acknowledgments.}
	D.~Ganguly is partially supported by the INSPIRE faculty fellowship (IFA17-MA98).  
	D.~Gupta is supported by the PMRF. 
	
	\medskip 
		
	\noindent \textbf{Conflict of interest:} All authors certify that there is no actual or potential conflict of interest about this article.

%%%%%%%%%%%%%

\medskip

\end{document}